\newcommand{\referenza}{}
\newtheorem{prop}{Proposition}[section]
\newtheorem*{prop*}{Proposition \referenza}
\newtheorem{thm}[prop]{Theorem}
\newtheorem*{thm*}{Theorem \referenza}
\newtheorem{cor}[prop]{Corollary}
\newtheorem*{cor*}{Corollary \referenza}
\newtheorem{lemma}[prop]{Lemma}
\theoremstyle{definition}
\newtheorem{defi}[prop]{Definition}
\newtheorem{rem}[prop]{Remark}
\newtheorem{ex}[prop]{Example}
\newtheorem{question}[prop]{Question}
\newtheorem*{question*}{Question \referenza}
\newcommand{\N}{\mathbb{N}}
\newcommand{\Z}{\mathbb{Z}}
\newcommand{\R}{\mathbb{R}}
\newcommand{\C}{\mathbb{C}}
\newcommand{\sspace}{\text{\--}}
\newcommand{\ssspace}{\text{\textdblhyphen}}
\newcommand{\g}{\mathfrak{g}}
\newcommand{\heis}{\mathfrak{heis}}
\newcommand{\Sf}{\mathbb{S}}
\newcommand{\caso}[1]{\smallskip \noindent \texttt{{#1}}. }
\DeclareMathOperator{\im}{i}
\DeclareMathOperator{\imm}{im}
\DeclareMathOperator{\de}{d}
\DeclareMathOperator{\esp}{e}
\newcommand{\del}{\partial}
\newcommand{\delbar}{\overline{\del}}
\newcommand{\lcK}{lcK}
\newcommand{\gcK}{gcK}
\newcommand{\lcb}{lcb}
\newcommand{\gcb}{gcb}
\newcommand{\uG}{$1$G}
\newcommand{\kG}{$k$G}
\newcommand{\lcht}{lcht}
\newcommand{\gcht}{gcht}
\newcommand{\lcs}{lcs}
\newcommand{\gcs}{gcs}
\renewcommand{\Im}{\mathsf{Im}}
\renewcommand{\Re}{\mathsf{Re}}
\title{Locally conformal Hermitian metrics on complex non-K\"ahler manifolds}
\author{Daniele Angella}
\address[Daniele Angella]{Istituto Nazionale di Alta Matematica\\
at Departamento de Matem\'aticas\\
Universidad de Zaragoza\\
Edificio de Matem\'aticas\\
c/ Pedro Cerbuna 12, 50009\\
Zaragoza, Spain}
\curraddr{Centro di Ricerca Matematica ``Ennio de Giorgi''\\
Collegio Puteano, Scuola Normale Superiore\\
Piazza dei Cavalieri, 3\\
56100 Pisa, Italy\\
}
\email{daniele.angella@gmail.com}
\email{daniele.angella@sns.it}
\author{Luis Ugarte}
\address[Luis Ugarte]{Departamento de Matem\'aticas-I.U.M.A.\\
Universidad de Zaragoza\\
Campus Plaza San Francisco\\
50009, Zaragoza, Spain}
\email{ugarte@unizar.es}
\keywords{Complex manifold, locally conformal K\"ahler, balanced metric, locally conformal balanced, holomorphic-tamed, $\partial\overline{\partial}$-Lemma, nilmanifold, solvmanifold}
\thanks{The first author is granted with a research fellowship by Istituto Nazionale di Alta Matematica INdAM, and is supported by the Project PRIN ``Variet\`a reali e complesse: geometria, topologia e analisi armonica'', by the Project FIRB ``Geometria Differenziale e Teoria Geometrica delle Funzioni'', by SNS GR project ``Geometry of non-K\"ahler manifolds'', and by GNSAGA of INdAM.
The second author is supported by Projects MINECO (Spain) MTM2011-28326-C02-01 and MTM2014-58616-P.\\[10pt]
To appear in {\em Mediterranean Journal of Mathematics}, DOI: 10.1007/s00009-015-0586-3.
The final publication is available at \url{www.springerlink.com}
}
\subjclass[2010]{32Q99, 53C55, 53C30}
\begin{document}

\begin{abstract}
 We study complex non-K\"ahler manifolds with Hermitian metrics being locally conformal to metrics with special cohomological properties. In particular, we provide examples where the existence of locally conformal holomorphic-tamed structures implies the existence of locally conformal K\"ahler metrics, too.
\end{abstract}

\maketitle

\section*{Introduction}
\noindent A central problem in Geometry is the search of the (notion of) ``best'' metric. In K\"ahler geometry, one is led to search K\"ahler metrics with special curvature properties. In this direction, the celebrated theorem by S.-T. Yau, \cite{yau}, solving the Calabi conjecture, is one foundational example. But not every complex manifold admits a K\"ahler metric.
Therefore, in complex {\em non-K\"ahler geometry}, one has the further problem of restricting the class of Hermitian metrics to a suitable sub-class. Such sub-classes are usually characterized by {\em cohomological properties} of their associated form. For example, on a complex manifold $X$ of dimension $n$, the Hermitian metric associated to the $(1,1)$-form $\omega$ is called {\em balanced} if $\de\omega^{n-1}=0$ \cite{michelsohn}, {\em pluriclosed} if $\del\delbar\omega=0$ \cite{bismut}, {\em astheno-K\"ahler} if $\del\delbar\omega^{n-2}=0$ \cite{jost-yau}, {\em Gauduchon} if $\del\delbar\omega^{n-1}=0$ \cite{gauduchon}, and more generally {\em $k$-Gauduchon} for $k\in\{1, \ldots, n-1\}$ if $\del\delbar\omega^k\wedge\omega^{n-k-1}=0$ \cite{fu-wang-wu}.

Metrics being locally conformal to metrics with some special cohomological properties have arisen interest, too. A first reason is that, even if not every compact complex surface admits a K\"ahler metric (this depends on the parity of the first Betti number), many of them admit a metric being locally conformal to a K\"ahler metric \cite{belgun}. A second reason is that one of the equations in the Strominger system in heterotic string theory \cite{strominger} requires the existence of a metric being conformal to a balanced metric. As a third reason, P. Gauduchon proved in \cite[Th\'{e}or\`{e}me 1]{gauduchon} that every Hermitian metric on a compact complex manifold has a standard representative (called Gauduchon metric) in its conformal class.

\medskip

In this note, we study complex (non-K\"ahler) manifolds with Hermitian metrics being locally conformal to metrics with special cohomological properties. In particular, we focus on {\em locally conformal holomorphic-tamed} (also called {\em locally conformal Hermitian symplectic}) structures, providing results and examples for which the existence of these structures assures the existence of a {\em locally conformal K\"ahler}
structure. This happens for example for $6$-dimensional nilmanifolds endowed with a left-invariant complex structure, (see Theorem \ref{thm:6-nilmfd-lcht--lck},) while the Inoue surface $\mathcal{S}^{\pm}$ provides a counterexample, (see \cite[Theorem 7]{belgun} and \cite[Theorem 1.1]{apostolov-dloussky}). A related question has been formulated in \cite[Problem 1.3]{apostolov-dloussky}.

\smallskip

In Section \ref{sec:lck}, we consider locally conformal K\"ahler structures, see \cite{dragomir-ornea} and the references therein.
By \cite[Theorem 2.1, Remark (1)]{vaisman}, locally conformal K\"ahler metrics on compact complex manifolds satisfying
the {\em $\partial\overline{\partial}$-Lemma} are in fact globally conformal to a K\"ahler metric.
Hence, the deformations of the {\em holomorphically parallelizable Nakamura manifold} \cite{nakamura}
investigated in \cite{angella-kasuya-2} do not admit any locally conformal K\"ahler structure,
see Example \ref{ex:def-1-nakamura}.
They are not even in {\em class $\mathcal{C}$ of Fujiki}, see \cite[Remark 6.3]{angella-kasuya-2}: this is in support to \cite[Standard Conjecture 1.17]{popovici-annsns}, compare also \cite[Question 1.5]{popovici-inventiones}.
Finally, on a compact complex manifold with a locally conformal K\"ahler structure $\omega$, we consider the commutation relations between the naturally associated twisted differential
operators and the operators associated to the pointly linear symplectic structure, providing a sort of K\"ahler identities for compact locally conformal K\"ahler manifolds, see Proposition \ref{prop:lck-kahler-identities}, see also \cite{vanle-vanzura}.

\smallskip

In Section \ref{sec:lcb}, we consider metrics being {\em locally conformal to a balanced metric}. We prove that, as in the K\"ahler case, the property of the $\partial\overline{\partial}$-Lemma on a compact complex manifold makes any locally conformal balanced metric to be in fact globally conformal balanced. In fact, the following stronger result holds.

\renewcommand{\referenza}{\ref{thm:deldelbar-lcb--gcb}}
\begin{thm*}
 Let $X$ be a $2n$-dimensional compact manifold endowed with a complex structure $J$ such that the natural map $H^{n-1,n}_{BC}(X) \to H^{n-1,n}_{\delbar}(X)$ induced by the identity is injective. Then any locally conformal balanced
 structure is also globally conformal balanced.
\end{thm*}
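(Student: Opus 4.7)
The plan is to first reduce to the Gauduchon representative in the conformal class, then invoke the injectivity hypothesis to write $\delbar\omega^{n-1}$ as a $\del\delbar$-image, and finally combine Stokes' theorem with a pointwise positivity identity to force the Lee form to vanish.

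\textbf{Step 1 (Reduction to the Gauduchon gauge).} By Gauduchon's theorem \cite{gauduchon}, the conformal class of $\omega$ contains a representative (unique up to positive scalar) for which $\del\delbar\omega^{n-1}=0$. A conformal change $\omega\mapsto e^{f}\omega$ shifts the Lee form by the exact $1$-form $(n-1)\de f$, so the de Rham class $[\theta]$ is preserved, and we may henceforth assume $\omega$ is Gauduchon. Splitting $\de\omega^{n-1}=\theta\wedge\omega^{n-1}$ by bidegree gives $\delbar\omega^{n-1}=\theta^{0,1}\wedge\omega^{n-1}$; this form is tautologically $\delbar$-exact, $\delbar$-closed (by bidegree, since $\Omega^{n-1,n+1}(X)=0$), and $\del$-closed (by the Gauduchon condition), hence $\de$-closed. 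It therefore represents a class in $H^{n-1,n}_{BC}(X)$ mapping to zero in $H^{n-1,n}_{\delbar}(X)$, and the injectivity hypothesis produces $\gamma\in\Omega^{n-2,n-1}(X)$ with $\delbar\omega^{n-1}=\del\delbar\gamma$.

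\textbf{Step 2 (Stokes, then positivity).} Wedging this identity with $\theta^{1,0}$ and integrating: from $\de\theta=0$ we extract $\del\theta^{1,0}=0$, and $\delbar\gamma\wedge\delbar\theta^{1,0}\in\Omega^{n-1,n+1}(X)=0$, so
\begin{equation*}
\de(\delbar\gamma\wedge\theta^{1,0})=\del\delbar\gamma\wedge\theta^{1,0},
\end{equation*}
whence Stokes yields $\int_{X}\del\delbar\gamma\wedge\theta^{1,0}=0$. Combining with $\delbar\omega^{n-1}=\theta^{0,1}\wedge\omega^{n-1}$,
\begin{equation*}
0=\int_{X}\theta^{0,1}\wedge\theta^{1,0}\wedge\omega^{n-1}.
\end{equation*}
Since $i\theta^{1,0}\wedge\theta^{0,1}$ is the standard rank-one non-negative $(1,1)$-form attached to the $(1,0)$-part of $\theta$, a direct pointwise calculation identifies the integrand as $\frac{2i}{n}|\theta|_{\omega}^{2}\,\omega^{n}$. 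Hence $\int_{X}|\theta|_{\omega}^{2}\,\omega^{n}=0$, which forces $\theta\equiv 0$ on $X$. The Lee form of the Gauduchon representative thus vanishes, and by Step 1 the original Lee form is exact, so the structure is globally conformal balanced.

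\textbf{Main obstacle.} The critical use of the injectivity hypothesis is the replacement of $\delbar\omega^{n-1}$ by $\del\delbar\gamma$: without it, integrating by parts against $\theta^{1,0}$ produces only a tautological identity. Once this replacement is available, the combination of $\del\theta^{1,0}=0$ and the bidegree vanishing $\Omega^{n-1,n+1}(X)=0$ ensures that all error terms disappear in Stokes' theorem, and the $(1,1)$-positivity of $i\theta^{1,0}\wedge\theta^{0,1}$ then completes the argument.
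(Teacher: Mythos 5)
Your argument is correct and follows essentially the same route as the paper's proof: pass to the Gauduchon representative, use the injectivity of $H^{n-1,n}_{BC}(X)\to H^{n-1,n}_{\delbar}(X)$ to write $\delbar\omega^{n-1}$ as a $\del\delbar$-image, then wedge with $\vartheta^{1,0}$, apply Stokes, and conclude from the pointwise non-negativity of $\im\,\vartheta^{1,0}\wedge\overline{\vartheta^{1,0}}\wedge\omega^{n-1}$ relative to $\omega^n$. The only discrepancies are cosmetic (the normalization constant in your pointwise identity is off, but the non-negativity and the vanishing criterion, which are all that is used, are right).
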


Here
$H^{\bullet,\bullet}_{BC}(X):=\frac{\ker\del\cap\ker\delbar}{\imm\del\delbar}$
and $H^{\bullet,\bullet}_{\delbar}(X):=\frac{\ker\delbar}{\imm\delbar}$
denote, respectively, the {\em Bott-Chern cohomology} and the {\em Dolbeault cohomology} of $X$.
In particular, the problem of the existence of balanced metrics on compact complex manifolds satisfying the $\partial\overline{\partial}$-Lemma is reduced to the locally conformal level. However, to our knowledge there are no known examples of
compact complex manifolds satisfying the $\del\delbar$-Lemma and not admitting a locally conformal balanced metric.

Note that the map $H^{n-1,n}_{BC}(X) \to H^{n-1,n}_{\delbar}(X)$ being injective assures that $X$ satisfies the $(n-1,n)$-th weak $\del\delbar$-Lemma, as defined by J. Fu and S.-T. Yau in \cite{fu-yau}, but this weaker hypothesis does not suffice in Theorem \ref{thm:deldelbar-lcb--gcb},
see Proposition~\ref{prop:Jabel-weak-lcb}.

Once again in view of the study of special metrics on compact complex manifolds satisfying the $\partial\overline{\partial}$-Lemma, we study locally conformal balanced structures in connection with $k$-Gauduchon metrics, \cite{fu-wang-wu}, which provide a generalization of the notion of Gauduchon metrics, pluriclosed metrics, and astheno-K\"ahler metrics; see Proposition \ref{prop:deldelbar-ug-lcb--gck}.

\smallskip

In Section \ref{sec:lcht}, we address, at the locally conformal level, a question by T.-J. Li and W. Zhang \cite[page 678]{li-zhang} and by J. Streets and G. Tian \cite[Question 1.7]{streets-tian}. More precisely, the {\em ``tamed to compatible'' question} by S.~K. Donaldson \cite[Question 2]{donaldson} asks whether, on a compact almost-complex $4$-manifold, if there exists a taming symplectic structure
(that is, a symplectic structure being positive on the complex lines), then there exists also a compatible symplectic structure (that is, a taming symplectic structure being invariant with respect to the almost-complex structure). The analogous question for non-integrable almost-complex manifolds of dimension higher than $4$ has a negative answer, as follows from \cite{migliorini-tomassini, tomassini}. On the other hand, in the integrable case, no example of compact complex non-K\"ahler manifolds admitting a taming symplectic structure is yet known, \cite[page 678]{li-zhang}, \cite[Question 1.7]{streets-tian}. They are called non-K\"ahler {\em holomorphic-tamed} manifolds. In \cite[Theorem 3.3]{angella-tomassini-1}, it is proven that no $6$-dimensional nilmanifold endowed with a left-invariant complex structure admits a holomorphic-tamed structure, except for the torus. Such result has been generalized to the higher-dimensional case in \cite[Theorem 1.3]{enrietti-fino-vezzoni} (see also \cite{fino-kasuya}.)

Analogously, in the locally conformal setting, one can ask the following.

\renewcommand{\referenza}{\ref{conj:nilmfd-lcht--lck}}
\begin{question*}
For which compact complex manifolds, the existence of locally conformal holomorphic-tamed structures is equivalent to the existence of locally conformal K\"ahler structures?
\end{question*}

As shown in Theorem \ref{thm:4solvmfds-lcht}, and also \cite[Theorem 7]{belgun} and \cite[Theorem 1.1]{apostolov-dloussky}, the answer to Question \ref{conj:nilmfd-lcht--lck} is not always positive. Indeed, the Inoue surface $S^+_{n;p,q,r;t}=\left.\Gamma\middle\backslash{\mathrm{Sol}^\prime}^4_1\right.$ with $t\in\C\setminus\R$, (which corresponds to $\mathcal{S}^\pm$ with $q=-1$ in the notations of \cite{hasegawa-jsg}, see Table \ref{table:eq-struttura-4-solvmfds},) does not admit any locally conformal K\"ahler structure by \cite[Theorem 7]{belgun}. On the other hand, it admits locally conformal holomorphic-tamed structures. More precisely, during the preparation of this paper, a big progress on Question \ref{conj:nilmfd-lcht--lck} has been announced by V. Apostolov and G. Dloussky in the case of compact complex surfaces: in fact, they proved in \cite[Theorem 1.1]{apostolov-dloussky} that any compact complex surface with odd first Betti number admits a locally conformal holomorphic-tamed structure. In particular, Theorem \ref{thm:4solvmfds-lcht} can now be seen as a consequence of their result and of \cite[Theorem 7]{belgun}.

The problem of studying {\em locally conformal symplectic} structures which are not locally conformal K\"ahler is investigated also by G. Bazzoni and J.~C. Marrero, who provided in \cite{bazzoni-marrero} an example of a $4$-dimensional nilmanifold being locally conformal symplectic
and admitting no locally conformal K\"ahler metrics.

A class of manifolds for which Question \ref{conj:nilmfd-lcht--lck} has a positive answer is provided by $6$-dimensional nilmanifolds.

\renewcommand{\referenza}{\ref{thm:6-nilmfd-lcht--lck}}
\begin{thm*}
 Let $X$ be a $6$-dimensional nilmanifold endowed with a left-invariant complex structure. If $X$ admits a
 locally conformal holomorphic-tamed
 structure,
 then it admits also a locally conformal K\"ahler
 structure. In particular, either it is diffeomorphic to a torus,
 or to a compact quotient of $H(5) \times \mathbb{R}$, where $H(5)$
 is the five-dimensional Heisenberg group.
\end{thm*}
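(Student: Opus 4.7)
The plan has three stages: reduce to invariant structures by a symmetrization argument, decompose the resulting equation by bidegree on the Lie algebra, and conclude by the classification of $6$-dimensional nilpotent Lie algebras admitting an invariant complex structure.

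\emph{Stage 1 (symmetrization).} Let $(\Omega,\theta)$ be a locally conformal holomorphic-tamed structure on $X$, with closed Lee $1$-form $\theta$ satisfying $d\Omega=\theta\wedge\Omega$ and with $\Omega^{1,1}$ positive. By Nomizu's theorem, the class $[\theta]\in H^1_{dR}(X;\R)$ is represented by a left-invariant closed $1$-form $\theta_0$, so $\theta=\theta_0+df$ for some $f\in C^\infty(X;\R)$. Then $\tilde\Omega:=\esp^{-f}\Omega$ is locally conformal holomorphic-tamed with invariant Lee form $\theta_0$, since $d\tilde\Omega=\theta_0\wedge\tilde\Omega$. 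Averaging $\tilde\Omega$ with respect to a bi-invariant Haar measure on the nilmanifold (a Belgun-type symmetrization, cf.\ \cite{belgun} and \cite[Theorem 3.3]{angella-tomassini-1}) yields an invariant $2$-form $\Omega_0$ still satisfying $d\Omega_0=\theta_0\wedge\Omega_0$ (the equation is linear and $\theta_0$ is already invariant), and with $\Omega_0^{1,1}$ still positive, since positivity is preserved under averaging of Hermitian $(1,1)$-forms.

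\emph{Stage 2 (type decomposition on the Lie algebra).} Let $\g$ denote the underlying $6$-dimensional nilpotent Lie algebra. Decompose $\Omega_0=\alpha+\omega+\bar\alpha$ with $\alpha\in\Lambda^{2,0}\duale{\g}$ and $\omega=\Omega_0^{1,1}>0$, and $\theta_0=\eta+\bar\eta$ with $\eta\in\Lambda^{1,0}\duale{\g}$. The equation $d\Omega_0=\theta_0\wedge\Omega_0$ decomposes by bidegree into the $(3,0)$-equation $\del\alpha=\eta\wedge\alpha$ and the $(2,1)$-equation $\delbar\alpha+\del\omega=\bar\eta\wedge\alpha+\eta\wedge\omega$, together with their complex conjugates; combined with $d\theta_0=0$ and $\omega^3\neq 0$, this is a finite-dimensional linear system on $\duale{\g}$.

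\emph{Stage 3 (classification argument).} Running through the classification of $6$-dimensional nilpotent Lie algebras admitting an invariant complex structure, one checks case by case -- via the Chevalley-Eilenberg structure equations and the positivity of $\omega$ -- that the system of Stage~2 is solvable only when $\g$ is abelian or isomorphic to the Lie algebra of $H(5)\times\R$. In both remaining cases one then verifies that either $\omega$ alone already satisfies $d\omega=\theta_0\wedge\omega$, or $\alpha$ can be killed by a suitable closed correction, producing an invariant locally conformal K\"ahler structure with Lee form $\theta_0$. Mal'cev rigidity then yields the stated diffeomorphism classification.

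The main obstacle is Stage~3, which requires eliminating case by case every entry of the classification list outside the abelian and Heisenberg-type Lie algebras, using the positivity of $\omega$ against the bidegree equations. This is the locally conformal analogue of the holomorphic-tamed rigidity argument of \cite[Theorem 3.3]{angella-tomassini-1}, paralleling (at the conformal level) the higher-dimensional extensions in \cite{enrietti-fino-vezzoni, fino-kasuya}.
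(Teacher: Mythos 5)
Your Stages 1--2 are sound and coincide with what the paper does in Lemma \ref{lemma:nilmfd-lcht--inv-lcht}: Nomizu's theorem lets you replace the Lee form by an invariant representative after a global conformal change, and the Belgun symmetrization (using $\mu(\eta\wedge\alpha)=\eta\wedge\mu(\alpha)$ for invariant $\eta$) produces an invariant \lcht\ form with the same invariant Lee form. The problem is Stage 3, which is where the entire content of the theorem lives and which you only assert. ``Running through the classification \dots one checks case by case that the system is solvable only when $\g$ is abelian or $\heis_5\times\R$'' is precisely the statement to be proved; as written, your proposal would require sweeping all $6$-dimensional nilpotent Lie algebras carrying a complex structure, each with its (partly continuous) moduli of invariant complex structures, and none of these computations are supplied. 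The paper avoids this brute-force sweep by a structural reduction you are missing: for an invariant \lcht\ form with everywhere non-vanishing invariant Lee form $\vartheta$, Dixmier's vanishing theorem for the twisted Lie-algebra cohomology forces $\Omega$ to be $\de_\vartheta$-exact, whence (via Tischler's theorem and the Banyaga construction, Theorem \ref{thm:nilmanifold-lcht--mapping-torus-contact}) $X$ is a mapping torus over a $5$-dimensional \emph{contact} nilmanifold. Kutsak's classification of contact $5$-nilmanifolds, combined with the requirement that $\g$ admit a linear complex structure, leaves only $\mathfrak{h}_3$, $\mathfrak{h}_9$ and $\mathfrak{h}_{19}^-$ besides the torus; these are then eliminated or confirmed by short explicit linear-algebra computations (Lemmas \ref{lemma:h9-J-no-lcht}, \ref{lemma:h3-J--no-lcht}, \ref{lemma:h19-J+--no-lcht}, \ref{lemma:h3-J+-lcht}).

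A second, more local gap is the mechanism you propose for concluding in the surviving cases: it is not true that ``$\alpha$ can be killed by a suitable closed correction'' to turn the given \lcht\ form into an \lcK\ one. On $\mathfrak{h}_3$ there are two invariant complex structures $J_\pm$; the paper shows $J_-$ admits \emph{no} \lcht\ structure at all (the $(2,1)$-equations force $\gamma^2(r^2+s^2)+|\alpha|^2t^2+|\beta|^2t^2=0$, impossible), while for $J_+$ the \lcK\ structure is exhibited independently (Sawai's theorem, or the explicit form $\omega=\im\sum\varphi^j\wedge\bar\varphi^j$ with Lee form $\varphi^3+\bar\varphi^3$), not derived from the taming form by discarding its $(2,0)$-part. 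Your $(2,1)$-bidegree equation $\delbar\alpha+\del\omega=\bar\eta\wedge\alpha+\eta\wedge\omega$ shows that setting $\alpha=0$ changes the equation, so some argument is needed to pass from \lcht\ to \lcK, and the one you sketch does not supply it.
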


The proof is based on general results and on explicit computations. More precisely,
one can view nilmanifolds endowed with left-invariant complex structures and with locally conformal holomorphic tamed structures as mapping tori over contact nilmanifolds, see Theorem \ref{thm:nilmanifold-lcht--mapping-torus-contact},
see also \cite{banyaga,bazzoni-marrero-2}. This result is
related to \cite[Theorem 1, Theorem 2]{li}, where it is proven that compact co-symplectic manifolds are symplectic mapping tori, and compact co-K\"ahler manifolds are K\"ahler mapping tori. We further provide an obstruction to the differentiable structure underlying a $2$-step nilmanifold endowed with a left-invariant complex structure and with a locally conformal holomorphic-tamed structure in Theorem \ref{thm:nilmfd-2step-lcht--heisenberg}.

As a further class of examples in view of Question \ref{conj:nilmfd-lcht--lck}, we study compact complex surfaces diffeomorphic to solvmanifolds, as classified by K. Hasegawa in \cite{hasegawa-jsg}.

\renewcommand{\referenza}{\ref{thm:4solvmfds-lcht} (see also \cite[Theorem 7]{belgun} and \cite[Theorem 1.1]{apostolov-dloussky})}
\begin{thm*}
 Let $X$ be a compact complex surface diffeomorphic to a solvmanifold.
 Then $X$ admits locally conformal holomorphic-tamed structures.
 Except in the case of Inoue surface of type $\mathcal{S}^\pm$ with $q\neq0$, then $X$ admits also locally conformal K\"ahler structures.
\end{thm*}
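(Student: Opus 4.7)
The plan is to enumerate the compact complex surfaces diffeomorphic to solvmanifolds using Hasegawa's classification in \cite{hasegawa-jsg}, and for each class to exhibit an explicit left-invariant locally conformal holomorphic-tamed form, then appeal to Belgun's classification for the locally conformal K\"ahler statement. The list (recorded in Table \ref{table:eq-struttura-4-solvmfds}) consists of the complex torus, the hyperelliptic surface, the primary and secondary Kodaira surfaces, and the Inoue surfaces $\mathcal{S}^0$, $\mathcal{S}^\pm$. Since we only want to construct structures (not obstruct them), it suffices to work at the Lie algebra level: any invariant form descends to the compact quotient. In real dimension four the holomorphic-tamed condition reduces to $d\Omega=0$ with $\Omega(\cdot,J\cdot)$ positive definite, and its locally conformal version asks for an invariant closed one-form $\theta$ with $d\Omega=\theta\wedge\Omega$ and $\Omega(\cdot,J\cdot)$ positive.

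For the K\"ahler solvmanifolds, namely the complex torus and the hyperelliptic surface, any K\"ahler form gives both conclusions at once with vanishing Lee form $\theta=0$. For the primary and secondary Kodaira surfaces I would use the classical invariant locally conformal K\"ahler metrics, which are in particular locally conformal holomorphic-tamed. The Inoue surface $\mathcal{S}^0$ is locally conformal K\"ahler by Tricerri's construction, so both statements hold in that case as well.

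The decisive case is the Inoue surface $\mathcal{S}^\pm$ with $q\neq 0$: by \cite[Theorem 7]{belgun} no locally conformal K\"ahler metric exists, so one really must build a genuinely non-K\"ahler-compatible taming form. Using the invariant coframe $\{e^1,e^2,e^3,e^4\}$ with the structure equations from Table \ref{table:eq-struttura-4-solvmfds} and the left-invariant complex structure $J$, I would take the most general invariant real two-form $\Omega=\sum_{i<j}a_{ij}\,e^{ij}$, impose the locally conformal symplectic equation $d\Omega=\theta\wedge\Omega$ for an invariant closed one-form $\theta$ read off from the structure constants, and use the remaining free parameters to enforce the positivity $\Omega(v,Jv)>0$ for all nonzero real $v$. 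As noted in the excerpt, an alternative and shorter route is to invoke \cite[Theorem 1.1]{apostolov-dloussky}, which guarantees a locally conformal holomorphic-tamed structure on every compact complex surface with odd first Betti number, and to combine it with \cite[Theorem 7]{belgun} for the locally conformal K\"ahler part.

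The main obstacle lies precisely on $\mathcal{S}^\pm$ with $q\neq 0$: the structure equations in this case are not diagonal, so the linear system $d\Omega=\theta\wedge\Omega$ is genuinely coupled, and one must simultaneously ensure that $\Omega$ tames $J$ without being $J$-invariant. This is the feature that distinguishes locally conformal holomorphic-tamed from locally conformal K\"ahler, and it is what bypasses Belgun's obstruction. Once a candidate $\Omega$ is written down, the verifications of invariance, closedness of the Lee form, and positivity on complex lines reduce to routine direct computations at the Lie algebra level.
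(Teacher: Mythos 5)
Your proposal follows essentially the same route as the paper: use Hasegawa's classification from \cite{hasegawa-jsg} to reduce to left-invariant structures, exhibit explicit \lcK\ (hence \lcht) structures in the non-exceptional cases, and for the Inoue surface $\mathcal{S}^\pm$ with $q\neq0$ solve the invariant equation $\de\Omega=\vartheta\wedge\Omega$ for a taming but non-$J$-invariant form, which is exactly the computation the paper carries out (finding, e.g., $L=-\Re D+\frac{\im}{2}\,A\,q$ and $b=\frac{\im}{2}$, while the \lcK\ case fails because $Aq=-2\,\im\,\Re D$ would force $A=0$). The one step you defer --- writing down the explicit taming $2$-form on $\mathcal{S}^\pm$ --- does go through, and your fallback via \cite[Theorem 1.1]{apostolov-dloussky} together with \cite[Theorem 7]{belgun} is the same alternative the paper itself endorses.
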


Finally, we consider the $6$-dimensional
solvmanifolds endowed with an invariant
complex structure with holomorphically trivial canonical bundle studied in \cite{fino-otal-ugarte}, see also \cite{otal-phd}.
In a sense, they provide a first generalization of linear complex structures on nilmanifolds (see also \cite[Theorem 3.1]{cavalcanti-gualtieri}).

\renewcommand{\referenza}{\ref{cor:solmfd-otal-lcht-lck-final}}
\begin{cor*}
Let $X$ be a $6$-dimensional solvmanifold endowed with an invariant
complex structure with holomorphically trivial canonical bundle.
Then, $X$ admits a linear
locally conformal holomorphic-tamed
structure if and only if it admits a linear
locally conformal K\"ahler
structure.
\end{cor*}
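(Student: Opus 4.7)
The implication ``linear lcK $\Rightarrow$ linear lcht'' is immediate, since the fundamental form of any invariant Hermitian metric tames the complex structure.

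For the converse, the plan is to exploit the classification of \cite{fino-otal-ugarte, otal-phd}, which enumerates the $6$-dimensional solvable Lie algebras $\mathfrak{g}$ admitting an invariant complex structure $J$ with holomorphically trivial canonical bundle, together with the corresponding parametrizations of $J$. On $X=\Gamma\backslash G$, a linear lcht structure corresponds to a pair $(\Omega,\theta)\in\Lambda^2\mathfrak{g}^*\times\Lambda^1\mathfrak{g}^*$ with $\Omega$ non-degenerate, $d\theta=0$, $d\Omega=\theta\wedge\Omega$, and $\Omega(J\cdot,\cdot)$ positive definite; a linear lcK structure is the additional requirement $J^*\Omega=\Omega$. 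The nilpotent case is already settled by Theorem~\ref{thm:6-nilmfd-lcht--lck}, so only the properly solvable entries of the list need to be addressed.

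For each such $(\mathfrak{g},J)$, the plan is to expand a generic $\Omega$ and a generic closed $\theta$ in a $J$-adapted basis of $\mathfrak{g}^*$, translate $d\Omega=\theta\wedge\Omega$ into a linear system on the coefficients, and solve. In cases where the resulting system admits no solution with positive $(1,1)$-part, lcht is obstructed and the statement is vacuous. In the remaining cases, one checks that the $(2,0)+(0,2)$-component of $\Omega$ can be removed without destroying either the equation $d\Omega=\theta\wedge\Omega$ or the positivity of the $(1,1)$-part, thus producing a linear lcK structure on the same underlying $X$.

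The main obstacle is not conceptual but combinatorial: the verification has to be run through every Lie algebra and every family of complex structures in the classification. What should keep each check manageable is the rigidity imposed by the triviality of the canonical bundle, which sharply restricts the admissible Lee forms $\theta$ to a low-dimensional subspace of closed $1$-forms; once this constraint is imposed, the lcht and the lcK conditions end up forced to coincide.
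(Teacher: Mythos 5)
Your proposal takes essentially the same route as the paper: the corollary is obtained there as an immediate consequence of Theorem~\ref{thm:solmfd-otal-lcht-lck}, whose proof is exactly the case-by-case computation you describe --- expanding a generic invariant $\Omega$ and closed invariant $\vartheta$ in the bases of Table~\ref{table:otal-solvmfds} and solving $\de\Omega=\vartheta\wedge\Omega$ for each of the seven classes of \cite{fino-otal-ugarte}, finding that classes (2), (4), (5), (6), (7) admit no lcht structure while in classes (1) and (3) the solution set permits killing the $(2,0)+(0,2)$-part. Your outline is a plan rather than a completed verification, but the strategy, the reduction to the linear level, and the final dichotomy all match the paper's argument.
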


More precisely, there are only three complex structures in the classification given in \cite{fino-otal-ugarte} admitting
locally conformal holomorphic-tamed
structures, see Theorem~\ref{thm:solmfd-otal-lcht-lck}.

\bigskip

\noindent{\sl Acknowledgments.}
The first author is greatly indebted to Adriano Tomassini for his constant support and encouragement. This work was written during the first author's stay at Departamento de Matem\'aticas of the Universidad de Zaragoza supported by a grant by INdAM. He would like to warmly thank all the people in Zaragoza for their kindly hospitality. Many thanks also to Antonio Otal, Giovanni Bazzoni, Vestislav Apostolov and Georges Dloussky.
We also thank the anonymous referee for useful comments that helped us to improve
the final version of the paper.

\section{Locally conformal K\"ahler structures}\label{sec:lck}

\noindent Let $X$ be a $2n$-dimensional manifold endowed with a complex structure $J$. We recall that a {\em locally conformal K\"ahler structure} (shortly, {\em \lcK}) on $X$ is given by a positive real $(1,1)$-form $\omega \in \wedge^{1,1}X \cap \wedge^2X$ such that there exists $\vartheta\in\wedge^1X$ with $\de\omega=\vartheta\wedge\omega$ and $\de\vartheta=0$, see, e.g., \cite{dragomir-ornea} and the references therein. The form $\vartheta$ is called the {\em Lee form} associated to $\omega$. If $\vartheta$ is $\de$-exact, then $\omega$ is called {\em globally conformal K\"ahler} (shortly, {\em \gcK}).
In fact, denote by $g:=\omega(\sspace, J\ssspace)$ the $J$-Hermitian metric associated to $J$ and $\omega$. We note that, if $\vartheta=\de f$ for some $f\in\mathcal{C}^\infty(X;\R)$, then the metric $\exp(-f)g$ is a K\"ahler metric in the conformal class of $g$. In particular, by the Poincar\'e lemma, every point has a neighbourhood $U$ such that $\omega\lfloor_U$ is conformal to a K\"ahler metric on $U$.

\begin{rem}
 Let $X$ be a $2n$-dimensional manifold endowed with a complex structure $J$ and with a non-degenerate $2$-form $\Omega\in\wedge^2X$. Consider the operator $L:=\Omega\wedge\sspace\colon \wedge^{\bullet}X \to \wedge^{\bullet+2}X$. Recall that, for any $k\in\Z$, the operator $L^k\colon \wedge^{n-k}X\to\wedge^{n+k}X$ is an isomorphism, \cite[Corollary 2.7]{yan}. Recall also that, for any $k\leq n-s$, the operator $L^k \colon \wedge^{s}X \to \wedge^{s+2k}X$ is injective, \cite[Corollary 2.8]{yan}. It follows that:
 \begin{itemize}
  \item if $2n=4$, then there exists always a (possibly, non $\de$-closed) form $\vartheta\in\wedge^{1}X$ such that $\de\Omega=\vartheta\wedge\Omega$;
  \item if $2n>4$, then the existence of $\vartheta\in\wedge^{1}X$ such that $\de\Omega=\vartheta\wedge\Omega$ implies also that $\de\vartheta=0$.
 \end{itemize}
\end{rem}

\medskip

Define the operator
$$ \de_\vartheta \;:=\; \de - \vartheta \wedge \sspace \colon \wedge^\bullet X \to \wedge^{\bullet+1}X \;. $$
With these notations, the condition $\de\omega=\vartheta\wedge\omega$ can be written as $\de_\vartheta\omega=0$. Note that, since $\de\vartheta=0$, then $\de_\vartheta^2=0$, and hence one can define the cohomology
$$ H_{\vartheta}^\bullet(X) \;:=\; \frac{\ker\de_\vartheta}{\imm\de_\vartheta} \;. $$
Note that, in fact, up to a gauge transform, $H_{\vartheta}^\bullet(X)$ does not depend on $\vartheta\in\wedge^1X \cap \ker\de$ but just on $[\vartheta]\in H^1_{dR}(X;\R)$. In fact, for $f\in\mathcal{C}^\infty(X;\R)$, one has
$$ \de_{\vartheta+\de f} \;=\; \exp(f) \cdot \de_\vartheta \left( \exp(-f) \cdot \sspace \right) \;. $$

\subsection{Locally conformal K\"ahler structures and \texorpdfstring{$\partial\overline{\partial}$}{partialoverlinepartial}-Lemma}

We recall the following theorem by I. Vaisman.

\begin{thm}[{\cite[Theorem 2.1, Remark (1)]{vaisman}}]
 Consider a compact complex manifold satisfying the $\del\delbar$-Lemma. Then any \lcK\ structure is also \gcK.
\end{thm}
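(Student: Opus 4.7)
The plan is to show that the closed Lee form $\vartheta$ is in fact $\de$-exact: once $\vartheta=\de f$ for some $f\in\mathcal{C}^\infty(X;\R)$, the metric $\esp^{-f}g$ is a K\"ahler metric globally conformal to $g$. The strategy is two-fold: first, use the $\partial\overline{\partial}$-Lemma to reduce, up to conformal rescaling, to the case $\vartheta=\gamma+\overline{\gamma}$ for a closed holomorphic $(1,0)$-form $\gamma$; then exploit the defining equation $\de\omega=\vartheta\wedge\omega$ together with the positivity of $\omega$ to force $\gamma\equiv 0$.

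For the first step, on a compact complex manifold satisfying the $\partial\overline{\partial}$-Lemma, holomorphic $1$-forms are automatically $\de$-closed and there is a Hodge-type splitting $H^1_{dR}(X;\C)\simeq H^{1,0}_{\overline{\partial}}(X)\oplus H^{0,1}_{\overline{\partial}}(X)$ compatible with complex conjugation. Since $\vartheta$ is real and closed, its de Rham class decomposes as $[\gamma]+[\overline{\gamma}]$ with $\gamma$ a closed holomorphic $(1,0)$-form, so that $\vartheta=\gamma+\overline{\gamma}+\de f$ for some real function $f$. Then $\omega':=\esp^{-f}\omega$ is a positive $(1,1)$-form satisfying $\de\omega'=(\gamma+\overline{\gamma})\wedge\omega'$, so replacing $\omega$ by $\omega'$ we may assume from the outset that $\vartheta=\gamma+\overline{\gamma}$.

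For the second step, raising the lcK identity to the $(n-1)$-th exterior power gives $\de\omega^{n-1}=(n-1)(\gamma+\overline{\gamma})\wedge\omega^{n-1}$; combining with $\de\overline{\gamma}=0$ and $\overline{\gamma}\wedge\overline{\gamma}=0$ one obtains
\[
\de\bigl(\overline{\gamma}\wedge\omega^{n-1}\bigr)
\;=\;-\overline{\gamma}\wedge\de\omega^{n-1}
\;=\;(n-1)\,\gamma\wedge\overline{\gamma}\wedge\omega^{n-1}.
\]
Stokes' theorem on the compact manifold $X$ then yields $\int_X\im\,\gamma\wedge\overline{\gamma}\wedge\omega^{n-1}=0$. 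Now $\im\,\gamma\wedge\overline{\gamma}$ is a pointwise semi-positive $(1,1)$-form (of rank at most one), and its trace with respect to $\omega$ is, up to a combinatorial constant, the pointwise norm $|\gamma|^2_\omega$; hence the integrand is a non-negative multiple of the volume form $\omega^n$, and the vanishing of its integral forces $\gamma\equiv 0$. Therefore $\vartheta=\de f$, which proves the theorem.

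The main obstacle I anticipate is the first step, namely the careful justification via the $\partial\overline{\partial}$-Lemma that every closed real $1$-form is cohomologous to the real part of a closed holomorphic $1$-form; this requires that on such manifolds holomorphic $1$-forms are $\de$-closed (which in turn follows by applying the lemma to the pluriclosed, $\de$-exact form $\partial\gamma=\de\gamma$, whose bidegree prevents it from being $\partial\overline{\partial}$-exact unless it vanishes) and that the Dolbeault-type splitting of $H^1_{dR}(X;\C)$ is compatible with the real structure. Once this reduction is in place, the remainder is a short Stokes-plus-positivity computation.
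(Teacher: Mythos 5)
Your proof is correct, but it follows a genuinely different route from the one the paper uses. For this statement the paper defers to I. Vaisman's original argument, which it describes as showing that the Bott--Chern class of $\de J\vartheta\in\wedge^{1,1}X$ vanishes; it then reproves and strengthens the statement as Proposition \ref{prop:deldelbar-lck--gck}, deduced from Theorem \ref{thm:deldelbar-lcb--gcb}: there one passes to the Gauduchon representative in the conformal class, uses the injectivity of $H^{n-1,n}_{BC}(X)\to H^{n-1,n}_{\delbar}(X)$ to write $\delbar\omega^{n-1}=-\del\delbar\eta$, and then wedges with $\vartheta^{1,0}$, integrates, and invokes positivity. You instead apply the $\del\delbar$-Lemma in degree one: holomorphic $1$-forms are closed and $H^1_{dR}(X;\C)$ splits into $(1,0)$- and $(0,1)$-parts compatibly with conjugation, so after a conformal rescaling the Lee form becomes $\gamma+\bar\gamma$ with $\gamma$ a closed holomorphic $(1,0)$-form; then $\de\left(\bar\gamma\wedge\omega^{n-1}\right)=(n-1)\,\gamma\wedge\bar\gamma\wedge\omega^{n-1}$, Stokes' theorem, and the pointwise non-negativity of $\im\,\gamma\wedge\bar\gamma\wedge\omega^{n-1}$ (exactly the positivity claim established inside the proof of Theorem \ref{thm:deldelbar-lcb--gcb}, with $\gamma$ in place of $\vartheta^{1,0}$) force $\gamma\equiv0$. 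The trade-off is that your normalization of the Lee form genuinely uses the full $\del\delbar$-Lemma through its degree-one Hodge decomposition, whereas the paper's route requires only the single injectivity $H^{n-1,n}_{BC}(X)\to H^{n-1,n}_{\delbar}(X)$, which is strictly weaker; on the other hand, your argument dispenses with the Gauduchon normalization and all cohomological manipulation in bidegree $(n-1,n)$, and is self-contained for the statement as given. The two points you flag as delicate (closedness of holomorphic $1$-forms, compatibility of the splitting with the real structure) are indeed the only places needing care, and both follow from the $\del\delbar$-Lemma by the bidegree arguments you sketch.
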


The argument in \cite[Theorem 2.1]{vaisman} uses that the Bott-Chern cohomology class of $\de J\vartheta \in \wedge^{1,1}X$ vanishes, where $\vartheta$ is the Lee form of the \lcK\ structure and $J$ denotes the complex structure of $X$. This holds in particular if the natural map $H^{1,1}_{BC}(X) \to H^2_{dR}(X;\C)$ induced by the identity is injective. In \S\ref{subsec:lcb-deldelbar}, we will prove the following result with a weaker hypothesis than satisfying the $\del\delbar$-Lemma.

\renewcommand{\referenza}{\ref{prop:deldelbar-lck--gck}}
\begin{prop*}
 Let $X$ be a compact complex manifold of complex dimension $n$ such that the natural map $H^{n-1,n}_{BC}(X) \to H^{n-1,n}_{\delbar}(X)$ induced by the identity is injective. Then any \lcK\ structure is also \gcK.
\end{prop*}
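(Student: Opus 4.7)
The plan is to reduce this proposition to Theorem~\ref{thm:deldelbar-lcb--gcb} by observing that every \lcK\ structure automatically induces an \lcb\ structure with a proportional Lee form. More precisely, if $\omega$ is an \lcK\ form with $d\omega = \vartheta \wedge \omega$ and $d\vartheta = 0$, then taking exterior powers gives
\[
d(\omega^{n-1}) \;=\; (n-1)\, \vartheta \wedge \omega^{n-1},
\]
which exhibits $\omega^{n-1}$ as the $(n-1)$-st power of a locally conformal balanced Hermitian structure whose (still closed) Lee form is $(n-1)\vartheta$.

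Now I would invoke Theorem~\ref{thm:deldelbar-lcb--gcb}: under the hypothesis that the natural map $H^{n-1,n}_{BC}(X) \to H^{n-1,n}_{\delbar}(X)$ is injective, every \lcb\ structure is in fact globally conformal balanced, meaning that its Lee form is $d$-exact. Applied to the \lcb\ structure constructed above, this yields $(n-1)\vartheta = df$ for some $f \in \mathcal{C}^\infty(X;\mathbb{R})$, so that $\vartheta = d\!\left(\tfrac{f}{n-1}\right)$ is itself exact, whence $\omega$ is \gcK.

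The entire difficulty is concentrated in Theorem~\ref{thm:deldelbar-lcb--gcb}, where the $(n-1,n)$-Bott-Chern hypothesis must be used to detect exactness of the Lee form from cohomological information at the top level. By contrast, the reduction sketched above is formal: it uses only that the \lcK\ equation propagates to $\omega^{n-1}$ with Lee form rescaled by the constant $n-1$, and that exactness is trivially preserved under this rescaling. The main obstacle for the present statement is therefore already handled by the earlier theorem, and no additional analytic input is required here.
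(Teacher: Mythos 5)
Your argument is correct and coincides with the paper's own proof: the authors likewise record that an \lcK\ structure $\omega$ satisfies $\de\omega^{n-1}=(n-1)\,\vartheta\wedge\omega^{n-1}$, hence is \lcb\ with Lee form $(n-1)\vartheta$ (their Proposition \ref{prop:lck--lcb}), apply Theorem \ref{thm:deldelbar-lcb--gcb} to conclude it is \gcb, and then deduce exactness of $\vartheta$ from exactness of $(n-1)\vartheta$. No further comment is needed.
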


In the following example, we consider the deformations in class {\itshape (1)} of the holomorphically-parallelizable Nakamura manifold investigated in \cite{angella-kasuya-2}: they do not admit any \lcK\ structure. In fact, we note that they are compact complex non-K\"ahler manifolds satisfying the $\del\delbar$-Lemma, except for the central fibre. Furthermore, they are not in class $\mathcal{C}$ of Fujiki, they admit a left-invariant balanced metric, they admit no pluriclosed metric, they admit no left-invariant \uG\ metric (see \S\ref{subsec:kG} for definitions).

\begin{ex}\label{ex:def-1-nakamura}
 Consider the holomorphically-parallelizable Nakamura manifold, \cite[\S2]{nakamura}, see also \cite[\S3]{debartolomeis-tomassini}, namely, $(X,J_0)$, where $X:=\left. \Gamma \middle\backslash G \right.$ is the quotient of the solvable group
 $$ G \;:=\; \C \ltimes_\phi \C^2 \qquad \text{ with } \qquad \phi(z) \;:=\; \left(\begin{array}{cc}\esp^z&0\\0&\esp^{-z}\end{array}\right) $$
 by a lattice $\Gamma$ in $G$, and $J_0$ is the natural complex structure induced by the quotient.

 By considering a set $\left\{ z^1, z^2, z^3 \right\}$ of local holomorphic coordinates on $G$, a $G$-left-invariant co-frame for $T^{1,0}_{J_0}X$ is given by
 $$
 \left\{ \begin{array}{l}
  \phi^1_0 \;:=\; \de z^1 \\[5pt]
  \phi^2_0 \;:=\; \esp^{-z^1}\, \de z^2 \\[5pt]
  \phi^3_0 \;:=\; \esp^{ z^1}\, \de z^3
 \end{array} \right. \;,
 $$
 with structure equations
 $$
 \left\{ \begin{array}{l}
  \de\phi^1_0 \;=\; 0 \\[5pt]
  \de\phi^2_0 \;=\; -\phi^1_0 \wedge \phi^2_0 \\[5pt]
  \de\phi^3_0 \;=\;  \phi^1_0 \wedge \phi^3_0
 \end{array} \right. \;.
 $$

 As in \cite[\S4]{angella-kasuya-2}, consider the curve $\left\{J_t\right\}_{t\in\Delta(0,\varepsilon)}$, for $\varepsilon>0$, of complex structures on $X$ given by deforming the complex structure $J_0$ of the holomorphically-parallelizable Nakamura manifold in the direction $t\, \frac{\del}{\del z^1} \otimes \de z^1 \in H^{0,1}_{J_0}(X;T^{1,0}_{J_0}X)$. Such deformations are denoted as case {\itshape (1)} in \cite{angella-kasuya-2}, where it is proven that, for $t\neq 0$, the compact complex manifold $(X,J_t)$ satisfies the $\del\delbar$-Lemma, \cite[Proposition 4.1]{angella-kasuya-2}.
 For $t\in\Delta(0,\varepsilon)$, the complex structure $J_t$ is associated to the $G$-left-invariant co-frame $\left\{\phi^1_t, \phi^2_t, \phi^3_t\right\}$ of $T^{1,0}_{J_t}X$ with structure equations
 $$
 \left\{ \begin{array}{l}
  \de\phi^1_t \;=\; 0 \\[5pt]
  \de\phi^2_t \;=\; -\phi^1_t \wedge \phi^2_t + t\, \phi^2_t \wedge \bar\phi^1_t \\[5pt]
  \de\phi^3_t \;=\;  \phi^1_t \wedge \phi^3_t - t\, \phi^3_t \wedge \bar\phi^1_t
 \end{array} \right. \;,
 $$
 see \cite[Table 3]{angella-kasuya-2}.

 Note that, for $t\in\Delta(0,\varepsilon)$, the complex structure $J_t$ does not admit any K\"ahler metric; more precisely, $X$ does not admit any K\"ahler structure, \cite[Theorem 5.1]{debartolomeis-tomassini}. We claim that $(X,J_t)$ is not in class $\mathcal{C}$ of Fujiki, \cite{fujiki}, see also \cite[Remark 6.3]{angella-kasuya-2}, even if it satisfies the $\del\delbar$-Lemma for $t\neq0$. (See also \cite[Theorem 9]{arapura}, \cite[Theorem 1.1]{bharali-biswas-mj}, or \cite[Theorem 3.3]{angella-kasuya-4})

 Indeed, consider the $G$-left-invariant real $(1,1)$-form
 \begin{eqnarray*}
  \omega_t &:=& \im\, \left( A\, \phi^1_t \wedge \bar\phi^1_t + B\, \phi^2_t \wedge \bar\phi^2_t + C\, \phi^3_t \wedge \bar\phi^3_t \right) \\[5pt]
  && + \left( D\, \phi^1_t \wedge \bar\phi^2_t - \bar D\, \phi^2_t \wedge \bar\phi^1_t \right)  + \left( E\, \phi^1_t \wedge \bar\phi^3_t - \bar E\, \phi^3_t \wedge \bar\phi^1_t \right) + \left( F\, \phi^2_t \wedge \bar\phi^3_t - \bar F\, \phi^3_t \wedge \bar\phi^2_t \right) \;,
 \end{eqnarray*}
 with $A,B,C\in\R$ and $D,E,F\in\C$. The form $\omega_t$ is positive (i.e., it is the $(1,1)$-form associated to a $J_t$-Hermitian metric) if and only if, \cite[page 189]{ugarte},
 $$ \left\{\begin{array}{l}
            A > 0 \\[5pt]
            B > 0 \\[5pt]
            C > 0 \\[5pt]
            AB > |D|^2 \\[5pt]
            AC > |E|^2 \\[5pt]
            BC > |F|^2 \\[5pt]
            ABC + 2\, \Re(\im \bar D E \bar F) > C\,|D|^2 + A\,|F|^2 + B\,|E|^2
    \end{array}\right. \;.
 $$

 A straightforward computation gives
 $$ \del_t\delbar_t \omega_t \;=\; -\im\,(1+t)\,(1+\bar t)\, \left(B\, \phi^{12\bar1\bar2}_t + C\, \phi^{13\bar1\bar3}_t \right) + (-1+t)\,(-1+\bar t)\, \left( F\, \phi^{12\bar1\bar3}_t - \bar F\, \phi^{13\bar1\bar2}_t \right) \;, $$
 where we have shortened, e.g., $\phi^{12\bar1\bar3}_t \;:=\; \phi^{1}_{t}\wedge\phi^{2}_{t}\wedge\bar\phi^{1}_{t}\wedge\bar\phi^{3}_{t}$.

 Take, e.g., $F=0$ and $A=B=C=1$. For such values, $\im\del_t\delbar_t\omega_t\geq 0$, therefore, by \cite[Theorem 2.3]{chiose-pams}, it follows that $(X,J_t)$ is not in class $\mathcal{C}$ of Fujiki.

 As a different argument to prove the non-K\"ahlerianity of $J_t$, we note that, by the F.~A. Belgun symmetrization trick, \cite[Theorem 7]{belgun}, if $(X,J_t)$ admits a K\"ahler metric, then it admits also a $G$-left-invariant K\"ahler metric. And hence, in particular, it admits a $G$-left-invariant pluriclosed metric, as well as a $G$-left-invariant \uG\ metric. On the other side, for the generic $\omega_t$ as above, we compute
 $$ \del_t\delbar_t\omega_t \wedge \omega_t \;=\; 2\, \left( (1+t)\, (1+\bar t)\, BC + (-1+t)\, (-1+\bar t)\, |F|^2 \right) \, \phi^{123\bar1\bar2\bar3}_t \;. $$
 Now note that, since $0 < \frac{|F|^2}{BC} < 1$, then
 \begin{eqnarray*}
  (1+t)\, (1+\bar t) + (-1+t)\, (-1+\bar t)\, \frac{|F|^2}{BC} &=& (1+|t|^2)\, (1+\frac{|F|^2}{BC}) + 2\, \Re t\, (1-\frac{|F|^2}{BC}) \\[5pt]
   &\geq& (1+|t|^2)\, (1+\frac{|F|^2}{BC}) - 2\, |t|\, (1-\frac{|F|^2}{BC}) \\[5pt]
   &=&    (1-|t|)^2 + (1+|t|)^2\, \frac{|F|^2}{BC} \\[5pt]
   &>&    0 \;,
 \end{eqnarray*}
 providing that there is no $G$-left-invariant $1$-Gauduchon metric for $t \in \Delta(0,\varepsilon)$.

 Note also that the condition $\del_t\delbar_t\omega_t=0$ for the generic $\omega_t$ implies $B=C=F=0$. In particular, $(X,J_t)$ does not admit any $G$-left-invariant pluriclosed metric, and hence, again by the F.~A. Belgun trick, \cite[Theorem 7]{belgun}, it admits no pluriclosed metric.

 Finally, aside, note also that the $J_t$-Hermitian metric with associated $(1,1)$-form $\Omega_t := \im \left( \phi^1_t\wedge\bar\phi^1_t + \phi^2_t\wedge\bar\phi^2_t + \phi^3_t\wedge\bar\phi^3_t \right)$ is a $G$-left-invariant balanced metric on $(X,J_t)$, for $t\in\Delta(0,\varepsilon)$.

 Since $(X,J_t)$, for $t\in\Delta(0,\varepsilon)\setminus\{0\}$, satisfies the $\del\delbar$-Lemma but admits no K\"ahler structure, by \cite[Theorem 2.1]{vaisman}, it follows that it admits no \lcK\ structure.
\end{ex}

\subsection{K\"ahler identities for locally conformal K\"ahler structures}
Let $X$ be a $2n$-dimensional manifold endowed with a complex structure $J$ and a \lcK\ structure $\omega$. Denote by $\vartheta\in\wedge^1X$ the Lee form of $\omega$.

Consider the operator
$$ L \;:=\; \omega\wedge\sspace\colon \wedge^{\bullet}X\to\wedge^{\bullet+2}X \;. $$
Since $\omega$ is non-degenerate, one can define the operator
$$ \Lambda \;:=\; -\iota_{\omega^{-1}} \colon \wedge^{\bullet}X\to\wedge^{\bullet-2}X \;. $$
Consider also the operator
$$ H \;:=\; \sum_{k\in\Z} (n-k)\,\pi_{\wedge^{k}X} \colon \wedge^{\bullet}X\to\wedge^{\bullet}X \;, $$
where $\pi_{\wedge^{k}X}\colon \wedge^{\bullet}X\to \wedge^kX$ denotes the natural projection. One has that $\left\langle L,\Lambda, H\right\rangle$ is a $\mathfrak{sl}(2;\C)$-representation of $\wedge^\bullet X \otimes_\R\C$, see \cite[Corollary 1.6]{yan}. In particular, one defines the space of {\em primitive forms}, namely, $P^kX:=\ker\Lambda\lfloor_{\wedge^kX}=\ker L^{n-k+1}\lfloor_{\wedge^kX}$, \cite[Corollary 2.6]{yan}, and there holds the {\em Lefschetz decomposition}
$$ \wedge^\bullet X \;=\; \bigoplus_{k\in\Z} L^{k}P^{\bullet-2k}X \;, $$
see \cite[Corollary 2.6]{yan}.

\medskip

We recall the following results.

\begin{lemma}[{\cite[Corollary 2.8, Corollary 2.7]{yan}}]
 Let $X$ be a $2n$-dimensional manifold endowed with a non-degenerate $2$-form $\Omega\in\wedge^2X$. Consider the operator $L:=\Omega\wedge\sspace \colon \wedge^{\bullet}X \to \wedge^{\bullet+2}X$. The operator $L^k$ is injective on $\wedge^sX$ for $k \leq n-s$. The operator $L^k\colon \wedge^{n-k}X\to\wedge^{n+k}X$ is an isomorphism for any $k\in\Z$.
\end{lemma}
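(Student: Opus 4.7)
Both assertions are pointwise in nature: fixing $x\in X$ and a basis of $T_x^*X$ reduces the claim to a purely linear-algebraic statement on a $2n$-dimensional real symplectic vector space $(V,\Omega)$. The plan is to exhibit an $\mathfrak{sl}(2;\C)$-action on $\wedge^\bullet V^*\otimes_\R\C$ and to read off both conclusions from the finite-dimensional representation theory of $\mathfrak{sl}(2;\C)$, applied summand by summand.

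First I would introduce the adjoint $\Lambda := -\iota_{\Omega^{-1}}$, where $\Omega^{-1}\in\wedge^2 V$ is the bivector dual to $\Omega$, together with the grading operator $H := \sum_k (n-k)\,\pi_{\wedge^k V^*}$. Working in a Darboux basis, so that $\Omega = \sum_{i=1}^{n} e^i\wedge e^{i+n}$, a direct computation yields the commutation relations
\[
 [H,L] \;=\; -2L, \qquad [H,\Lambda] \;=\; 2\Lambda, \qquad [L,\Lambda] \;=\; H,
\]
making $\langle L,\Lambda,H\rangle$ a copy of $\mathfrak{sl}(2;\C)$ acting on the finite-dimensional space $\wedge^\bullet V^*\otimes_\R\C$. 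Since $H$ acts on $\wedge^s V^*$ as the scalar $n-s$ and only $H$-eigenvalues $w$ with $|w|\leq n$ can occur, Weyl's theorem yields a decomposition of $\wedge^\bullet V^*\otimes_\R\C$ into finite-dimensional irreducibles, all of highest weight $m\in\{0,1,\ldots,n\}$.

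Next I would use the standard description of irreducibles: in one of highest weight $m$ the weights are $m,m-2,\ldots,-m$ (each of multiplicity one) and the lowering operator $L$ restricts to an isomorphism from the weight-$w$ subspace onto the weight-$(w-2)$ subspace whenever $w>-m$. For the injectivity claim, given $s\leq n$ and $k\leq n-s$, the Lefschetz decomposition $\wedge^s V^*=\bigoplus_{j\geq 0} L^j P^{s-2j}V^*$ shows that every irreducible component appearing in $\wedge^s V^*$ has highest weight $m\geq n-s$; hence $(n-s)-2k \geq -(n-s) \geq -m$, so $L^k$ is injective on every summand and therefore on $\wedge^s V^*$. For the isomorphism assertion, $\wedge^{n-k}V^*$ sits in the $H$-weight $k$ part and $\wedge^{n+k}V^*$ in the weight $(-k)$ part; in each irreducible summand the weight-$k$ and weight-$(-k)$ subspaces have equal dimension (both zero or both one) and $L^k$ identifies them, so $L^k\colon\wedge^{n-k}V^*\to\wedge^{n+k}V^*$ is an isomorphism.

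The main obstacle I anticipate is the verification of $[L,\Lambda]=H$, which, although classical, requires some care with the contraction $\iota_{\Omega^{-1}}$ in a Darboux basis; once the three commutators have been checked, the remainder is a clean application of finite-dimensional $\mathfrak{sl}(2)$-representation theory with no further symplectic input.
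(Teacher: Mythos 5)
Your argument is correct and is essentially the argument of record: the paper offers no proof of this lemma, quoting it verbatim from Yan's \emph{Hodge structure on symplectic manifolds} (Corollaries 2.7 and 2.8), and Yan's derivation is exactly the pointwise $\mathfrak{sl}(2;\C)$-representation argument you sketch (complete reducibility plus the weight-by-weight behaviour of the lowering operator on each irreducible summand). The only detail to adjust is the sign of the third commutator: with the conventions $\Lambda=-\iota_{\Omega^{-1}}$ and $H=\sum_{k}(n-k)\,\pi_{\wedge^{k}V^{*}}$ one finds $[L,\Lambda]=-H$ rather than $H$ (compare the identity $[L^{j},\Lambda]=j\,(k-n+j-1)\,L^{j-1}$ quoted in the paper, at $j=1$), but this is harmless, since it merely exchanges which of $L$, $\Lambda$ is the raising operator and does not affect the weight bookkeeping on which both of your conclusions rest.
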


\begin{lemma}[{Weyl identity, \cite[Proposition 1.2.31]{huybrechts}}]
 Let $X$ be a $2n$-dimensional manifold endowed with a non-degenerate $2$-form $\Omega\in\wedge^2X$. Consider an almost-complex structure $J$ on $X$ such that $g:=\Omega(\sspace, J\ssspace)$ is a $J$-Hermitian metric on $X$, see \cite[Corollary 12.7]{cannasdasilva}. Consider the operator $L:=\Omega\wedge\sspace \colon \wedge^{\bullet}X\to\wedge^{\bullet+2}X$, and the Hodge-$*$-operator $*\colon \wedge^{\bullet}X\to\wedge^{2n-\bullet}X$ associated to $g$. Then, for any $j\in\Z$ and $k\in\Z$, the {\em Weyl identity} holds:
 $$ \left. * L^j \right\lfloor_{P^kX} \;=\; \left. (-1)^{\frac{k(k+1)}{2}}\,\frac{j!}{(n-k-j)!}\, L^{n-k-j}J \right\lfloor_{P^kX} \;. $$
\end{lemma}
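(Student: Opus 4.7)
Since the Weyl identity is a pointwise tensorial statement on $\wedge^\bullet X$, my plan is first to reduce to a linear-algebraic identity on a single Hermitian vector space $(V, J, g, \Omega)$ of real dimension $2n$, and then to verify it in a standard unitary coframe. I would choose a $(1,0)$-coframe $\{\phi^1, \ldots, \phi^n\}$ of $V^{1,0}$ in which $\Omega = \frac{\im}{2}\sum_{j=1}^n \phi^j \wedge \bar\phi^j$ and the volume form induced by $g$ equals $\Omega^n/n!$. Relative to this coframe, the operators $L$, $\Lambda$, $J$ and $*$ admit explicit expressions, and the $\mathfrak{sl}(2;\C)$-structure recalled in the preceding lemma becomes transparent.

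The next step is the base case $j = 0$. By linearity in $\alpha$, and using the $(p,q)$-refinement of the primitive decomposition, it suffices to test the identity on monomials $\alpha = \phi^I \wedge \bar\phi^J$ with disjoint index sets $I, J \subset \{1,\ldots,n\}$; these are primitive because in our coframe $\Lambda$ is, up to a nonzero scalar, contraction with $\sum_\ell \partial_{\phi^\ell} \wedge \partial_{\bar\phi^\ell}$, which annihilates any form whose $(1,0)$- and $(0,1)$-supports are disjoint. Setting $p := |I|$, $q := |J|$ and $k := p+q$, one has $J\alpha = \im^{p-q}\alpha$, and a direct computation of the Hodge star by pairing $\alpha$ against its complementary monomial in the volume form yields
$$ *\alpha \;=\; (-1)^{k(k+1)/2}\,\frac{1}{(n-k)!}\, L^{n-k}J\alpha, $$
which is the claim for $j = 0$.

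For general $j$, I would compute $*L^j\alpha$ directly on such a monomial: expanding $\Omega^j$ via the multinomial theorem and noting that only unitary summands $\phi^K \wedge \bar\phi^K$ with $K$ disjoint from $I \cup J$ survive after wedging with $\alpha$, one is left with a sum of $\binom{n-k}{j}$ simple monomials; applying $*$ termwise and reassembling the outcome as $L^{n-k-j}J\alpha$ produces precisely the combinatorial factor $j!/(n-k-j)!$. The principal obstacle is the careful bookkeeping of the sign $(-1)^{k(k+1)/2}$, arising from the permutations needed to put indices in canonical order when identifying the Hodge star, together with the coordination of the factorials coming from the multinomial expansions of $\Omega^j$ and $\Omega^{n-k-j}$. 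A more conceptual alternative would exploit the observation that both sides of the identity define $U(n)$-equivariant maps $P^kV \otimes_\R \C \to \wedge^{2n-k-2j}V \otimes_\R \C$, so that by Schur's lemma applied to the multiplicity-free $U(n)$-isotypic decomposition of $P^kV$ the identity is pinned down, up to an overall scalar in each bidegree, by a single evaluation on a well-chosen primitive form, thereby bypassing most of the combinatorial gymnastics.
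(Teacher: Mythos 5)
The paper does not actually prove this lemma: it is recalled verbatim with a citation to \cite[Proposition 1.2.31]{huybrechts}, so your proposal can only be measured against the standard proofs. Your reduction to a single Hermitian vector space $(V,J,g,\Omega)$ and the choice of a unitary coframe are the right (and standard) first moves, and your $j=0$ computation on a monomial $\phi^I\wedge\bar\phi^J$ with $I\cap J=\emptyset$ is correct as far as it goes. The genuine gap is the claim that such monomials suffice: they do \emph{not} span the primitive subspace. Already for $n=2$ the form $\phi^1\wedge\bar\phi^1-\phi^2\wedge\bar\phi^2$ is primitive of type $(1,1)$, while the disjoint-support monomials of type $(1,1)$ are the $\phi^a\wedge\bar\phi^b$ with $a\neq b$, spanning a $2$-dimensional subspace of the $3$-dimensional $P^{1,1}$; in general there are $\binom{n}{p}\binom{n-p}{q}$ disjoint-support monomials against $\dim P^{p,q}=\binom{n}{p}\binom{n}{q}-\binom{n}{p-1}\binom{n}{q-1}$, and the former is strictly smaller whenever $p,q\geq1$ and $p+q\leq n$. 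Since the identity is asserted only on $P^kX$ (the two sides really do disagree on $L\beta$ for $\beta$ primitive, as one checks by comparing constants), you cannot enlarge the test set to all monomials either; so both your base case and your general-$j$ computation, which use the disjointness of $I$, $J$ and $K$ essentially, establish the identity only on a proper subspace of $P^kX$.

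Your closing ``conceptual alternative'' is therefore not an optional shortcut but the missing ingredient. One must know that $P^{p,q}V\otimes_\R\C$ is an irreducible $U(n)$-module, that the $P^{p,q}$ are pairwise non-isomorphic, and that the Lefschetz decomposition of the target $\wedge^{2n-k-2j}V^*\otimes_\R\C$ then forces the space of $U(n)$-equivariant maps out of $P^{p,q}$ to be one-dimensional; after that, evaluation on a single disjoint-support monomial (which is a nonzero element of $P^{p,q}$) pins down the scalar, and your combinatorial computation does exactly that. If you prefer to avoid representation theory, the proof in \cite{huybrechts} instead proceeds by induction on $n$ through an orthogonal $J$-invariant splitting $V=W_1\oplus W_2$ and the compatibility of the primitive decomposition with such splittings. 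Either way, some argument beyond ``check on disjoint-support monomials'' is required, and you should promote one of these two mechanisms from an aside to the backbone of the proof.
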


\begin{lemma}[{\cite[Lemma 2.3]{yan}}]
 Let $X$ be a $2n$-dimensional manifold endowed with a non-degenerate $2$-form $\Omega\in\wedge^2X$. Consider the operators $L:=\Omega\wedge\sspace \colon \wedge^{\bullet}X \to \wedge^{\bullet+2}X$ and $\Lambda:=-\iota_{\omega^{-1}} \colon \wedge^{\bullet}X \to \wedge^{\bullet-2}X$. Then, for any $j\in\Z$,
 $$ \left[ L^j, \Lambda \right] \;=\; j\, (k-n+j-1)\, L^{j-1} \;. $$
\end{lemma}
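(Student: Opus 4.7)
The natural plan is an induction on $j\geq 0$, building on the $\mathfrak{sl}(2;\C)$-triple $\langle L,\Lambda,H\rangle$ recalled just before the statement (cf.~\cite[Corollary 1.6]{yan}). The case $j=0$ is trivial ($[\id,\Lambda]=0$, which matches $0\cdot(k-n-1)\,L^{-1}$), and the case $j=1$ amounts to the statement that $[L,\Lambda]$ acts on $\wedge^{k}X$ by the scalar $k-n$; this is a direct consequence of the $\mathfrak{sl}(2;\C)$-structure (up to the sign convention fixing $H$), and agrees with $1\cdot(k-n+1-1)\,L^{0}=(k-n)\,\id$.

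For the inductive step, I would use the Leibniz-type identity
\[
 [L^{j+1},\Lambda]\;=\;L\,[L^{j},\Lambda]\,+\,[L,\Lambda]\,L^{j},
\]
which follows from the general rule $[AB,C]=A[B,C]+[A,C]B$. Evaluated on $\alpha\in\wedge^{k}X$, the first summand equals $j(k-n+j-1)\,L^{j}\alpha$ by the inductive hypothesis. For the second summand, since $L^{j}\alpha\in\wedge^{k+2j}X$, the base case must be applied in degree $k+2j$, producing $(k+2j-n)\,L^{j}\alpha$. Adding and simplifying the coefficient yields
\[
 j\,(k-n+j-1)\,+\,(k+2j-n)\;=\;(j+1)\,(k-n+j),
\]
which is exactly the coefficient $(j+1)\bigl(k-n+(j+1)-1\bigr)$ predicted by the formula for $j+1$, closing the induction. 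The statement is formulated for all $j\in\Z$; for $j<0$ one interprets $L^{j}$ via the appropriate Lefschetz isomorphism (restricted to the degrees where it is invertible) and deduces the identity by applying $L^{|j|}$ to both sides, reducing to the non-negative case already established.

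I do not expect any serious obstacle: the argument is a purely algebraic $\mathfrak{sl}(2;\C)$-bracket manipulation, and the only delicate point is tracking the degree shift of $+2j$ produced by $L^{j}$, so that the scalar by which $[L,\Lambda]$ acts is evaluated in the correct total degree at each stage of the induction. An alternative, less elementary route would be to decompose $\wedge^{\bullet}X$ into irreducible $\mathfrak{sl}(2;\C)$-summands and read off $[L^{j},\Lambda]$ from the action on each weight space, but the direct induction sketched above is shorter and self-contained, and is what I would carry out.
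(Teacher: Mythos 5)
The paper gives no proof of this lemma: it is quoted directly from \cite[Lemma 2.3]{yan}, so there is no in-paper argument to compare against. Your induction is correct: the base case $[L,\Lambda]=-H=(k-n)\,\id$ on $\wedge^kX$ is exactly the $\mathfrak{sl}(2;\C)$-relation with the paper's convention $H=\sum_k(n-k)\,\pi_{\wedge^kX}$, the Leibniz step together with the degree shift to $k+2j$ gives $j(k-n+j-1)+(k+2j-n)=(j+1)(k-n+j)$ as required, and this is essentially the standard argument one finds in Yan's paper.
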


\medskip

We recall that, for $\eta\in\wedge^1X$ such that $\de\eta=0$, the differential operator $\de_\eta \colon \wedge^{\bullet}X\to\wedge^{\bullet+1}X$ is defined as $\de_\eta \;:=\; \de - \eta\wedge\sspace$. If $J$ denotes an almost-complex structure, then
$$ \de_\eta^c \;:=\; J^{-1}\de_\eta J \;. $$

In \cite{vanle-vanzura}, the following commutation result, concerning $L$, holds, more in general, for a {\em locally conformal symplectic} (shortly, {\em \lcs}) structure on a manifold $X$, namely, a non-degenerate real $2$-form $\Omega\in\wedge^2X$ such that $\de\Omega=\vartheta\wedge\Omega$ with $\de\vartheta=0$.

\begin{lemma}[{\cite[Equation (2.5)]{vanle-vanzura}}]
Let $X$ be a manifold endowed with a \lcs\ structure $\Omega$. Consider the operator $L:=\Omega\wedge\sspace \colon \wedge^{\bullet}X\to\wedge^{\bullet+2}X$. Then, for any $k\in\Z$ and $\ell\in\Z$,
$$ \de_{(\ell+k)\vartheta} L^k \;=\; L^k \de_{\ell\vartheta} \;. $$
\end{lemma}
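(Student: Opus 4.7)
The plan is to verify the commutation identity by a direct Leibniz computation; no representation-theoretic input is needed beyond the locally conformal symplectic (\lcs) condition $d\Omega=\vartheta\wedge\Omega$ with $d\vartheta=0$.

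The first step I would carry out is to establish the basic formula $d(\Omega^{k})=k\,\vartheta\wedge\Omega^{k}$ for all $k\in\Z_{\geq0}$. For $k\geq 1$ this follows by Leibniz, since $d\Omega=\vartheta\wedge\Omega$, and since $\Omega$ has even degree one may freely move $\vartheta$ past powers of $\Omega$: explicitly,
$$
d(\Omega^{k}) \;=\; k\,\Omega^{k-1}\wedge d\Omega \;=\; k\,\Omega^{k-1}\wedge\vartheta\wedge\Omega \;=\; k\,\vartheta\wedge\Omega^{k}.
$$
For $k\leq 0$ the statement is either vacuous or follows by differentiating $\Omega^{k}\wedge\Omega^{-k}=1$ (on the open locus where $\Omega^{-1}$ is defined via the isomorphisms $L^{k}$). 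In the present setting only $k\geq 0$ is really needed for the statement, so this subtlety can be side-stepped.

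Second, I would apply Leibniz once more to $\Omega^{k}\wedge\alpha$ for an arbitrary $\alpha\in\wedge^{\bullet}X$: since $\Omega^{k}$ has even degree,
$$
d\bigl(\Omega^{k}\wedge\alpha\bigr) \;=\; d(\Omega^{k})\wedge\alpha+\Omega^{k}\wedge d\alpha \;=\; k\,\vartheta\wedge\Omega^{k}\wedge\alpha+\Omega^{k}\wedge d\alpha.
$$
Using this together with the even-degree commutation $\Omega^{k}\wedge\vartheta=\vartheta\wedge\Omega^{k}$, the left-hand side of the desired identity unfolds as
$$
\de_{(\ell+k)\vartheta}L^{k}\alpha \;=\; k\,\vartheta\wedge\Omega^{k}\wedge\alpha+\Omega^{k}\wedge d\alpha-(\ell+k)\,\vartheta\wedge\Omega^{k}\wedge\alpha \;=\; \Omega^{k}\wedge d\alpha-\ell\,\vartheta\wedge\Omega^{k}\wedge\alpha,
$$
while the right-hand side is
$$
L^{k}\de_{\ell\vartheta}\alpha \;=\; \Omega^{k}\wedge d\alpha-\ell\,\Omega^{k}\wedge\vartheta\wedge\alpha \;=\; \Omega^{k}\wedge d\alpha-\ell\,\vartheta\wedge\Omega^{k}\wedge\alpha.
$$
The two coincide, proving the lemma.

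There is really no obstacle: the whole content is the \lcs\ condition $d\Omega=\vartheta\wedge\Omega$, which produces the extra factor $k\,\vartheta\wedge$ in $d\circ L^{k}$ and is exactly compensated by the shift from $\de_{\ell\vartheta}$ to $\de_{(\ell+k)\vartheta}$. The only thing to watch is the parity bookkeeping (so that $\vartheta$ can be moved freely across $\Omega^{k}$); this is automatic because $\Omega^{k}$ has even degree. Note that $d\vartheta=0$ is used implicitly only to ensure that $\de_{m\vartheta}$ is a well-defined (square-zero) twisted differential, but it is not required for the commutation relation itself.
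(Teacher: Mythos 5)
Your computation is correct: the identity $\de(\Omega^{k})=k\,\vartheta\wedge\Omega^{k}$ combined with the Leibniz rule and the evenness of $\deg\Omega^{k}$ gives exactly the cancellation that turns $\de_{(\ell+k)\vartheta}L^{k}$ into $L^{k}\de_{\ell\vartheta}$, and you are right that $\de\vartheta=0$ plays no role in the commutation itself. The paper does not prove this lemma but simply quotes it from the cited reference, so there is no argument to compare against; your direct verification is the natural one. The only point worth flagging is that the statement is phrased for all $k\in\Z$, whereas your argument covers $k\geq 0$; for negative $k$ the operator $L^{k}$ only makes sense as the inverse of $L^{-k}$ on the appropriate degrees, where the identity follows formally from the non-negative case, and in any event only non-negative powers are used in the paper.
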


We prove now the following commutation result concerning $\Lambda$: it could be compared with the K\"ahler identities in the K\"ahler case.

\begin{prop}\label{prop:lck-kahler-identities}
 Let $X$ be a $2n$-dimensional manifold endowed with a complex structure $J$ and a \lcK\ structure $\omega\in\wedge^2X$ with Lee form $\vartheta\in\wedge^1X\cap\ker\de$.
 For any $j\in\Z$ and $k\in\Z$ and $\ell\in\Z$, it holds
 $$ \left.\left( \Lambda \de_{\ell\vartheta} - \de_{(\ell-1)\vartheta}\Lambda \right)\right\lfloor_{L^jP^kX} \;=\; \left. - \left(\de^c_{(n+\ell-k-2j)\vartheta}\right)^* \right\lfloor_{L^jP^kX} $$
\end{prop}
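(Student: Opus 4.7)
My plan is to reduce the identity to the classical K\"ahler identity on a local K\"ahler representative, via the Lefschetz decomposition.

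The first step is the reduction to primitive forms. Given $\alpha = L^j\beta$ with $\beta \in P^kX$, the intertwining $\de_{\ell\vartheta}\,L^j = L^j\,\de_{(\ell-j)\vartheta}$, obtained from \cite[Equation (2.5)]{vanle-vanzura} by substitution, and its counterpart $\de^c_{\mu\vartheta}\,L^j = L^j\,\de^c_{(\mu-j)\vartheta}$ (valid since $J$ commutes with $L$), together with the formula $\Lambda\,L^j\beta = j\,(n-k-j+1)\,L^{j-1}\beta$ (from \cite[Lemma 2.3]{yan} using $\Lambda\beta=0$), express both members of the identity as algebraic combinations of $\beta$ and of the primitive components of $\de_{(\ell-j)\vartheta}\beta \in \wedge^{k+1}X$. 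On the right-hand side one further invokes the Weyl identity $\left. *\,L^j\right\lfloor_{P^kX}\propto\left. L^{n-k-j}\,J\right\lfloor_{P^kX}$ to account for the shift in the twist from $n+\ell-k$ (in the $j=0$ case) to $n+\ell-k-2j$ in general. Matching the two expansions reduces the proof to the primitive case: for $\beta\in P^kX$,
\begin{equation*}
 \Lambda\,\de_{\ell\vartheta}\beta \;=\; -\bigl(\de^c_{(n+\ell-k)\vartheta}\bigr)^*\beta.
\end{equation*}

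To prove the primitive case, I would work locally. Since $\de\vartheta=0$, on a contractible $U\subseteq X$ one can write $\vartheta=\de f$; then $\tilde\omega := \esp^{-f}\omega$ is a K\"ahler form on $U$, with associated Hermitian metric $\tilde g = \esp^{-f} g$. The pointwise operators of $\tilde\omega$ relate to those of $\omega$ by the conformal rescalings
\begin{equation*}
 \tilde L = \esp^{-f} L, \qquad \tilde\Lambda = \esp^{f} \Lambda, \qquad \left.\tilde{*}\right\lfloor_{\wedge^sU} = \esp^{(s-n)f}\,\left.*\right\lfloor_{\wedge^sU},
\end{equation*}
while the gauge-transform property $\de_{\ell\vartheta} = \esp^{\ell f}\,\de\,\esp^{-\ell f}$ (and its counterpart for $\de^c_{\ell\vartheta}$, since $J$ commutes with multiplication by scalars) relates the twisted differentials to the ordinary ones. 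Applying the classical K\"ahler identity $\tilde\Lambda\,\de\beta = -\bigl(\de^c\bigr)^{*_{\tilde g}}\beta$ to a primitive form $\beta\in P^kU$ (noting $\tilde\Lambda\beta=0$), and translating both sides back through the conformal rescalings --- in particular, expanding $\bigl(\de^c\bigr)^{*_{\tilde g}} = -\tilde{*}\,\de^c\,\tilde{*}$ and using $\tilde\de^c = \de^c$ --- produces the desired identity with twist $n+\ell-k$.

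The main technical obstacle is the bookkeeping of conformal exponents: the factors $\esp^{(s-n)f}$ produced by $\tilde{*}$ on forms of varying degree $s$, the $\esp^{\pm\ell f}$ factors from the gauge transform, and the derivatives of these factors (which produce additional $\vartheta$-wedge terms through $\de(\esp^{mf}) = m\,\esp^{mf}\,\vartheta$) must conspire to yield precisely the twist $n+\ell-k-2j$. Since the resulting identity is independent of the local choice of primitive $f$, it glues to a global identity on $X$.
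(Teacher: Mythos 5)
Your step 2 is correct, and it is a genuinely different route from the paper's. The paper never passes to a local K\"ahler representative: it fixes $\alpha^{(k)}\in P^kX$, writes $\de_{(\ell-j)\vartheta}\alpha^{(k)}=\beta^{(k+1)}+L\beta^{(k-1)}$ by Lefschetz, and then expands \emph{both} sides of the identity in terms of $\beta^{(k\pm1)}$ --- the left side via $[L^j,\Lambda]$, the right side via two applications of the Weyl identity --- checking that the coefficients $-j$ and $n-k-j+1$ agree. That argument is purely pointwise-algebraic in the twisted operators. Your conformal-gauge argument instead imports the classical K\"ahler identity $[\tilde\Lambda,\de]=-(\de^c)^{*_{\tilde g}}$ for $\tilde\omega=\esp^{-f}\omega$ on a chart where $\vartheta=\de f$; the exponent bookkeeping does close up: on $s$-forms one gets $\esp^{(\ell-1)f}\cdot\esp^{(s-n-\ell)f}\cdot\esp^{(n-s+1)f}=1$ and the twist $n+\ell-s$, which equals $n+\ell-k-2j$ on $L^jP^kX$. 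Since both sides of the resulting identity depend only on $(\omega,J,\vartheta)$ and not on $f$, it globalizes. What your approach buys is brevity and a conceptual explanation of where the formula comes from; what the paper's buys is independence from the local integration of $\vartheta$ and from the full strength of the classical K\"ahler identities (it uses only the Weyl identity and the $\mathfrak{sl}(2)$-commutators).

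The genuine problem is your step 1. The claimed ``reduction to the primitive case'' is not a logical reduction: for $j\geq1$ the left-hand side acquires the term $-j\,L^{j-1}\beta^{(k+1)}$, which is absent when $j=0$, so the coefficient of $L^{j-1}\beta^{(k+1)}$ on the right-hand side cannot be deduced from the primitive identity $\Lambda\de_{\ell\vartheta}\beta=-\bigl(\de^c_{(n+\ell-k)\vartheta}\bigr)^*\beta$ together with the intertwining relations you list (note also that the adjoint $\bigl(\de^c_{\mu\vartheta}\bigr)^*$ does not intertwine with $L^j$ by a simple twist shift --- that is precisely what the Weyl identity is needed for). If you carry out the Weyl-identity expansion of the right-hand side at general $j$ in order to ``match'' the two sides, you have reproduced the paper's proof in full and the primitive case is no longer an input. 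The clean fix is to discard step 1 entirely: your step-2 computation nowhere uses primitivity of the form being acted on (the classical K\"ahler identity holds on all forms), so applied directly to an arbitrary $s$-form it proves the stronger statement
\begin{equation*}
\left.\left(\Lambda\de_{\ell\vartheta}-\de_{(\ell-1)\vartheta}\Lambda\right)\right\lfloor_{\wedge^sX}\;=\;\left.-\bigl(\de^c_{(n+\ell-s)\vartheta}\bigr)^*\right\lfloor_{\wedge^sX}\;,
\end{equation*}
of which the Proposition is the restriction to $L^jP^kX\subseteq\wedge^{k+2j}X$.
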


\begin{proof}
 Consider $\alpha^{(k)}\in P^kX$, and consider the Lefschetz decomposition of $\de_{(\ell-j)\vartheta} \alpha^{(k)} \in \wedge^{k+1}X$:
 $$ \de_{(\ell-j)\vartheta} \alpha^{(k)} \;=\; \sum_{h\in\Z} L^h \beta^{(k+1-2h)} $$
 where $\beta^{(k+1-2h)} \in P^{k+1-2h}X$.
 By computing
 $$ 0 \;=\; \de_{(\ell-j+n-k+1)\vartheta} L^{n-k+1} \alpha^{(k)} \;=\; L^{n-k+1} \de_{(\ell-j)\vartheta} \alpha^{(k)} \;=\; \sum_{h\in\Z} L^{n-k+1+h} \beta^{(k+1-2h)} $$
 one gets that $L^{n-k+1+h} \beta^{(k+1-2h)}=0$ for any $h\in\Z$. In particular, since $L^\ell\lfloor_{\wedge^{s}X}$ is injective for $\ell \leq n-s$, see \cite[Corollary 2.8]{yan}, one gets that $\beta^{(k+1-2h)}=0$ for any $h\geq2$. Therefore we reduce to
 $$ \de_{(\ell-j)\vartheta} \alpha^{(k)} \;=\; \beta^{(k+1)} + L \beta^{(k-1)} \;. $$

 We compute:
 \begin{eqnarray*}
  \Lambda \de_{\ell\vartheta} (L^j\alpha^{(k)}) &=& \Lambda L^j \de_{(\ell-j)\vartheta} \alpha^{(k)} \;=\; \Lambda L^j \beta^{(k+1)} + \Lambda L^{j+1} \beta^{(k-1)} \\[5pt]
  &=& L^{j} \Lambda \beta^{(k+1)} - j(k+1-n+j-1) L^{j-1}\beta^{(k+1)} + L^{j+1} \Lambda \beta^{(k-1)} - (j+1)(k-1-n+j) L^{j}\beta^{(k-1)} \\[5pt]
  &=& - j(k-n+j) L^{j-1}\beta^{(k+1)} - (j+1)(k-n+j-1) L^{j}\beta^{(k-1)}
 \end{eqnarray*}
 and
 \begin{eqnarray*}
  \de_{(\ell-1)\vartheta} \Lambda (L^j\alpha^{(k)}) &=& \de_{(\ell-1)\vartheta} L^j \Lambda \alpha^{(k)} - j (k-n+j-1) \de_{(\ell-1)\vartheta} L^{j-1}\alpha^{(k)} \\[5pt]
  &=& - j (k-n+j-1) L^{j-1} \de_{(\ell-j)\vartheta} \alpha^{(k)} \\[5pt]
  &=& - j (k-n+j-1) L^{j-1} \beta^{(k+1)} - j (k-n+j-1) L^{j} \beta^{(k-1)} \;.
 \end{eqnarray*}

 Hence
 $$ \left( \Lambda \de_{\ell\vartheta} - \de_{(\ell-1)\vartheta} \Lambda \right) (L^j\alpha^{(k)}) \;=\; - j L^{j-1} \beta^{(k+1)} + (n-k-j+1) L^j \beta^{(k-1)} \;. $$

 On the other hand, we compute
 \begin{eqnarray*}
  - \left(\de^c_{(n+\ell-k-2j)\vartheta}\right)^* (L^j\alpha^{(k)}) &=& * J^{-1} \de_{(n+\ell-k-2j)\vartheta} J * L^j \alpha^{{(k)}} \\[5pt]
   &=& (-1)^{\frac{k(k+1)}{2}} \frac{j!}{(n-k-j)!} * J^{-1} \de_{(n+\ell-k-2j)\vartheta} J L^{n-k-j} J \alpha^{(k)} \\[5pt]
   &=& (-1)^{\frac{k(k+1)}{2}+k} \frac{j!}{(n-k-j)!} J^{-1} * \de_{(n+\ell-k-2j)\vartheta} L^{n-k-j} \alpha^{(k)} \\[5pt]
   &=& (-1)^{\frac{k(k+1)}{2}+k} \frac{j!}{(n-k-j)!} J^{-1} * L^{n-k-j} \de_{(\ell-j)\vartheta} \alpha^{(k)} \\[5pt]
   &=& (-1)^{\frac{k(k+1)}{2}+k} \frac{j!}{(n-k-j)!} J^{-1} * L^{n-k-j} \beta^{(k+1)} \\[5pt]
   && + (-1)^{\frac{k(k+1)}{2}+k} \frac{j!}{(n-k-j)!} J^{-1} * L^{n-k-j+1} \beta^{(k-1)} \\[5pt]
   &=& (-1)^{\frac{k(k+1)}{2}+k} \frac{j!}{(n-k-j)!} J^{-1} \left((-1)^{\frac{(k+2)(k+1)}{2}} \frac{(n-k-j)!}{(j-1)!} L^{j-1}J\beta^{(k+1)} \right) \\[5pt]
   && + (-1)^{\frac{k(k+1)}{2}+k} \frac{j!}{(n-k-j)!} J^{-1} \left( (-1)^{\frac{(k-1)k}{2}} \frac{(n-k-j+1)!}{j!} L^j J \beta^{(k-1)} \right) \\[5pt]
   &=& (-1)^{\frac{k(k+1)}{2}+k+\frac{(k+2)(k+1)}{2}} j J^{-1} J L^{j-1} \beta^{(k+1)} \\[5pt]
   && + (-1)^{\frac{k(k+1)}{2}+k+\frac{(k-1)k}{2}} (n-k-j+1) J^{-1} J L^j \beta^{(k-1)} \\[5pt]
   &=& - j L^{j-1}\beta^{(k+1)} + (n-k-j+1) L^j \beta^{(k-1)} \;.
 \end{eqnarray*}

Comparing the two expressions, we get the statement.
\end{proof}

\section{Locally conformal balanced structures}\label{sec:lcb}

\noindent Let $X$ be a $2n$-dimensional manifold endowed with a complex structure $J$. A {\em locally conformal balanced} (shortly, {\em \lcb}) structure on $X$ is the datum of a positive real $(1,1)$-form $\omega\in\wedge^{1,1}X\cap\wedge^2X$ (that is, $g:=\omega(\sspace,J\ssspace)$ is a $J$-Hermitian metric on $X$) such that there exists $\vartheta \in \wedge^1X$ satisfying
$$ \de \omega^{n-1} \;=\; \vartheta \wedge \omega^{n-1} \qquad \text{ and } \qquad \de \vartheta \;=\; 0 \;, $$
see, e.g., \cite{medori-tomassini-ugarte}.
Note that a \lcb\ structure is just the datum of a Hermitian metric being locally conformal to a balanced metric.
The form $\vartheta$ is called the associated {\em (balanced) Lee form}.
A {\em globally conformally balanced} (shortly, {\em \gcb}) structure is a \lcb\ structure such that the associated Lee form is $\de$-exact.

Obviously, in dimension $2n=4$, the notions of \lcK\ structure and of \lcb\ structure coincide.
On the other side, if $\omega$ is a \lcK\ structure on $X$, then note that $\de \omega^{n-1} = (n-1)\, \vartheta \wedge \omega^{n-1}$. Therefore we have the following obvious result.

\begin{prop}\label{prop:lck--lcb}
 Let $X$ be a complex manifold. A \lcK\ structure is also \lcb. Furthermore, a \lcK\ structure is \gcK\ if and only if it is \gcb.
\end{prop}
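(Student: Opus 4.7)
The approach is a direct application of the Leibniz rule. For the first statement, I would assume $\omega$ is an \lcK\ structure with Lee form $\vartheta\in\wedge^1X\cap\ker\de$, so that $\de\omega=\vartheta\wedge\omega$. Differentiating $\omega^{n-1}$ by the Leibniz rule gives
$$ \de\omega^{n-1} \;=\; (n-1)\, \de\omega\wedge\omega^{n-2} \;=\; (n-1)\, \vartheta\wedge\omega^{n-1}\;, $$
where $(n-1)\vartheta$ is still $\de$-closed since $\de\vartheta=0$. Hence $\omega$ is \lcb\ with balanced Lee form $(n-1)\vartheta$, which proves the first assertion and simultaneously identifies the balanced Lee form in terms of the K\"ahler one.

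For the second assertion, recall that \gcK\ means that $\vartheta$ is $\de$-exact, while \gcb\ means that $(n-1)\vartheta$ is $\de$-exact. For $n=1$ the real dimension is $2$, every Hermitian metric is K\"ahler, and all the conditions collapse trivially, so the equivalence is vacuous. For $n\geq 2$, multiplication by the nonzero scalar $n-1$ is a bijection of $\wedge^1X$ preserving exact forms, so $\vartheta$ is $\de$-exact if and only if $(n-1)\vartheta$ is; this yields the equivalence \gcK$\,\Leftrightarrow\,$\gcb\ for an \lcK\ structure.

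The proof poses no real obstacle; the only point of care is the bookkeeping of the scalar factor $n-1$ relating the two Lee forms. Note in particular that in the real-four-dimensional case $n=2$ the two Lee forms coincide exactly, consistently with the observation made immediately before the proposition that in dimension $4$ the notions of \lcK\ and \lcb\ already coincide.
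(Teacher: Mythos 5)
Your proposal is correct and matches the paper's own (very brief) argument: the paper simply records the identity $\de\omega^{n-1}=(n-1)\,\vartheta\wedge\omega^{n-1}$ immediately before the proposition and calls the result obvious, which is exactly your Leibniz-rule computation together with the observation that $\vartheta$ is $\de$-exact if and only if $(n-1)\vartheta$ is. Your extra remarks on the trivial case $n=1$ and on the coincidence of the two Lee forms when $2n=4$ are consistent with the paper's preceding discussion.
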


\begin{rem}\label{rem:lcb-conformal}
 We note that the property of being \lcb\ is a conformal property of Hermitian metrics on complex manifolds.
\end{rem}

\begin{rem}
 Let $X$ be $2n$-dimensional manifold endowed with a complex structure. Suppose $n\geq 4$. Consider a Hermitian metric $g$ with associated $(1,1)$-form $\omega$. Fix $s\in\{2, \ldots, n-2\}$. Suppose that there exists $\vartheta_{(s)} \in \wedge^1X$ such that
 $$ \de \omega^{s} \;=\; \vartheta_{(s)} \wedge \omega^{s} \qquad \text{ with } \qquad \de \vartheta_{(s)} \;=\; 0 \;. $$
 Then we can write this equality as
 $$ \left( s\, \de \omega - \vartheta_{(s)}\wedge \omega \right) \wedge \omega^{s-1} \;=\; 0 \;. $$
 Since the map $L_{\omega}\colon \wedge^qX \to \wedge^{q+2}X$ is injective for any $q\leq n-1$, and $L_{\omega^{s-1}}\colon \wedge^{n-s+1}X \to \wedge^{n+s-1}X$ is an isomorphism, see, \cite[Corollary 2.8, Corollary 2.7]{yan}, we have that $L_{\omega^{s-1}}\colon \wedge^{q}X \to \wedge^{q+2s-2}X$ is injective for every $q\leq n-s+1$. Since $s\, \de \omega - \vartheta_{(s)}\wedge \omega$ is a $3$-form in the kernel of $L_{\omega^{s-1}}$, and since $3\leq n-s+1$, then necessarily $\de \omega = \left(\frac{1}{s}\, \vartheta_{(s)}\right) \wedge \omega$, i.e., the structure $\omega$ is lcK.
\end{rem}

\begin{rem}
 Let $X$ be $2n$-dimensional compact manifold endowed with a complex structure $J$. Consider a \lcb\ structure $\omega$ on $X$, and let $\vartheta\in\wedge^1X$ be the $\de$-closed $1$-form such that $\de\omega^{n-1}=\vartheta\wedge\omega^{n-1}$. Consider the $J$-Hermitian metric $g:=\omega(\sspace,J\ssspace)$ associated to $\omega$, and denote its Levi Civita connection by $\nabla^{LC}$.
 Note that, if $\nabla^{LC}\vartheta=0$, then in particular $\vartheta$ is harmonic with respect to $\omega$. In particular, $g$ is also a Gauduchon metric. More precisely, by \cite[Th\'{e}or\`{e}me 1]{gauduchon}, in the conformal class of any \lcb\ structure, there is at most one \lcb\ structure with parallel Lee form, being the Gauduchon metric.
\end{rem}

\subsection{Locally conformal balanced structures and \texorpdfstring{$\partial\overline{\partial}$}{partiloverlinepartial}-Lemma}\label{subsec:lcb-deldelbar}

As in \cite[Theorem 2.1, Remark (1)]{vaisman}, we have the following result.

\begin{thm}\label{thm:deldelbar-lcb--gcb}
 Let $X$ be a $2n$-dimensional compact manifold endowed with a complex structure $J$ such that the natural map $H^{n-1,n}_{BC}(X) \to H^{n-1,n}_{\delbar}(X)$ induced by the identity is injective. Then any \lcb\ structure is also \gcb.
\end{thm}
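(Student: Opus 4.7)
First, I will replace $\omega$ by its Gauduchon representative $\omega_0 := \esp^{-\lambda}\omega$, for some real smooth function $\lambda$ provided by \cite[Th\'eor\`eme 1]{gauduchon}, satisfying $\del\delbar\omega_0^{n-1}=0$. Since being \lcb\ is conformally invariant (Remark \ref{rem:lcb-conformal}), $\omega_0$ is itself \lcb, with Lee form $\vartheta_0 := \vartheta - (n-1)\de\lambda$; it suffices to show $\vartheta_0=0$, for then $\omega_0$ is balanced and $\vartheta=(n-1)\de\lambda$ is $\de$-exact, so that $\omega$ is \gcb.

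Type-decomposing $\de\omega_0^{n-1}=\vartheta_0\wedge\omega_0^{n-1}$ gives $\delbar\omega_0^{n-1}=\vartheta_0^{0,1}\wedge\omega_0^{n-1}$. This $(n-1,n)$-form is manifestly $\delbar$-closed and $\delbar$-exact; the Gauduchon condition makes it additionally $\del$-closed, so it defines a class in $H^{n-1,n}_{BC}(X)$ whose image in $H^{n-1,n}_{\delbar}(X)$ vanishes. The injectivity hypothesis then produces $\eta\in\wedge^{n-2,n-1}X$ with $\delbar\omega_0^{n-1}=\del\delbar\eta$.

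Now I wedge with $\vartheta_0^{1,0}$ and integrate. Applying Stokes' theorem to the $(2n-1)$-form $\vartheta_0^{1,0}\wedge\delbar\eta$ and using $\del\vartheta_0^{1,0}=0$ (the $(2,0)$-component of $\de\vartheta_0=0$), one obtains
$$ \int_X \vartheta_0^{1,0}\wedge\vartheta_0^{0,1}\wedge\omega_0^{n-1} \;=\; \int_X \vartheta_0^{1,0}\wedge\del\delbar\eta \;=\; \int_X \del\vartheta_0^{1,0}\wedge\delbar\eta \;=\; 0. $$
Multiplying by $\im$, the integrand $\im\,\vartheta_0^{1,0}\wedge\vartheta_0^{0,1}\wedge\omega_0^{n-1}$ is pointwise a non-negative multiple of the volume form $\omega_0^n$, because $\im\,\vartheta_0^{1,0}\wedge\overline{\vartheta_0^{1,0}}$ is a positive semi-definite $(1,1)$-form. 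Hence $\vartheta_0^{1,0}=0$ pointwise and $\vartheta_0=0$, as desired.

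I expect the main conceptual hurdle to be pinning down the correct $(n-1,n)$-form on which to invoke the Bott--Chern/Dolbeault injectivity: the Gauduchon reduction is precisely what endows $\delbar\omega_0^{n-1}$ with a well-defined Bott--Chern class, after which the integration by parts against $\vartheta_0^{1,0}$ and the standard positivity of $\im\,\alpha\wedge\overline{\alpha}\wedge\omega^{n-1}$ for $(1,0)$-forms $\alpha$ close the argument.
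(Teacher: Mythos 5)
Your proposal is correct and follows essentially the same route as the paper's proof: pass to the Gauduchon representative, observe that $\delbar\omega_0^{n-1}$ defines a Bott--Chern class killed in Dolbeault cohomology, use the injectivity hypothesis to write it as $\del\delbar\eta$, wedge with $\vartheta_0^{1,0}$, apply Stokes, and conclude from the pointwise positivity of $\im\,\vartheta_0^{1,0}\wedge\overline{\vartheta_0^{1,0}}\wedge\omega_0^{n-1}$. The only difference is cosmetic: the paper verifies the positivity claim by an explicit computation in an $\omega_0$-diagonalizing coframe, whereas you invoke it as a standard fact.
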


\begin{proof}
 Consider a \lcb\ structure $\tilde{\omega}$ on $X$, and denote by $\tilde{g}:=\tilde{\omega}(\sspace,J\ssspace)$ its associated $J$-Hermitian metric. By \cite[Th\'{e}or\`{e}me 1]{gauduchon}, in the conformal class of $\tilde{g}$, there exists a metric $g:=\exp(f)\tilde{g}$ being Gauduchon, where $f\in\mathcal{C}^\infty(X;\R)$ is a smooth real function on $X$. That is to say, the $(1,1)$-form $\omega=\exp(f)\tilde{\omega}$ associated to $g$ satisfies $\del\delbar\omega^{n-1}=0$.

 Consider the form $\delbar\omega^{n-1} \in \wedge^{n-1,n}X$. Being $\del$-closed and $\delbar$-closed, it defines a class in $H^{n-1,n}_{BC}(X)$. The form $\delbar\omega^{n-1}$ being $\delbar$-exact, its class in the Bott-Chern cohomology maps to the zero class in the Dolbeault cohomology group $H^{n-1,n}_{\delbar}(X)$ under the natural map induced by the identity. By the hypothesis, it follows that $\left[\delbar\omega^{n-1}\right] \in H^{n-1,n}_{BC}(X)$ is the zero class in the Bott-Chern cohomology, that is, there exists $\eta\in\wedge^{n-2,n-1}X$ such that
 $$ \delbar\omega^{n-1} \;=\; -\del\delbar\eta \;. $$

 The structure $\omega$ is still \lcb, see Remark \ref{rem:lcb-conformal}. That is, there exists a $\de$-closed $1$-form $\vartheta\in\wedge^1 X$ such that $\de \omega^{n-1} = \vartheta \wedge \omega^{n-1}$. Consider the splitting $\vartheta = \vartheta^{1,0} + \overline{\vartheta^{1,0}}$, where $\vartheta^{1,0} \in \wedge^{1,0}X$. Note that, $\vartheta$ being $\de$-closed, then $\del\vartheta^{1,0}=0$. In particular, we have
 $$ \delbar \omega^{n-1} \;=\; \overline{\vartheta^{1,0}} \wedge \omega^{n-1} \;. $$

 By comparing the two expressions for $\delbar\omega^{n-1}$ and by wedging with $\vartheta^{1,0}$, we get
 $$ \vartheta^{1,0} \wedge \overline{\vartheta^{1,0}} \wedge \omega^{n-1} \;=\; \de \left( \delbar\eta \wedge \vartheta^{1,0} \right) \;, $$
 since $\del\vartheta^{1,0}=0$ and $\delbar\eta\wedge\vartheta^{1,0}\in\wedge^{n-1,n}X$.

 We claim that
 $$ \im\, \vartheta^{1,0} \wedge \overline{\vartheta^{1,0}} \wedge \omega^{n-1} \;=\; \varphi\, \omega^n $$
 for a smooth real non-negative function $\varphi$ on $X$, and that $\varphi$ is zero at every point if and only if $\vartheta^{1,0}=0$.
 Indeed, since $\omega$ is non-degenerate, there exists a unique smooth real function $\varphi$ on $X$ such that $\im\, \vartheta^{1,0} \wedge \overline{\vartheta^{1,0}} \wedge \omega^{n-1} = \varphi\, \omega^n$, and it suffices to prove that $\varphi$ is pointly non-negative and that $\varphi(x)=0$ at a point $x\in X$ if and only if $\vartheta^{1,0}\lfloor_{x}=0$.
 Fix a point $x\in X$ and consider a basis $\left\{ \tau^1, \ldots, \tau^n \right\}$ of the $\C$-vector space $\left(T^{1,0}_xX\right)^*$ such that $\omega\lfloor_{x} = \im\, \sum_{j=1}^{n} A_j\, \tau^j \wedge \bar\tau^j$ with $A_j>0$. Let $\alpha_1,\ldots,\alpha_n\in\C$ be such that $\left. \vartheta^{1,0} \right\lfloor_{x} = \sum_{k=1}^{n} \alpha_k\, \tau^k$. Then we compute
 $$ \left. \left( \im\, \vartheta^{1,0} \wedge \overline{\vartheta^{1,0}} \wedge \omega^{n-1} \right) \right\lfloor_{x}
 \;=\; \im^n\, n!\cdot \left(\prod_{j=1}^{n}A_j\right)\cdot\left(\sum_{k=1}^{n} \frac{\left|\alpha_k\right|^2}{A_k} \tau^1 \wedge \overline{\tau^1} \wedge \cdots \wedge \tau^n \wedge \overline{\tau^n} \right)
 \;=\; \sum_{k=1}^{n} \frac{\left|\alpha_k\right|^2}{A_k} \, \left. \left( \omega^n \right) \right\lfloor_x \;. $$
 This proves the claim.

 Now, by the Stokes theorem, we get
 $$ 0 \;=\; \im\, \int_X \de \left( \delbar\eta \wedge \vartheta^{1,0} \right) \;=\; \int_X \im\, \vartheta^{1,0} \wedge \overline{\vartheta^{1,0}} \wedge \omega^{n-1} \;=\; \int_X \varphi\, \omega^n \;. $$
 Hence $\varphi$ is zero at every point, from which it follows that $\vartheta^{1,0}=0$. This means that $g$ is actually balanced, and hence $\tilde{\omega}$ is a \gcb\ structure.
\end{proof}

\begin{rem}
 Note that the property that the natural map $H^{n-1,n}_{BC}(X) \to H^{n-1,n}_{\delbar}(X)$ induced by the identity is injective in Theorem \ref{thm:deldelbar-lcb--gcb} is weaker than the property of satisfying $\del\delbar$-Lemma. For example, consider the completely-solvable Nakamura manifold endowed with the complex structure in case {\itshape (ii)} as in \cite[Example 1]{kasuya-mathz} (where it is denoted as case {\itshape (B)}), and \cite[Example 2.17]{angella-kasuya-1}, see \cite[Example 1]{nakamura}. Its Dolbeault cohomology is computed at \cite[page 445]{kasuya-mathz}, and its Bott-Chern cohomology is computed in \cite[Table 5]{angella-kasuya-1}. It follows that it does not satisfy the $\del\delbar$-Lemma, but the natural map $H^{2,3}_{BC}(X) \to H^{2,3}_{\delbar}(X)$ is an isomorphism.
\end{rem}

As a corollary, we get the following, to be compared with \cite[Theorem 2.1, Remark (1)]{vaisman}.

\begin{prop}\label{prop:deldelbar-lck--gck}
 Let $X$ be a compact complex manifold of complex dimension $n$ such that the natural map $H^{n-1,n}_{BC}(X) \to H^{n-1,n}_{\delbar}(X)$ induced by the identity is injective. Then any \lcK\ structure is also \gcK.
\end{prop}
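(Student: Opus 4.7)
The plan is to obtain this proposition as an immediate corollary of Proposition \ref{prop:lck--lcb} and Theorem \ref{thm:deldelbar-lcb--gcb}, rather than redoing the cohomological argument from scratch. The underlying observation is that the entire argument for the balanced case already produces the conclusion for the K\"ahler case once one knows that the \lcK\ and \lcb\ notions interact in the ``nice'' way described by Proposition \ref{prop:lck--lcb}, namely that any \lcK\ structure is \lcb\ and, conversely, any \lcK\ structure that is \gcb\ is automatically \gcK.

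First I would start with an arbitrary \lcK\ structure $\omega \in \wedge^{1,1}X \cap \wedge^2X$ on $X$ with Lee form $\vartheta \in \wedge^1 X$, so that $\de\omega = \vartheta \wedge \omega$ and $\de\vartheta = 0$. Wedging this identity with $\omega^{n-2}$ gives $\de \omega^{n-1} = (n-1)\, \vartheta \wedge \omega^{n-1}$, with the same $\de$-closed $1$-form $(n-1)\vartheta$, so $\omega$ is in particular an \lcb\ structure in the sense of Section \ref{sec:lcb}; this is the content of Proposition \ref{prop:lck--lcb}.

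Second, I would invoke Theorem \ref{thm:deldelbar-lcb--gcb}: the hypothesis on the injectivity of $H^{n-1,n}_{BC}(X) \to H^{n-1,n}_{\delbar}(X)$ is exactly the one assumed there, so the \lcb\ structure $\omega$ is in fact \gcb. Equivalently, up to rescaling $\omega$ within its conformal class by a positive smooth function, the balanced Lee form $(n-1)\vartheta$ is $\de$-exact, which forces $\vartheta$ itself to be $\de$-exact (since $n \geq 2$; in complex dimension $n=1$ the statement is empty).

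Finally, I would apply the second half of Proposition \ref{prop:lck--lcb}: since $\omega$ is both \lcK\ and \gcb, it is \gcK, which is precisely the claim. No single step here is genuinely hard; the only mild subtlety is the purely formal check that the passage $\vartheta \mapsto (n-1)\vartheta$ does not destroy the equivalence ``$\vartheta$ is $\de$-exact $\Leftrightarrow$ $(n-1)\vartheta$ is $\de$-exact'', which is obvious for $n \geq 2$. All the analytic work—the Gauduchon representative, the Bott-Chern obstruction, and the positivity computation $\im\, \vartheta^{1,0} \wedge \overline{\vartheta^{1,0}} \wedge \omega^{n-1} = \varphi\, \omega^n$ with $\varphi \geq 0$—is already absorbed into Theorem \ref{thm:deldelbar-lcb--gcb}.
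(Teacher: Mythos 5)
Your proposal is correct and coincides with the paper's own proof, which likewise derives the statement by combining Proposition \ref{prop:lck--lcb} (a \lcK\ structure is \lcb, and it is \gcK\ if and only if it is \gcb) with Theorem \ref{thm:deldelbar-lcb--gcb}. The extra remark about the harmlessness of the rescaling $\vartheta \mapsto (n-1)\vartheta$ is a fine (if implicit in the paper) clarification.
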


\begin{proof}
 Let $\omega$ be a \lcK\ structure on the complex manifold $X$. By Proposition \ref{prop:lck--lcb}, $\omega$ is also \lcb. By Theorem \ref{thm:deldelbar-lcb--gcb}, $\omega$ is \gcb. Again by Proposition \ref{prop:lck--lcb}, it follows that $\omega$ is actually \gcK.
\end{proof}

\medskip

We note that, up to our knowledge, the known examples of manifolds satisfying the $\del\delbar$-Lemma are actually balanced: K\"ahler manifolds are clearly balanced; manifolds in class $\mathcal{C}$ of Fujiki, \cite{fujiki}, and Mo\v\i\v shezon manifolds, \cite{moishezon}, are balanced by \cite[Corollary 5.7]{alessandrini-bassanelli}; twistor spaces \cite{penrose, atiyah-hitchin-singer} are balanced by \cite[Proposition 11]{gauduchon-annsns}, and their small deformations are balanced by \cite[Corollary 9]{fu-yau}; small deformations of a complex manifold that admits a balanced metric and satisfies the $\del\delbar$-Lemma still admit balanced metrics by \cite[Theorem 5.13]{wu}, see also \cite[Theorem 6]{fu-yau}. (See also \cite{popovici-annsns} for a recent survey.)
One can ask the following.

\begin{question}\label{conj:deldelbar--lcb}
 Does every compact complex manifold satisfying the $\del\delbar$-Lemma admit a \lcb\ structure?
\end{question}

\subsection{Locally conformal balanced structures and \texorpdfstring{$(n-1,n)$}{(n-1,n)}-th weak-\texorpdfstring{$\partial\overline{\partial}$}{partiloverlinepartial}-Lemma}

In \cite[Definition 5]{fu-yau}, a compact complex manifold $X$ of complex dimension $n$ is said to {\em satisfy the $(n-1,n)$-th weak $\del\delbar$-Lemma}
if for each real form $\alpha$ of type $(n-1,n-1)$ such that $\delbar\alpha$ is $\del$-exact there exists a $(n-2,n-1)$-form $\beta$ such that $\delbar\alpha=\im\,\del\delbar\beta$.

\medskip

The hypothesis that the natural map $H^{n-1,n}_{BC}(X) \to H^{n-1,n}_{\delbar}(X)$ induced by the identity is injective in Theorem \ref{thm:deldelbar-lcb--gcb} implies in particular the $(n-1,n)$-th weak $\del\delbar$-Lemma. Next we see that the converse is not true. Notice that it is proven in \cite[Corollary 3.5]{ugarte-villacampa} that any left-invariant Abelian complex structure on a nilmanifold satisfies the $(n-1,n)$-th weak $\del\delbar$-lemma.

\begin{prop}\label{prop:weak-lemma-for-abelian}
Let $X$ be a $2n$-dimensional nilmanifold endowed with a left-invariant Abelian complex structure.
Then $X$ satisfies the $(n-1,n)$-th weak $\del\delbar$-Lemma, but the natural map $H^{n-1,n}_{BC}(X) \to H^{n-1,n}_{\delbar}(X)$ induced by the identity is never injective.
\end{prop}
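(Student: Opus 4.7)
The first assertion is the cited \cite[Corollary 3.5]{ugarte-villacampa}, so my plan concentrates on the non-injectivity of the natural map $\nu\colon H^{n-1,n}_{BC}(X)\to H^{n-1,n}_{\delbar}(X)$. Here the statement is to be understood for non-toral $X$, i.e.\ when the underlying nilpotent Lie algebra $\g$ is non-abelian; in the toral case both cohomologies coincide with de Rham cohomology and $\nu$ is tautologically an isomorphism.

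The strategy is to pass to the dual picture. Setting $H^{\bullet,\bullet}_{A}(X):=\frac{\ker\del\delbar}{\imm\del+\imm\delbar}$ for the Aeppli cohomology, Serre duality gives $H^{n-1,n}_{\delbar}(X)\cong H^{1,0}_{\delbar}(X)^{*}$ and Schweitzer's Bott--Chern/Aeppli duality gives $H^{n-1,n}_{BC}(X)\cong H^{1,0}_{A}(X)^{*}$, with identifications compatible with the identity-induced maps. Hence $\nu$ is dual to the natural map $\mu\colon H^{1,0}_{\delbar}(X)\to H^{1,0}_{A}(X)$, so $\nu$ is injective if and only if $\mu$ is surjective; I therefore aim to show that $\mu$ is not surjective.

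By a Console--Fino-type theorem, the Dolbeault cohomology of a nilmanifold with left-invariant Abelian complex structure is computed by invariant forms, and the same holds for Bott--Chern and Aeppli cohomologies (via subsequent extensions by Angella and others). I can therefore work at the invariant level. The key observation is that for any invariant $(1,0)$-form $\phi$, the Abelian condition on $J$ forces $d\phi\in\wedge^{1,1}X$; in particular $\del\phi=0$ and $\delbar\phi=d\phi$. Splitting the identity $d(d\phi)=0$ into its $(2,1)$- and $(1,2)$-parts yields $\del d\phi=0=\delbar d\phi$, and in particular $\del\delbar\phi=0$ \emph{automatically}. Consequently, at the invariant level $H^{1,0}_{A}(X)$ coincides with the entire space of invariant $(1,0)$-forms (the quotient being trivial since $\del$ kills invariant functions and $\wedge^{1,-1}X=0$), whereas $H^{1,0}_{\delbar}(X)$ is the subspace cut out by $d\phi=0$.

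Under these identifications $\mu$ is the inclusion of $d$-closed invariant $(1,0)$-forms into all invariant $(1,0)$-forms. Its surjectivity would force $d\phi=0$ for every invariant $\phi\in\wedge^{1,0}X$, and by complex conjugation for every invariant $1$-form, contradicting the non-abelianness of $\g$. This non-surjectivity of $\mu$ delivers the desired non-injectivity of $\nu$. The main technical point I anticipate is justifying the reduction to invariant forms for $H^{1,0}_{A}(X)$ (equivalently, for $H^{n-1,n}_{BC}(X)$): while this is classical for Dolbeault cohomology in the Abelian setting, the Bott--Chern/Aeppli analogues require the appropriate symmetrization argument, which has to be invoked carefully.
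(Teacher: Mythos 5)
Your proof is correct, but it takes a genuinely different route from the paper's. The paper works directly in bidegree $(n-1,n)$ at the invariant level: for an Abelian $J$ one has $\del\left(\wedge^{n-1,n}\g^*\right)=\del\left(\wedge^{n-2,n}\g^*\right)=\{0\}$, hence $\delbar\left(\wedge^{n-1,n-1}\g^*\right)\subseteq\ker\del$ and $\del\delbar\left(\wedge^{n-2,n-1}\g^*\right)=\{0\}$, so injectivity of the map reduces to the single condition $\delbar\left(\wedge^{n-1,n-1}\g^*\right)=\{0\}$, which is then violated by the explicit form $\omega^2\wedge\bar\omega^2\wedge\cdots\wedge\omega^n\wedge\bar\omega^n$ in a suitably adapted co-frame. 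You instead dualize: via Serre and Schweitzer duality the map in question is the transpose of the natural map $H^{1,0}_{\delbar}(X)\to H^{1,0}_{A}(X)$, and at the invariant level the Abelian condition makes the target all of $\wedge^{1,0}\g^*$ (since $\del\phi=0$ and $\del\delbar\phi=0$ hold automatically) while the source is only $\ker\de\cap\wedge^{1,0}\g^*$, so surjectivity fails exactly when $\g$ is non-abelian. Your route is more conceptual --- it traces the failure of injectivity to the elementary fact that not every invariant $(1,0)$-form is closed --- at the price of invoking the two duality theorems; the paper's route is more hands-on and exhibits an explicit nonzero Bott--Chern class killed by the map. The technical point you flag, namely the reduction to invariant forms for the Aeppli (equivalently Bott--Chern) group, is handled by the paper with exactly the references you would need, \cite[Remark 4]{console-fino} and \cite[Theorem 3.8]{angella-1}, so it is not a gap. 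Finally, both arguments (yours explicitly, the paper's implicitly through the phrase ``if $\g$ is not an Abelian Lie algebra'') must exclude the torus, for which the map is of course injective; your reading of the statement is the intended one.
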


\begin{proof}
The validity of the $(n-1,n)$-th weak $\del\delbar$-Lemma is proven in \cite[Corollary 3.5]{ugarte-villacampa}.

For nilmanifolds endowed with left-invariant complex structures, the inclusion of left-invariant forms induces isomorphisms in Dolbeault, Bott-Chern, and Aeppli cohomologies, \cite[Remark 4]{console-fino}, \cite[Theorem 3.8]{angella-1}. Therefore, the injectivity of the map $H^{n-1,n}_{BC}(X) \to H^{n-1,n}_{\delbar}(X)$ can be checked at the Lie algebra level, and it is equivalent to
$$
\delbar\left(\wedge^{n-1,n-1}\mathfrak{g}^*\right) \cap \ker \del \;\subseteq\; \del\delbar\left(\wedge^{n-2,n-1}\mathfrak{g}^*\right) \;.
$$
Since $J$ is Abelian, we have $\del\left(\wedge^{n-1,n}\mathfrak{g}^*\right)=\{0\}$ and $\del\left(\wedge^{n-2,n}\mathfrak{g}^*\right)=\{0\}$, and thus also $\delbar\left(\wedge^{n-1,n-1}\mathfrak{g}^*\right) \subseteq \ker \del$ and
$\del\delbar\left(\wedge^{n-2,n-1}\mathfrak{g}^*\right)=\{0\}$.
This reduces the injectivity of the map $H^{n-1,n}_{BC}(X) \to H^{n-1,n}_{\delbar}(X)$
to the following condition:
$$ \delbar\left(\wedge^{n-1,n-1}\mathfrak{g}^*\right) \;=\; \{0\} \;. $$
But, if $\mathfrak{g}$ is not an Abelian Lie algebra, this cannot happen. Indeed, we can choose a basis $\left\{\omega^j\right\}_{j\in\{1,\ldots,n\}}$ of $\wedge^{1,0}\mathfrak{g}^*$ such that
$$ \delbar \omega^1 \;=\; \cdots \;=\; \delbar \omega^{r-1} \;=\; 0 \;, \qquad \delbar \omega^r \;=\; \omega^{1}\wedge\bar\omega^{1} \;, \qquad \delbar \omega^{r+j} \;\in\; \wedge^{2}\left\langle \omega^2, \bar\omega^2, \ldots, \omega^n, \bar\omega^n\right\rangle \text{ for } j>0\;, $$
for some $r\in\{2,\ldots,n\}$.
Therefore $\delbar\left(\omega^2\wedge\bar\omega^2\wedge\cdots\wedge\omega^n\wedge\bar\omega^n\right)\neq 0$.
\end{proof}

In the following result we prove that Theorem \ref{thm:deldelbar-lcb--gcb} cannot be extended to compact complex manifolds satisfying the $(n-1,n)$-th weak $\del\delbar$-Lemma.

\begin{prop}\label{prop:Jabel-weak-lcb}
 There exist compact complex manifolds $X$ of complex dimension $n$ satisfying the $(n-1,n)$-th weak $\del\delbar$-Lemma and having \lcb\ metrics, but not admitting any  balanced metric.
\end{prop}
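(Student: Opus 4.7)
The plan is to exhibit the Kodaira--Thurston surface as a concrete witness. Consider the $4$-dimensional nilmanifold $X=\Gamma\backslash(H_3\times\R)$, whose Lie algebra $\mathfrak{h}_3\oplus\R$ carries a basis $\{e_1,e_2,e_3,e_4\}$ with the only non-trivial bracket $[e_1,e_2]=e_3$. The key algebraic input is that this Lie algebra supports a left-invariant Abelian complex structure: setting $Je_1=-e_2$, $Je_2=e_1$, $Je_3=e_4$, $Je_4=-e_3$, one checks both $J^2=-\id$ and the Abelian identity $[JX,JY]=[X,Y]$ on all basis pairs (integrability of $J$ then follows automatically from the Abelian identity). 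By Proposition~\ref{prop:weak-lemma-for-abelian}, $X$ endowed with $J$ satisfies the $(1,2)$-th weak $\del\delbar$-Lemma.

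For the positive half of the statement, I would produce a \lcK\ metric, which is automatically \lcb\ by Proposition~\ref{prop:lck--lcb}. The dual structure equations read $de^1=de^2=de^4=0$ and $de^3=-e^1\wedge e^2$. Take the left-invariant positive $(1,1)$-form $\omega:=-e^1\wedge e^2+e^3\wedge e^4$; a direct computation gives $d\omega=-e^1\wedge e^2\wedge e^4=e^4\wedge\omega$, and since $de^4=0$, the form $\omega$ is \lcK\ with Lee form $\vartheta:=e^4$, hence \lcb.

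For the negative half, observe that in complex dimension $n=2$ a balanced metric satisfies $d\omega=0$ and is therefore K\"ahler; but $b_1(X)=3$ is odd, so $X$ admits no K\"ahler and hence no balanced metric. The only step requiring genuine insight is producing the Abelian complex structure on $\mathfrak{h}_3\oplus\R$ in the first paragraph; every remaining step reduces either to a routine structure-equation computation, to the classical odd-first-Betti-number obstruction, or to propositions established earlier in the excerpt.
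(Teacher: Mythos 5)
Your proof is correct, but it uses a genuinely different witness from the paper's. The paper takes $X$ to be a $6$-dimensional nilmanifold with underlying Lie algebra $\mathfrak{h}_8$, $\mathfrak{h}_9$, or $\mathfrak{h}_{15}$ endowed with an Abelian complex structure, and then quotes \cite[Proposition 1.1]{medori-tomassini-ugarte} for the existence of left-invariant \lcb\ structures and \cite[Theorem 26]{ugarte} for the non-existence of balanced metrics. You instead take the Kodaira--Thurston surface, exhibit the \lcK\ (hence, by Proposition \ref{prop:lck--lcb}, \lcb) form $\omega=-e^1\wedge e^2+e^3\wedge e^4$ explicitly, and rule out balanced metrics by the elementary remark that in complex dimension $2$ balanced means K\"ahler, which is excluded by $b_1=3$. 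Your computations check out: $J$ is Abelian (hence integrable), $\omega$ is a positive $(1,1)$-form, $\de\omega=e^4\wedge\omega$ with $\de e^4=0$, and the weak $\del\delbar$-Lemma is supplied by Proposition \ref{prop:weak-lemma-for-abelian} just as in the paper. What your route buys is self-containedness: apart from that proposition, no external input is needed. What it gives up is relevance to the regime the proposition is meant to probe: in complex dimension $2$ one has \lcb\ $=$ \lcK\ and balanced $=$ K\"ahler, so your example merely re-records the classical fact that the Kodaira--Thurston surface is \lcK\ but not K\"ahler, whereas the paper's $6$-dimensional examples show the failure of Theorem \ref{thm:deldelbar-lcb--gcb} under the weak $\del\delbar$-Lemma where the balanced condition is strictly weaker than the K\"ahler one. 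A caveat common to both arguments, slightly more visible in yours: Proposition \ref{prop:weak-lemma-for-abelian} rests on \cite[Corollary 3.5]{ugarte-villacampa}, a result about $6$-dimensional nilmanifolds, so its use in real dimension $4$ should be justified (for your surface this is easy at the invariant level, since there $\del\left(\wedge^{0,2}\mathfrak{g}^*\right)=\{0\}$ forces $\delbar\alpha=0$ under the hypothesis of the weak Lemma, but the reduction to invariant forms deserves a word).
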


\begin{proof}
Because of Proposition \ref{prop:weak-lemma-for-abelian}, we consider $X$ a nilmanifold of dimension $6$
with underlying Lie algebra $\mathfrak{h}_8$, $\mathfrak{h}_9$, or $\mathfrak{h}_{15}$, and endowed with an Abelian complex structure.
It is proven in \cite[Proposition 1.1]{medori-tomassini-ugarte} that $X$ has left-invariant \lcb\ structures; however $X$ does not admit any balanced metric by \cite[Theorem 26]{ugarte}.
\end{proof}

\subsection{Locally conformal balanced and \texorpdfstring{$k$}{k}-Gauduchon structures}\label{subsec:kG}

Let $X$ be a $2n$-dimensional manifold endowed with a complex structure. We recall that, for $k\in\{1,\ldots,n-1\}$, a {\em $k$-Gauduchon} (shortly, {\em \kG}) structure on $X$ is the datum of the $(1,1)$-form $\omega$ associated to a Hermitian metric on $X$ such that
$$ \del\delbar\omega^k\wedge\omega^{n-k-1} \;=\; 0 \;, $$
see \cite[Definition 1]{fu-wang-wu}. Note that $(n-1)$-Gauduchon structures are associated to Gauduchon metrics, \cite{gauduchon}, and that, for $n=2$, Gauduchon metrics are $1$-Gauduchon, as well as pluriclosed, \cite{bismut}. See also \cite{ivanov-papadopulos, fino-ugarte} for further generalizations and results.

\medskip

By \cite[Proposition 1.4]{fino-parton-salamon}, see also \cite[Remark 1]{alexandrov-ivanov}, on a compact complex manifold, Hermitian metrics being both pluriclosed and balanced are in fact K\"ahler. By \cite[Proposition 2.4]{fino-ugarte}, on a compact complex manifold, Hermitian metrics being both \uG\ and balanced are in fact K\"ahler. As a sort of generalization, also in view of Question \ref{conj:deldelbar--lcb}, we have the following result.

\begin{prop}\label{prop:deldelbar-ug-lcb--gck}
 Let $X$ be a compact complex manifold of complex dimension $n$ such that the natural map $H^{n-1,n}_{BC}(X) \to H^{n-1,n}_{\delbar}(X)$ induced by the identity is injective. Then, for $k\in\{1, \ldots, n-2\}$, any \kG\ \lcb\ structure is also \gcK.
\end{prop}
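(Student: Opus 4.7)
The plan is to use Theorem \ref{thm:deldelbar-lcb--gcb} to reduce to the balanced case, and then to show via Stokes and Hodge--Riemann positivity that the balanced representative in the conformal class must actually be K\"ahler.

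First, by Theorem \ref{thm:deldelbar-lcb--gcb} the lcb structure $\omega$ is in fact gcb, so there exist $f\in\mathcal{C}^\infty(X;\R)$ and a balanced $(1,1)$-form $\omega_b$ (i.e.\ $\de\omega_b^{n-1}=0$) with $\omega=e^{f}\omega_b$; the Lee form of $\omega$ is $\vartheta=(n-1)\de f$. Setting $\psi:=kf$, I would expand $\del\delbar\omega^{k}\wedge\omega^{n-k-1}$ via the Leibniz rule. The mixed terms $\del\psi\wedge\delbar\omega_b^{k}\wedge\omega_b^{n-k-1}$ and $\delbar\psi\wedge\del\omega_b^{k}\wedge\omega_b^{n-k-1}$ vanish because the balanced condition forces $\del\omega_b\wedge\omega_b^{n-2}=\delbar\omega_b\wedge\omega_b^{n-2}=0$. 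For the pure $\omega_b$-piece, applying $\del$ to $\delbar\omega_b\wedge\omega_b^{n-2}=0$ yields the identity $\del\delbar\omega_b\wedge\omega_b^{n-2}=-(n-2)\,\del\omega_b\wedge\delbar\omega_b\wedge\omega_b^{n-3}$, and the standard expansion $\del\delbar\omega_b^{k}=k\,\del\delbar\omega_b\wedge\omega_b^{k-1}+k(k-1)\,\del\omega_b\wedge\delbar\omega_b\wedge\omega_b^{k-2}$ then collapses the pure piece to $-k(n-k-1)\,\del\omega_b\wedge\delbar\omega_b\wedge\omega_b^{n-3}$. Recognising $\del\psi\wedge\delbar\psi+\del\delbar\psi=e^{-\psi}\del\delbar e^{\psi}$ and dividing out $e^{(n-1)f}$, the $k$-Gauduchon condition takes the form
\[
e^{-\psi}\del\delbar e^{\psi}\wedge\omega_b^{n-1}\;=\;k(n-k-1)\,\del\omega_b\wedge\delbar\omega_b\wedge\omega_b^{n-3}.
\]

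Next I would integrate over $X$. Since $\omega_b$ is balanced, $\int_X\del\delbar\psi\wedge\omega_b^{n-1}=\int_X\de(\delbar\psi\wedge\omega_b^{n-1})=0$ by Stokes, so the left-hand side integrates to $\int_X\del\psi\wedge\delbar\psi\wedge\omega_b^{n-1}$. Now $\im\,\del\psi\wedge\delbar\psi$ is a non-negative real $(1,1)$-form vanishing exactly when $\psi$ is constant, whereas, since the balanced condition makes $\del\omega_b$ a primitive $(2,1)$-form, Hodge--Riemann positivity gives that $-\im\,\del\omega_b\wedge\overline{\del\omega_b}\wedge\omega_b^{n-3}$ is a non-negative multiple of $\omega_b^{n}$, vanishing exactly when $\del\omega_b=0$. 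These two quantities appear with opposite imaginary signs in the integrated identity, while the coefficient $k(n-k-1)$ is strictly positive for $1\le k\le n-2$; consequently both integrals must vanish.

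Hence $\psi$, and so $f$, is constant, so that the Lee form $\vartheta$ vanishes and $\omega$ differs from the K\"ahler metric $\omega_b$ by a positive constant, in particular $\omega$ is \gcK\ (indeed K\"ahler up to scale). I expect the main technical obstacle to lie in carrying out the Leibniz expansion with the conformal factor $e^{f}$ cleanly enough that the cross terms really do drop out by virtue of the balanced identities for $\omega_b$, and in fixing the sign in the Hodge--Riemann positivity for primitive $(2,1)$-forms; once these are settled, the conclusion follows from the Stokes-plus-positivity argument sketched above.
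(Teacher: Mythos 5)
Your proposal is correct, and it reaches the same conclusion by the same overall strategy — first reduce to the globally conformally balanced case via Theorem \ref{thm:deldelbar-lcb--gcb} — but the second half is genuinely different in presentation from the paper's. The paper finishes by citing black boxes: the constant $\gamma_k(g)$ of Fu--Wang--Wu, the conformal invariance of its sign, the fact that $\gamma_k(g)=0$ for a \kG\ metric, and the Ivanov--Papadopoulos lemma that $\gamma_k(g)>0$ for a balanced non-K\"ahler metric when $k\in\{1,\dots,n-2\}$; the contradiction is then immediate. Your argument instead unpacks exactly what sits inside those citations: the identity $\del\delbar\omega_b^k\wedge\omega_b^{n-k-1}=-k(n-k-1)\,\del\omega_b\wedge\delbar\omega_b\wedge\omega_b^{n-3}$ for a balanced $\omega_b$, the vanishing of the cross terms via $\del\omega_b\wedge\omega_b^{n-2}=\delbar\omega_b\wedge\omega_b^{n-2}=0$, Stokes applied to $\del\delbar\psi\wedge\omega_b^{n-1}$, and the two positivity statements ($\im\,\del\psi\wedge\delbar\psi\wedge\omega_b^{n-1}\ge 0$, and $-\im\,\alpha\wedge\bar\alpha\wedge\omega_b^{n-3}\ge 0$ for the primitive $(2,1)$-form $\alpha=\del\omega_b$, which is the Hodge--Riemann relation with sign $\im^{p-q}(-1)^{j(j-1)/2}=-\im$ for $(p,q)=(2,1)$). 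I checked the coefficients: $k(-(n-2))+k(k-1)=-k(n-k-1)$, and the conformal prefactor $e^{(n-1)f}$ factors out cleanly, so your displayed identity and the sign bookkeeping are right; the conformal invariance of $\mathrm{sign}\,\gamma_k$ that the paper invokes is replaced in your version by the explicit handling of the $\psi$-terms, which contribute with the \emph{opposite} sign after integration and therefore force both sides to vanish. What your route buys is a self-contained proof that in fact yields slightly more than stated (namely $f$ constant and $\omega_b$ K\"ahler, so $\omega$ is K\"ahler up to scale); what the paper's route buys is brevity and a direct link to the $\gamma_k$ formalism. Both are valid.
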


\begin{proof}
 By \cite[Corollary 4]{fu-wang-wu}, for any $k\in\{1,\ldots,n-1\}$, to any Hermitian metric $g$ with associated $(1,1)$-form $\omega$, it is associated a unique constant $\gamma_k(g)\in\R$. The sign of $\gamma_k(g)$ is invariant in the conformal class of $g$ by \cite[Proposition 11]{fu-wang-wu}. Furthermore, on the one hand, for $k\in\{1,\ldots,n-1\}$, by \cite[Corollary 4]{fu-wang-wu}, if $\omega$ is a \kG\ structure, then $\gamma_k(g)=0$. On the other hand, for $k\in\{1, \ldots, n-2\}$, by \cite[Lemma 3.7]{ivanov-papadopulos}, if $g$ is balanced non-K\"ahler, then $\gamma_k(g)>0$. In particular, if $\omega$ is \gcb\ non-\gcK, then $\gamma_k(g)>0$.

 Suppose that $\omega$ is a \kG\ \lcb\ non-\gcK\ structure on $X$ with associated Hermitian metric $g$. By the hypothesis and by Theorem \ref{thm:deldelbar-lcb--gcb}, $\omega$ is \kG\ \gcb\ non-\gcK. Since $\omega$ is \kG, one has $\gamma_k(g)=0$. Since $\omega$ is \gcb\ non-\gcK, one has $\gamma_k(g)>0$. This is absurd.
\end{proof}

\subsection{Locally conformal balanced structures on solvmanifolds}

We consider the existence of locally conformally balanced structures on solvmanifolds, namely, compact quotients of connected simply-connected solvable Lie groups by co-compact discrete subgroups.

As in \cite[\S1]{sawai} for \lcK\ structures, we have that, on completely-solvable solvmanifolds endowed with left-invariant complex structures, it suffices to study the existence of left-invariant \lcb\ structures.

\begin{prop}
 Let $X = \left. \Gamma \middle\backslash G \right.$ be a completely-solvable solvmanifold endowed with a $G$-left-invariant complex structure. If there exists a \lcb\ structure, then there exists also a $G$-left-invariant \lcb\ structure.
\end{prop}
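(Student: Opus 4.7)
Following the strategy of Sawai \cite{sawai} for \lcK\ structures, I would combine Hattori's theorem with a Belgun-type symmetrization, adapted to the degree $2n-1$ equation via Michelsohn's correspondence between positive $(1,1)$-forms and their $(n-1)$-st powers.

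\textbf{Step 1 (making the Lee form invariant).} Let $\tilde\omega$ be an \lcb\ structure on $X$ with closed Lee form $\vartheta$, so $\de\tilde\omega^{n-1}=\vartheta\wedge\tilde\omega^{n-1}$. Since $G$ is completely solvable, Hattori's theorem identifies $H^1_{dR}(X;\R)$ with the Chevalley--Eilenberg cohomology $H^1(\mathfrak g^*,\de)$, so $\vartheta=\vartheta_0+\de f$ for some $G$-left-invariant closed $\vartheta_0\in\wedge^1\mathfrak g^*$ and some $f\in\mathcal C^\infty(X;\R)$. Setting $\omega:=\esp^{-f/(n-1)}\tilde\omega$, a direct computation gives
$$ \de\omega^{n-1}=\de(\esp^{-f}\tilde\omega^{n-1})=(\vartheta-\de f)\wedge\omega^{n-1}=\vartheta_0\wedge\omega^{n-1}, $$
so $\omega$ is an \lcb\ structure whose Lee form is already $G$-left-invariant.

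\textbf{Step 2 (symmetrizing $\omega^{n-1}$).} Since $\Gamma\backslash G$ is compact, $G$ is unimodular and a bi-invariant Haar measure descends to $X$. Belgun symmetrization $\mathrm{sym}\colon \wedge^\bullet X\to\wedge^\bullet\mathfrak g^*$ produces $G$-left-invariant forms, commutes with $\de$, and satisfies $\mathrm{sym}(\alpha\wedge\beta)=\mathrm{sym}(\alpha)\wedge\beta$ whenever $\beta$ is already $G$-left-invariant. Applied to the equation of Step 1 this yields
$$ \de\,\mathrm{sym}(\omega^{n-1})=\vartheta_0\wedge\mathrm{sym}(\omega^{n-1}). $$

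\textbf{Step 3 (inverting the $(n-1)$-st power).} Since $J$ is $G$-left-invariant, each $L_g^*(\omega^{n-1})=(L_g^*\omega)^{n-1}$ is a strongly positive real $(n-1,n-1)$-form; strong positivity is preserved under convex averaging, so $\mathrm{sym}(\omega^{n-1})$ is a $G$-left-invariant, strongly positive, real $(n-1,n-1)$-form. By Michelsohn's pointwise bijection \cite{michelsohn} between positive $(1,1)$-forms and strongly positive $(n-1,n-1)$-forms via $\omega\mapsto\omega^{n-1}$, there exists a unique positive $(1,1)$-form $\omega_0$ satisfying $\omega_0^{n-1}=\mathrm{sym}(\omega^{n-1})$; by uniqueness and the naturality of the inverse with respect to left translations, $\omega_0$ is $G$-left-invariant. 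The display in Step 2 then reads $\de\omega_0^{n-1}=\vartheta_0\wedge\omega_0^{n-1}$, exhibiting $\omega_0$ as the required $G$-left-invariant \lcb\ structure.

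The main obstacle is that the Lee equation for \lcb\ lives in degree $2n-1$ rather than $2$, so, in contrast with Sawai's \lcK\ argument \cite{sawai}, one cannot just symmetrize $\omega$ itself: the relation $\mathrm{sym}(\omega)^{n-1}\neq\mathrm{sym}(\omega^{n-1})$ in general prevents this. The key technical ingredient is therefore the Michelsohn inversion $\omega\mapsto\omega^{n-1}$, together with the verification that strong positivity is preserved both under the pullbacks $L_g^*$ and under their Haar average.
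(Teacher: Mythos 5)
Your proposal is correct and follows essentially the same route as the paper's proof: Hattori's theorem plus a conformal rescaling by $\esp^{-f/(n-1)}$ to make the Lee form invariant, Belgun symmetrization applied to $\omega^{n-1}$ rather than to $\omega$, and the Michelsohn correspondence to recover an invariant positive $(1,1)$-form from the symmetrized $(n-1,n-1)$-form. The obstacle you flag (that $\mathrm{sym}(\omega)^{n-1}\neq\mathrm{sym}(\omega^{n-1})$ in general) is exactly the point the paper's argument is built around.
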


\begin{proof}
 Denote the real dimension of $X$ by $2n$, and the complex structure on $X$ by $J$. Denote the Lie algebra associated to $G$ by $\mathfrak{g}$, and identify the $G$-left invariant forms on $X$ with the space $\wedge^{\bullet}\mathfrak{g}^*$. Let $\omega$ be a \lcb\ structure on $X$ with associated Hermitian metric $g:=\omega(\sspace,J\ssspace)$. By definition, there exists a $\de$-closed $1$-form $\vartheta$ such that $\de \omega^{n-1} = \vartheta \wedge \omega^{n-1}$.

 We can assume that $\vartheta$ is $G$-left-invariant. Indeed, by A. Hattori's theorem \cite[Corollary 4.2]{hattori}, $\vartheta$ is cohomologous to a $G$-left-invariant $1$-form: let $\vartheta_{\text{inv}}\in\wedge^1\mathfrak{g}^*$ and $f\in\mathcal{C}^\infty(X;\R)$ be such that $\vartheta = \vartheta_{\text{inv}} + \de f$. Consider the metric $\hat{g} := \exp\left(-\frac{f}{n-1}\right) g$. Then the associated $(1,1)$-form $\hat{\omega} = \exp\left(-\frac{f}{n-1}\right) \omega$ to $\hat{g}$ satisfies $\de\hat{\omega}^{n-1} = \vartheta_{\text{inv}} \wedge \hat{\omega}^{n-1}$ for the $G$-left-invariant $\de$-closed $1$-form $\vartheta_{\text{inv}}$.

 Hence, assume that the $\de$-closed $1$-form $\vartheta$ is $G$-left-invariant. Consider the F.~A. Belgun symmetrization map, \cite[Theorem 7]{belgun},
 $$ \mu\colon \wedge^\bullet X \to \wedge^\bullet\mathfrak{g}^* \;, \qquad \mu(\alpha)\;:=\;\int_X \alpha\lfloor_x \, \eta(x) \;, $$
 where $\eta$ is a $G$-bi-invariant volume form on $G$ such that $\int_X\eta=1$, \cite[Lemma 6.2]{milnor}. If $\eta \in \wedge^\bullet\mathfrak{g}^*$ and $\alpha \in \wedge^\bullet X$, then $\mu(\eta\wedge\alpha) = \eta \wedge \mu(\alpha)$, see, e.g., \cite[Lemma 2.5]{angella-kasuya-1}. Consider the $G$-left-invariant metric $g_{\text{inv}}$ whose associated $(1,1)$-form $\omega_{\text{inv}}\in\wedge^2\mathfrak{g}^* \cap \wedge^{1,1}X$ satisfies $\omega_{\text{inv}}^{n-1} := \mu(\omega^{n-1})$: it exists by the M.~L. Michelsohn trick, \cite[pages 279--280]{michelsohn}. It satisfies
 $$ \de\omega_{\text{inv}}^{n-1} \;=\; \de \left( \mu \left( \omega^{n-1} \right) \right) \;=\; \mu \left( \de\omega^{n-1} \right) \;=\; \mu\left( \vartheta \wedge \omega^{n-1} \right) \;=\; \vartheta \wedge \mu \left( \omega^{n-1} \right) \;=\; \vartheta \wedge \omega_{\text{inv}}^{n-1} \;, $$
 since $\de\circ\mu=\mu\circ\de$ by \cite[Theorem 7]{belgun}. Hence $\omega_{\text{inv}}$ is a $G$-left-invariant \lcb\ structure on $X$.
\end{proof}

\begin{ex}[A \uG\ \lcb\ non-balanced manifold with $\Delta^{2n-1}=0$]\label{ex:1g-lcb-nolck}
 Consider a $6$-dimensional nilmanifold $X$ with associated Lie algebra $\mathfrak{h}_8:=(0,0,0,0,0,12)$, in the notation of \cite{salamon}. Up to equivalence, it admits only one left-invariant complex structure, \cite[Corollary 15]{ugarte}. By \cite[Theorem 1.2, Theorem 3.2]{fino-parton-salamon}, any left-invariant Hermitian metric on $X$ is pluriclosed, and hence in particular \uG. By \cite[Proposition 1.1]{medori-tomassini-ugarte}, $X$ admits a left-invariant \lcb\ structure. On the other hand, by \cite[Theorem A]{benson-gordon}, or \cite[Theorem 1, Corollary]{hasegawa}, $X$ admits no K\"ahler structure.

Notice that $\Delta^5=0$ (see \cite[\S6]{latorre-ugarte-villacampa} or \cite[Table 2]{angella-franzini-rossi}), and that it does not admit any balanced metric.
(Here, $\Delta^k$ is the non-negative degree $\Delta^k:=\sum_{p+q=k}\dim_\C \left(H^{p,q}_{BC}(X)+\dim_\C H^{p,q}_{A}(X)\right)-b_k\in\N$, see \cite[Theorem A]{angella-tomassini-3}.) This shows in particular that, on compact complex manifolds of complex dimension $n$, the condition $\Delta^{2n-1}=0$ and the existence of \lcb\ structures do not imply the existence of balanced metrics.
\end{ex}

\section{Locally conformal (Hermitian) symplectic structures}\label{sec:lcht}

\noindent Let $X$ be a $2n$-dimensional manifold. We recall that a {\em locally conformal symplectic} (shortly, {\em \lcs}) structure on $X$ is given by a non-degenerate $2$-form $\Omega$ such that $\de\Omega=\vartheta\wedge\Omega$ for some $\vartheta\in\wedge^1X$ with $\de\vartheta=0$, \cite[\S1]{vaisman-ijmms}. If $\vartheta$ is $\de$-exact, then $\Omega$ is called {\em globally conformal symplectic} (shortly, {\em \gcs}): in fact, if $\vartheta=\de f$ for $f\in\mathcal{C}^\infty(X;\R)$, then the structure $\exp(-f)\Omega$ is actually symplectic.

\subsection{Locally conformal symplectic structures as mapping tori}
In this section, we review when \lcs\ manifolds can be seen as mapping tori over contact manifolds. First of all, we recall the definition of mapping torus.

\begin{defi}[{\cite[page 527]{li}}]
 Let $S$ be a compact manifold (possibly endowed with a further structure). Consider a diffeomorphism $\varphi$ on $S$ (preserving the possible further structure). The {\em mapping torus} $S_\varphi$ on $S$ is the manifold
 $$ S_\varphi \;:=\; \left. \left( S \times [0,1] \right) \middle\slash \left\{ (x,0) \sim (\varphi(x),1) \right\} \right. \;. $$
\end{defi}

We recall also that a {\em contact structure} on a $(2n-1)$-dimensional manifold $X$ is the datum of a $1$-form $\alpha\in\wedge^1X$ such that $\de\alpha^{n-1}\wedge\alpha \neq 0$ everywhere, see, e.g., \cite{blair}.

The following result proves that mapping tori over contact manifolds are endowed with a \lcs\ structure.

\begin{prop}[{\cite[Example 2]{banyaga}}]
 Let $S$ be a $(2n-1)$-dimensional compact manifold endowed with a contact structure $\alpha\in\wedge^1X$. Consider a diffeomorphism $\varphi$ of $S$ such that $\varphi^*\alpha=\alpha$.
 Then the mapping torus $S_\varphi$ on $S$ has a \lcs\ structure.
\end{prop}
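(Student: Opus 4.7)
The plan is to exhibit the \lcs\ structure on $S_\varphi$ by an explicit formula. First, I would realise $S_\varphi$ as the quotient of $S\times\R$ by the $\Z$-action generated by $(x,t)\mapsto(\varphi(x),t+1)$, so that globally defined forms on $S_\varphi$ correspond to $\Z$-invariant forms on $S\times\R$. Since $\varphi^{*}\alpha=\alpha$ by hypothesis, the pull-back of $\alpha$ is $\Z$-invariant and descends to a $1$-form on $S_\varphi$, still denoted $\alpha$. Similarly, $\de t$ is translation-invariant and descends to a nowhere-vanishing closed $1$-form on $S_\varphi$.

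The natural candidate forms are then
$$ \Omega \;:=\; \de\alpha + \de t\wedge\alpha \;\in\; \wedge^{2}S_\varphi, \qquad \vartheta \;:=\; -\de t \;\in\; \wedge^{1}S_\varphi\cap\ker\de, $$
and I would verify the two defining conditions of an \lcs\ structure in turn. For the differential equation, using $\de(\de t)=0$ and $\de t\wedge\de t=0$, one computes
$$ \de\Omega \;=\; -\de t\wedge \de\alpha \;=\; -\de t\wedge\bigl(\de\alpha+\de t\wedge\alpha\bigr) \;=\; \vartheta\wedge\Omega, $$
and $\de\vartheta=0$ is clear from $\de^{2}=0$.

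Non-degeneracy of $\Omega$ is the point where the contact hypothesis enters. Since $\alpha$ and $\de\alpha$ are pulled back from the $(2n-1)$-dimensional manifold $S$, the top power $(\de\alpha)^{n}$ vanishes for dimensional reasons. Using also $(\de t\wedge\alpha)^{2}=0$, only two terms survive in the binomial expansion:
$$ \Omega^{n} \;=\; (\de\alpha)^{n}+n\,(\de\alpha)^{n-1}\wedge \de t\wedge\alpha \;=\; \pm\,n\,\de t\wedge\alpha\wedge(\de\alpha)^{n-1}. $$
By the contact condition, $\alpha\wedge(\de\alpha)^{n-1}$ is a volume form on $S$, and since $\de t$ is everywhere transverse to the fibres of the natural fibration $S_\varphi\to S^{1}$, the wedge above is a volume form on $S_\varphi$. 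Hence $\Omega^{n}$ is nowhere vanishing and $\Omega$ is non-degenerate.

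I do not expect any serious obstacle: once the formula $\Omega=\de\alpha+\de t\wedge\alpha$ is written down, the verification is bookkeeping. The hypotheses are each used exactly once, namely the invariance $\varphi^{*}\alpha=\alpha$ to guarantee that $\Omega$ is well defined on the quotient, and the contact condition $\alpha\wedge(\de\alpha)^{n-1}\neq 0$ to ensure non-degeneracy; the closedness of the Lee form $\vartheta=-\de t$ comes for free from the product structure along $\R$.
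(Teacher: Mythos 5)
Your proof is correct and follows essentially the same route as the paper: the paper sets $\vartheta:=\pi^*(\de t)$ and $\Omega:=\de\beta-\vartheta\wedge\beta$ with $\beta$ the descended contact form, which is your construction up to the sign convention on the Lee form ($\vartheta=-\de t$ versus $+\de t$), and it verifies $\de\Omega=\vartheta\wedge\Omega$ and $\Omega^n=\pm n\,(\de\beta)^{n-1}\wedge\beta\wedge\vartheta\neq0$ by the same binomial-expansion argument. No gaps.
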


\begin{proof}
 For the sake of completeness, we recall here the construction.

 Consider the projection map $\pi_S \colon S \times [0,1] \to S$. Define the form $\beta:=\pi_S^*\alpha \in \wedge^1\left(S\times[0,1]\right)$. In fact, since $\varphi^*\alpha=\alpha$, we can consider $\beta\in\wedge^1S_\varphi$ such that $\de\beta^{n-1}\wedge\beta\neq 0$ everywhere.

 One has that $S_\varphi$ is the total space of a fibre bundle $S \hookrightarrow S_\varphi \stackrel{\pi}{\to} \Sf^1$. Consider a coordinate $t$ on $\Sf^1$, and define $\vartheta := \pi^*(\de t) \in \wedge^1S_\varphi$. Note that $\de\vartheta=0$.

 Define $\Omega := \de_\vartheta\beta = \de\beta - \vartheta\wedge\beta \in \wedge^2S_\varphi$. Note that
 $$ \de_\vartheta\Omega \;=\; 0 \;. $$
 Note also that, for $k\in\N$, it holds $\Omega^k = \phi^k + k\, \phi^{k-1} \wedge \beta \wedge \vartheta$, where $\phi:=\de\beta$. In particular,
 $$ \Omega^n \;=\; n\, \phi^{n-1} \wedge \beta \wedge \vartheta \neq 0 $$
 everywhere, since $\phi^{n-1} \wedge \beta$ and $\vartheta$ are independent.
 Hence $\Omega$ is a \lcs\ structure on $S_\varphi$, with associated Lee form $\vartheta$.
\end{proof}

Conversely, let us assume further hypotheses to guarantee that manifolds with \lcs\ structures are mapping tori over contact manifolds. The argument is inspired by \cite{li}, where H. Li proves that co-symplectic manifolds are symplectic mapping tori, \cite[Theorem 1]{li}, and that co-K\"ahler manifolds are K\"ahler mapping tori, \cite[Theorem 2]{li}.

\begin{prop}[{\cite[Theorem 2]{banyaga}}]\label{prop:lcs--mapping-torus}
 Consider a $2n$-dimensional compact manifold $X$ endowed with a \lcs\ structure $\Omega\in\wedge^2X$ with everywhere non-vanishing Lee form $\vartheta\in\wedge^1X$. Suppose that $\Omega$ is $\de_\vartheta$-exact, namely, there exists $\beta\in\wedge^1X$ such that $\Omega=\de\beta-\vartheta\wedge\beta$. Then $X$ has the structure of mapping torus over a $(2n-1)$-dimensional manifold $S$ endowed with a contact structure $\alpha\in\wedge^1X$.
\end{prop}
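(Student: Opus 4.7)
The strategy is to realize $X$ as a mapping torus over $\Sf^1$ via the closed nowhere-vanishing Lee form $\vartheta$, and to extract a contact form on the typical fiber from the primitive $\beta$, in the spirit of H.~Li's result \cite[Theorem 1]{li} for the co-symplectic case.

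First, since $\vartheta$ is a closed, nowhere-vanishing $1$-form on the compact $X$, Tischler's theorem applies: a $C^0$-small perturbation of $\vartheta$ in the space of closed $1$-forms has rational periods, and integration then yields a smooth submersion $\pi\colon X\to\Sf^1$ with compact, mutually diffeomorphic $(2n-1)$-dimensional fibers. Setting $S:=\pi^{-1}(0)$ and writing $\vartheta=\pi^*\de t$, with $t$ the coordinate on $\Sf^1=\R/\Z$, one obtains $X\cong S_\varphi$ as a mapping torus, where $\varphi$ is the monodromy of any vector field transverse to $\ker\vartheta$.

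Next, I would isolate the correct transverse vector field from the \lcs\ data: the non-degeneracy of $\Omega$ defines $R$ by $\iota_R\Omega=-\beta$, and antisymmetry immediately gives $\beta(R)=0$. A direct Cartan calculation using $\de\Omega=\vartheta\wedge\Omega$ yields
$$
\mathcal{L}_R\Omega\;=\;(\vartheta(R)-1)\,\Omega,\qquad \mathcal{L}_R\beta\;=\;(\vartheta(R)-1)\,\beta.
$$
The pointwise normalization $\vartheta(R)\equiv 1$ is then obtained by a gauge transform $\beta\mapsto\beta+\de_\vartheta g$, which leaves $\Omega$ unchanged since $\de\vartheta=0$ implies $\de_\vartheta^2=0$; concretely, setting $V:=\Omega^{-1}(\vartheta)$ (the vector field with $\iota_V\Omega=\vartheta$, which satisfies $\vartheta(V)=0$ and preserves both $\vartheta$ and $\Omega$), the required $g$ solves the linear first-order equation $V(g)=1-\vartheta(R)$ on $X$, whose zero-average compatibility condition $\int_X(1-\vartheta(R))\,\Omega^n=0$ follows at once from $\int_X\mathcal{L}_R\Omega^n=0$ by Stokes. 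This normalization is the main technical obstacle, as genuine global solvability hinges on the dynamics of the flow of $V$ together with the compactness of $X$.

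Once $\vartheta(R)=1$, the vector field $R$ preserves $\vartheta$, $\Omega$, and $\beta$, and projects to $\partial/\partial t$ under $\pi$, so $\beta$ descends to a $1$-form $\alpha$ on $S$ with $\varphi^*\alpha=\alpha$. To verify that $\alpha$ is a contact form, expand $\Omega^n$ using $(\vartheta\wedge\beta)\wedge(\vartheta\wedge\beta)=0$:
$$
\Omega^n\;=\;(\de\beta)^n-n\,\vartheta\wedge\beta\wedge(\de\beta)^{n-1}.
$$
From $\iota_R\beta=0$ and $\mathcal{L}_R\beta=0$ we get $\iota_R\de\beta=0$, hence $\iota_R(\de\beta)^n=n\,(\iota_R\de\beta)\wedge(\de\beta)^{n-1}=0$; since $R$ is nowhere zero, this forces the top-form $(\de\beta)^n$ to vanish. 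Thus $\Omega^n=-n\,\vartheta\wedge\beta\wedge(\de\beta)^{n-1}$, which in a local trivialization $X\supset U\cong S\times I$ reads $\Omega^n|_U=-n\,\de t\wedge\alpha\wedge(\de\alpha)^{n-1}$ (with $\alpha$ regarded as lifted from $S$); the non-vanishing of $\Omega^n$ then forces $\alpha\wedge(\de\alpha)^{n-1}$ to be a nowhere-zero top-form on $S$, so $(S,\alpha)$ is a contact manifold and $X\cong S_\varphi$ is the desired mapping torus.
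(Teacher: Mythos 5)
Your opening (Tischler's theorem producing the fibration $\pi\colon X\to\Sf^1$ with $\vartheta=\pi^*\de t$) and your Cartan computations for $R$ defined by $\iota_R\Omega=-\beta$ are correct, but the proof does not close at its central step: the normalization $\vartheta(R)\equiv1$, which you yourself flag as unresolved. Everything downstream depends on it --- the $R$-invariance of $\beta$, the claim that the time-one flow of $R$ is a contact-form-preserving monodromy, and the vanishing of $(\de\beta)^n$. The equation $V(g)=1-\vartheta(R)$ for a fixed nowhere-vanishing vector field $V$ on a compact manifold is a cohomological equation over the flow of $V$, and the vanishing of $\int_X(1-\vartheta(R))\,\Omega^n$ is necessary but nowhere near sufficient: if $V$ has a closed orbit $\gamma$ along which $\oint_\gamma(1-\vartheta(R))\,\de\tau\neq0$, no continuous $g$ exists, and nothing in the hypotheses excludes this. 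Note moreover that $\vartheta(R)=\Omega(V,R)=\beta(V)$, so the condition you are trying to arrange (equivalently, $(\de\beta)^n=0$, since $(\de\beta)^n=(1-\beta(V))\,\Omega^n$) is a pointwise constraint on the \emph{chosen} primitive $\beta$, not a property of $\Omega$ alone; a gauge change $\beta\mapsto\beta+\de_\vartheta g$ shifts $\beta(V)$ by $V(g)$, which cannot be prescribed globally. So the proof is incomplete.

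The paper avoids constructing an invariant flow altogether. It restricts $\beta$ to the fibres $S_\tau=\pi^{-1}(\tau)$ to obtain $1$-forms $\tilde\alpha_\tau$, reads the contact condition off the non-vanishing of $\Omega^n$ in a local trivialization, and then handles the monodromy softly: since $\hat\varphi^*\tilde\alpha_1=\tilde\alpha_0$ and $\{\tilde\alpha_\tau\}_{\tau\in[0,1]}$ is a path of contact forms on $S$, Gray's stability theorem yields a diffeomorphism $F$ isotopic to the identity with $\tilde\alpha_1=F^*\tilde\alpha_0$, and the corrected monodromy $\varphi=F\circ\hat\varphi$ preserves $\tilde\alpha_0$. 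This is the genuinely different ingredient: it replaces your global normalization of $R$ (a hard dynamical problem) by a purely topological stability statement. If you want to keep your dynamical framework, you should substitute the Gray-stability step for the normalization of $R$; the flow of $R$ is simply not needed once the monodromy is adjusted this way.
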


\begin{proof}
 For the sake of completeness, we show here the construction.

 Since $\vartheta$ is a $\de$-closed $1$-form on $X$, by D. Tischler's theorem, \cite[Theorem 1]{tischler}, there exist a $(2n-1)$-dimensional manifold $S$ and a diffeomorphism $\hat\varphi$ on $S$ such that $X$ is the mapping torus $S_{\hat\varphi}$ on $S$, where $S_{\hat\varphi}$ is the total space of a fibre bundle $S \hookrightarrow S_{\hat\varphi} \stackrel{\pi}{\to} \Sf^1$ with $\vartheta=\pi^*(\de t)$, where $t$ is a coordinate on $\Sf^1$.

 For any $\tau\in[0,1]$, consider the map $\iota_\tau \colon S \ni x \mapsto (x,\tau) \in S \times [0,1]$. Consider also the quotient projection map $\rho\colon S \times [0,1] \to S_{\hat\varphi}$. Define
 $$ \tilde\alpha_\tau \;:=\; (\rho\circ\iota_\tau)^*\beta \in \wedge^1 S \;. $$

 Note that
 $$ \de\tilde\alpha_\tau^{n-1} \wedge \tilde\alpha_\tau \;\neq\; 0 $$
 everywhere.
 In particular, $\tilde\alpha_\tau$, for any $\tau\in[0,1]$, is a contact structure on $S$.
 Indeed, take a local trivialization chart $U\times K$ on $S_{\hat\varphi}$ such that $\pi\lfloor_{U\times K}$ is the projection on the $K$ factor. Denote the coordinate on $K$ by $t$. Note that $0 \neq \Omega^n\lfloor_{U \times K} = \de\beta^{n-1}\wedge\beta\wedge\de t \lfloor_{U\times K} = (\rho\circ\iota_\tau)^*\left(\de\beta^{n-1}\wedge\beta \lfloor_{U} \right) \wedge \de t \lfloor_{K}$ everywhere. Hence $\de\tilde\alpha_\tau^{n-1} \wedge \tilde\alpha_\tau = (\rho\circ\iota_\tau)^*\left(\de\beta^{n-1}\wedge\beta\right) \neq 0$ everywhere.

 We claim now that there is a diffeomorphism $\varphi$ of $S$ isotopic to $\hat\varphi$ such that $\varphi^*\tilde\alpha_0=\tilde\alpha_0$.
 Firstly, note that $\iota_1 \circ {\hat\varphi} = \iota_0$. It follows that ${\hat\varphi}^*\tilde\alpha_1=\tilde\alpha_0$. Hence ${\hat\varphi}^* \left( \de\tilde\alpha_1^{n-1} \wedge \tilde\alpha_1 \right) =  \de\tilde\alpha_0^{n-1} \wedge \tilde\alpha_0$. Since $\iota_0$ and $\iota_1$ are homotopic, then $\left[\de\tilde\alpha_0^{n-1}\wedge\tilde\alpha_0\right] = \left[\de\tilde\alpha_1^{n-1}\wedge\tilde\alpha_1\right]$. By the Gray stability theorem, see, e.g., \cite[pages 60-61]{geiges}, there exists a diffeomorphism of $S$ isotopic to the identity such that $\tilde\alpha_1 = F^*\left(\tilde\alpha_0\right)$. By taking $\varphi:=F \circ \hat\varphi$, one gets that $\varphi^* ( \tilde\alpha_0 ) = \tilde\alpha_0$, proving the claim.

 In particular, $\alpha:=\tilde\alpha_0$ is a contact structure on $S_{\hat\varphi}$. Finally, note that, since $\hat\varphi$ and $\varphi$ are isotopic, then $S_\varphi$ and $S_{\hat\varphi}$ are diffeomorphic as fibre bundles.
\end{proof}

In the next section, we will apply the previous results to the case of nilmanifolds, namely, compact quotients of connected simply-connected nilpotent Lie groups by co-compact discrete subgroups, see \ref{thm:nilmanifold-lcht--mapping-torus-contact}.

\subsection{Locally conformal holomorphic-tamed structures on nilmanifolds}

We recall that a {\em holomorphic-tamed} (also called {\em Hermitian symplectic}) structure on a manifold $X$ endowed with a complex structure $J$ is the datum of a symplectic structure $\Omega$ taming $J$, that is, such that $g_J:=\frac{1}{2}\left(\Omega(\sspace, J\ssspace)-\Omega(J\sspace,\ssspace)\right)$ is a $J$-Hermitian metric on $X$. Up to now, no example of complex holomorphic-tamed non-K\"ahler manifold is known, see \cite[page 678]{li-zhang}, \cite[Question 1.7]{streets-tian}: in a sense, this is the integrable higher-dimensional analogue of the Donaldson ``tamed to compatible question'' for compact almost-complex $4$-manifolds, \cite[Question 2]{donaldson}; (compare also \cite{migliorini-tomassini, tomassini}).

In particular, recall that non-tori nilmanifolds never admit K\"ahler structures, \cite[Theorem A]{benson-gordon}, see also \cite[Theorem 1, Corollary]{hasegawa}. In \cite[Theorem 3.3]{angella-tomassini-1} for dimension $6$, and in \cite[Theorem 1.3]{enrietti-fino-vezzoni} in full generality, it is proven that non-tori nilmanifolds endowed with left-invariant complex structures admit no holomorphic-tamed structures, too. A further generalization for completely-solvable solvmanifolds has been recently obtained by A. Fino and H. Kasuya, \cite[Theorem 3.1]{fino-kasuya}.

Therefore, we are interested in studying the locally conformal analogue of holomorphic-tamed structures on nilmanifolds.

\begin{defi}
Let $X$ be a manifold endowed with a complex structure $J$. A non-degenerate $2$-form $\Omega\in\wedge^2X$ on $X$ is called {\em locally conformal holomorphic-tamed} (shortly, {\em \lcht}; also called {\em locally conformal Hermitian symplectic}) if:
\begin{inparaenum}[\itshape (i)]
 \item $\Omega$ tames the complex structure $J$, namely, for any $x\in X$, for any $v_x\in T_xX\setminus\{0\}$, it holds $\Omega_x(v_x,Jv_x)>0$; in other words, $g_J := \frac{1}{2} \left(\Omega(\sspace,J\ssspace)-\Omega(J\sspace,\ssspace)\right)$ is a $J$-Hermitian metric on $X$;
 \item there exists $\vartheta\in\wedge^1X$ such that $\de\Omega=\vartheta\wedge\Omega$ and $\de\vartheta=0$.
\end{inparaenum}
The $1$-form $\vartheta$ is called the {\em Lee form} associated to $\Omega$. If such a $\vartheta$ is $\de$-exact, then $\Omega$ is called {\em globally conformal holomorphic-tamed} (shortly, {\em \gcht}).
\end{defi}

\begin{rem}
Note that, if $\Omega$ is $J$-invariant, that is, $\Omega\in\wedge^2X\cap\wedge^{1,1}X$, then $\Omega$ is in fact \lcK. Note also that, if $\Omega$ is \gcht, i.e., there exists $f\in\mathcal{C}^\infty(X;\R)$ such that $\vartheta=\de f$, then the symplectic structure $\tilde\Omega:=\exp(-f)\Omega$ is in fact a holomorphic-tamed structure in the same conformal class of $\Omega$.
\end{rem}

\medskip

The following lemma allows to reduce, on completely-solvable solvmanifolds with left-invariant complex structures, to considering just left-invariant \lcht\ structures.

\begin{lemma}\label{lemma:nilmfd-lcht--inv-lcht}
 Let $X = \left. \Gamma \middle\backslash G \right.$ be a solvmanifold endowed with a $G$-left-invariant complex structure. Denote by $\mathfrak{g}$ the Lie algebra associated to $G$. Assume that the inclusion $\wedge^\bullet\mathfrak{g}^* \hookrightarrow \wedge^\bullet X$ induces the isomorphism $H^1\left(\mathfrak{g}\right) \stackrel{\simeq}{\to} H^1_{dR}(X;\R)$. If $X$ admits a \lcht\ structure $\Omega$ such that $\de\Omega=\vartheta\wedge\Omega$, then it admits also a $G$-left-invariant \lcht\ structure $\hat\Omega$ such that $\de\hat\Omega=\hat\vartheta\wedge\hat\Omega$, with $[\vartheta]=[\hat\vartheta]$.
\end{lemma}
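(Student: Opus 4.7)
The plan is to mimic the two-step strategy used for the analogous locally conformal balanced statement earlier in the paper: first use the hypothesis on $H^1$ to replace the Lee form by a cohomologous $G$-left-invariant one through a conformal gauge transformation, and then apply the F.~A. Belgun symmetrization trick to make the whole structure $G$-left-invariant without changing the (already invariant) Lee form.

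For the first step, the hypothesis that $H^1(\mathfrak{g})\to H^1_{dR}(X;\R)$ is an isomorphism provides $\hat\vartheta\in\wedge^1\mathfrak{g}^*\cap\ker\de$ and $f\in\mathcal{C}^\infty(X;\R)$ with $\vartheta=\hat\vartheta+\de f$. Set $\tilde\Omega:=\exp(-f)\,\Omega$. A direct computation yields
\[
 \de\tilde\Omega \;=\; \exp(-f)\left(-\de f+\vartheta\right)\wedge\Omega \;=\; \hat\vartheta\wedge\tilde\Omega,
\]
and since $\exp(-f)>0$, the form $\tilde\Omega$ still tames $J$. Thus $\tilde\Omega$ is a \lcht\ structure whose Lee form $\hat\vartheta$ is $G$-left-invariant and cohomologous to $\vartheta$.

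For the second step, let $\mu\colon\wedge^\bullet X\to\wedge^\bullet\mathfrak{g}^*$ be the Belgun symmetrization map from \cite[Theorem 7]{belgun}, and set $\hat\Omega:=\mu(\tilde\Omega)$. Using $\de\circ\mu=\mu\circ\de$ and the fact that $\mu(\eta\wedge\alpha)=\eta\wedge\mu(\alpha)$ whenever $\eta\in\wedge^\bullet\mathfrak{g}^*$, one gets
\[
 \de\hat\Omega \;=\; \mu(\de\tilde\Omega) \;=\; \mu(\hat\vartheta\wedge\tilde\Omega) \;=\; \hat\vartheta\wedge\mu(\tilde\Omega) \;=\; \hat\vartheta\wedge\hat\Omega,
\]
so $\hat\Omega$ is a $G$-left-invariant form satisfying the \lcs-type condition with the desired Lee form $\hat\vartheta$, and $[\hat\vartheta]=[\vartheta]$ by construction.

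The step I expect to require the most care is verifying that $\hat\Omega$ still tames $J$ (which in particular forces non-degeneracy, since $\hat\Omega(v,Jv)>0$ for all $v\neq 0$ clearly prevents a nonzero kernel). This however is essentially automatic from the averaging definition of $\mu$: since $J$ is $G$-left-invariant, each translate $L_g^\ast\tilde\Omega$ tames $J$ (taming is preserved by pullback along $J$-holomorphic diffeomorphisms), and $\hat\Omega$ at any point is an integral against the bi-invariant probability form $\eta$ of such translates, so the open convex taming condition $\hat\Omega_x(v_x,Jv_x)>0$ passes to the average. Putting the three steps together produces the desired $G$-left-invariant \lcht\ structure $\hat\Omega$ with Lee form $\hat\vartheta$ in the prescribed de Rham class.
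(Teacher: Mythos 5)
Your proposal is correct and follows essentially the same two-step route as the paper's proof: a conformal rescaling by $\exp(-f)$ to make the Lee form $G$-left-invariant, followed by the Belgun symmetrization $\mu$ using $\de\circ\mu=\mu\circ\de$ and $\mu(\hat\vartheta\wedge\alpha)=\hat\vartheta\wedge\mu(\alpha)$. Your explicit check that the taming condition survives the averaging is a detail the paper leaves implicit, but it is exactly the standard point of the Belgun trick and is argued correctly.
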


Note that the condition $H^1\left(\mathfrak{g}\right) \stackrel{\simeq}{\to} H^1_{dR}(X;\R)$ holds, for example, when $X$ is a completely-solvable solvmanifold, by A. Hattori's theorem, \cite[Corollary 4.2]{hattori}.

\begin{proof}
 By the hypothesis $H^1\left(\mathfrak{g}\right) \stackrel{\simeq}{\to} H^1_{dR}(X;\R)$ there is a $G$-left-invariant form $\hat\vartheta$ being cohomologous to $\vartheta$: let $f\in\mathcal{C}^\infty(X;\R)$ be such that $\vartheta=\hat\vartheta+\de f$. Then $\tilde\Omega:=\exp(-f)\Omega$ is a \lcht\ structure, in the conformal class of $\Omega$, satisfying $\de\tilde\Omega=\hat\vartheta\wedge\tilde\Omega$. Consider the F.~A. Belgun symmetrization map, \cite[Theorem 7]{belgun},
 $$ \mu\colon \wedge^\bullet X \to \wedge^\bullet\mathfrak{g}^* \;, \qquad \mu(\alpha)\;:=\;\int_X \alpha\lfloor_x \, \eta(x) \;, $$
 where $\eta$ is a $G$-bi-invariant volume form on $G$ such that $\int_X\eta=1$, \cite[Lemma 6.2]{milnor}. By \cite[Lemma 2.5]{angella-kasuya-1}, if $\eta \in \wedge^\bullet\mathfrak{g}^*$ and $\alpha \in \wedge^\bullet X$, then $\mu(\eta\wedge\alpha) = \eta \wedge \mu(\alpha)$. In particular, $\hat\Omega:=\mu(\tilde\Omega)$ is a $G$-left-invariant \lcht\ structure satisfying $\de\hat\Omega=\hat\vartheta\wedge\hat\Omega$.
\end{proof}

We use now the results in the previous section to get a description of nilmanifolds endowed with left-invariant complex structures and with \lcht\ structures as mapping tori over contact nilmanifolds.

\begin{thm}\label{thm:nilmanifold-lcht--mapping-torus-contact}
 Let $X = \left. \Gamma \middle\backslash G \right.$ be a nilmanifold endowed with a $G$-left-invariant complex structure and a \lcht\ structure $\Omega$. Then either $\Omega$ is a \gcht\ structure on $X$, or it induces on $X$ a structure of mapping torus over a $(2n-1)$-dimensional contact nilmanifold.
\end{thm}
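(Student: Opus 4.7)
The strategy is to reduce to the $G$-left-invariant setting, apply Dixmier's vanishing theorem for twisted cohomology of nilpotent Lie algebras, and then invoke Proposition \ref{prop:lcs--mapping-torus}. Set $\mathfrak{g}:=\operatorname{Lie}(G)$. By Nomizu's theorem, the inclusion $\wedge^\bullet\mathfrak{g}^*\hookrightarrow \wedge^\bullet X$ induces an isomorphism $H^\bullet(\mathfrak{g})\stackrel{\simeq}{\to} H^\bullet_{dR}(X;\R)$; in particular, the hypothesis of Lemma \ref{lemma:nilmfd-lcht--inv-lcht} is satisfied, and one obtains a $G$-left-invariant \lcht\ structure $\hat\Omega\in\wedge^2\mathfrak{g}^*$ with closed Lee form $\hat\vartheta\in\wedge^1\mathfrak{g}^*$ cohomologous to $\vartheta$. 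If $[\vartheta]=[\hat\vartheta]=0$ in $H^1_{dR}(X;\R)$, then $\Omega$ is \gcht\ and there is nothing more to prove. From now on assume $[\hat\vartheta]\neq 0$; being a non-zero left-invariant $1$-form on $X$, $\hat\vartheta$ is then nowhere vanishing.

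Consider the twisted Chevalley--Eilenberg complex $(\wedge^\bullet\mathfrak{g}^*,\de_{\hat\vartheta})$. Since $\hat\vartheta$ defines a non-trivial character of the nilpotent Lie algebra $\mathfrak{g}$, a classical theorem of Dixmier gives $H^\bullet_{\hat\vartheta}(\mathfrak{g})=0$ in every degree. The relation $\de\hat\Omega=\hat\vartheta\wedge\hat\Omega$ reads $\de_{\hat\vartheta}\hat\Omega=0$; vanishing of $H^2_{\hat\vartheta}(\mathfrak{g})$ therefore produces $\beta\in\wedge^1\mathfrak{g}^*$ satisfying $\hat\Omega=\de\beta-\hat\vartheta\wedge\beta$. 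With $\hat\vartheta$ nowhere vanishing and $\hat\Omega$ now $\de_{\hat\vartheta}$-exact, Proposition \ref{prop:lcs--mapping-torus} realizes $X$ as the mapping torus $S_\varphi$ of a diffeomorphism $\varphi$ of a compact $(2n-1)$-dimensional manifold $S$ equipped with a contact form pulled back from $\beta$.

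It remains to verify that $S$ is a nilmanifold. The character $\hat\vartheta\colon\mathfrak{g}\to\R$ vanishes on $[\mathfrak{g},\mathfrak{g}]$, so $\mathfrak{h}:=\ker\hat\vartheta\subset\mathfrak{g}$ is a codimension-one nilpotent ideal. After a small perturbation of $\hat\vartheta$ within its cohomology class (Tischler), $\mathfrak{h}$ may be taken to be rational with respect to the Mal'cev rational structure determined by $\Gamma$; then the corresponding connected, closed, normal subgroup $H\subset G$ meets $\Gamma$ in a co-compact lattice, and the fibre of the Tischler fibration $X\to\Sf^1$ is precisely the $(2n-1)$-dimensional nilmanifold $S=\left.(\Gamma\cap H)\middle\backslash H\right.$, on which $\beta$ restricts to a left-invariant contact form. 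The main subtlety of the proof is this last rationality step: the other ingredients reduce entirely to the Lie algebra level, whereas the identification of $S$ as a genuine nilmanifold (rather than merely a compact quotient of a solvmanifold fibre) depends on Mal'cev's theory combined with the Tischler perturbation.
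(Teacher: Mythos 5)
Your proof follows the paper's argument exactly: reduce to $G$-left-invariant data via Nomizu's theorem and Lemma \ref{lemma:nilmfd-lcht--inv-lcht}, observe the dichotomy between $\hat\vartheta=0$ (the \gcht\ case) and $\hat\vartheta$ nowhere vanishing, apply Dixmier's vanishing theorem for the twisted Chevalley--Eilenberg complex to obtain $\de_{\hat\vartheta}$-exactness of $\hat\Omega$, and conclude with Proposition \ref{prop:lcs--mapping-torus}; the paper simply asserts that ``$S$ is in fact a nilmanifold'' where you supply the Mal'cev/rationality details. One small caveat in that last step: the Tischler perturbation moves $\hat\vartheta$ to a nearby \emph{rational} cohomology class rather than staying within its own class (a non-zero closed left-invariant $1$-form is the unique invariant representative of its class, so a perturbation within the class cannot change $\ker\hat\vartheta$), but this does not affect the validity of the argument.
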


\begin{proof}
 Consider the Lee form $\vartheta\in\wedge^1X$ of $\Omega$, namely, $\de\Omega=\vartheta\wedge\Omega$ with $\de\vartheta=0$. By Nomizu's theorem, \cite[Theorem 1]{nomizu}, we can apply Lemma \ref{lemma:nilmfd-lcht--inv-lcht} and hence we may assume that $\vartheta$ is $G$-left-invariant.
 Note that either $\vartheta=0$, and hence $\Omega$ is a symplectic structure on $X$, i.e., a \gcht\ structure, or $\vartheta$ is everywhere non-vanishing. Hence assume now that $\vartheta$ is everywhere non-vanishing.

 Again by Lemma \ref{lemma:nilmfd-lcht--inv-lcht}, we may suppose that also $\Omega$ is $G$-left-invariant.
 Since $\Omega\in\wedge^2\mathfrak{g}^*$ is $\de_\vartheta$-closed, by \cite[Th\'{e}or\`{e}me 1]{dixmier}, see also \cite[Corollary 2.5]{millionshchikov}, one has that $\Omega$ is $\de_\vartheta$-exact.
 By Proposition \ref{prop:lcs--mapping-torus}, $X$ has a structure of mapping torus $S_\varphi$ on a $(2n-1)$-dimensional manifold $S$ endowed with a contact structure. Note that $S$ is in fact a nilmanifold.
\end{proof}

\medskip

We recall that, by \cite[Main Theorem]{sawai}, a non-torus nilmanifold endowed with a left-invariant complex structure admits a \lcK\ structure if and only if it is biholomorphic to a quotient of $\left( H(2n-1) \times \R,\, J_+ \right)$. Here $H(2n-1)$ is the $(2n-1)$-dimensional Heisenberg group, with Lie algebra $\heis_{2n-1}=\left\langle e_1,\ldots, e_{2n-2}, e_{2n} \right\rangle$ with $\left[e_{2j-1},e_{2j}\right]=-e_{2n}$ for any $j\in\{1,\ldots,n-1\}$, and $\left[e_h,e_k\right]=0$ otherwise. Consider $\heis_{2n-1}\times\R$ where $\R=\left\langle e_{2n-1} \right\rangle$: it is endowed with a linear (integrable) complex structure $J_+$ defined as $J_+e_{2j-1}=e_{2j}$ for $j\in\{1,\ldots,n\}$ (see also \cite[Table 1]{ceballos-otal-ugarte-villacampa}, where $\heis_{2n-1}\times\R$ for $n=3$ is $\mathfrak{h}_3$).

Analogously, we prove that $2$-step nilmanifolds endowed with left-invariant complex structures admit a \lcht\ structure if and only if they are diffeomorphic to a quotient of $H(2n-1)\times\R$.

\begin{thm}\label{thm:nilmfd-2step-lcht--heisenberg}
 Let $X = \left. \Gamma \middle\backslash G \right.$ be a $2n$-dimensional nilmanifold. Suppose that the Lie algebra $\g$ associated to $X$ is $2$-step. If $X$ admits a $G$-left-invariant complex structure $J$ and a \lcht\ structure, then $\g\simeq\heis_{2n-1}$.
\end{thm}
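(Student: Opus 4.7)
The plan is to apply Theorem~\ref{thm:nilmanifold-lcht--mapping-torus-contact} to reduce the statement to the mapping-torus case, classify the resulting $(2n-1)$-dimensional contact $2$-step nilpotent Lie algebra, and then trivialize the remaining central extension.

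First I would rule out the \gcht\ possibility. Since $\g$ is $2$-step, it is non-abelian, hence $X$ is not a torus; but a \gcht\ structure would, after a conformal rescaling by $\exp(-f)$ with $\vartheta = \de f$, produce a holomorphic-tamed structure on $(X, J)$, forcing $X$ to be a torus by \cite[Theorem 1.3]{enrietti-fino-vezzoni} (or \cite[Theorem 3.3]{angella-tomassini-1} in dimension six). Theorem~\ref{thm:nilmanifold-lcht--mapping-torus-contact} then exhibits $X$ as a mapping torus over a $(2n-1)$-dimensional contact nilmanifold $S$. Using Lemma~\ref{lemma:nilmfd-lcht--inv-lcht} together with Nomizu's isomorphism $H^1(\g) \simeq H^1_{dR}(X; \R)$, I would assume the Lee form $\vartheta$ to be left-invariant (hence everywhere non-vanishing), so that $\h := \ker \vartheta \subset \g$ is a codimension-one ideal and Dixmier's theorem furnishes a left-invariant $\beta \in \g^*$ with $\Omega = \de\beta - \vartheta \wedge \beta$. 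The construction in Proposition~\ref{prop:lcs--mapping-torus} then promotes $\alpha := \beta|_\h$ to a left-invariant contact form on $\h$.

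Next I would show $\h \cong \heis_{2n-1}$. The algebra $\h$ is $(2n-1)$-dimensional and $2$-step nilpotent (inheriting both properties from $\g$), and it cannot be abelian since $\de\alpha \neq 0$. The identity $\de\alpha(X,Y) = -\alpha([X,Y])$ combined with $[\h, \h] \subseteq Z(\h)$ shows that $\de\alpha$ factors through $(\h/Z(\h)) \otimes (\h/Z(\h))$, so its rank is bounded above by $\dim(\h/Z(\h))$. The contact condition forces this rank to be at least $2n-2$, which gives $\dim Z(\h) \leq 1$; together with $0 \neq [\h, \h] \subseteq Z(\h)$, this pins down $\dim Z(\h) = 1$ and $[\h, \h] = Z(\h) = \R z$. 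The induced bracket $\h/Z(\h) \times \h/Z(\h) \to \R z$ is then non-degenerate, since $X \in Z(\h)$ iff $[X, \h] = 0$, which identifies $\h$ with the Heisenberg algebra $\heis_{2n-1}$.

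Finally, I would trivialize the extension. Pick $e_0 \in \g$ with $\vartheta(e_0) = 1$, so that $\g = \h \oplus \R e_0$ as a vector space and $D := \mathrm{ad}_{e_0}\colon \h \to \h$ is a derivation. The condition $[\g, \g] \subseteq Z(\g)$ extracts both $D(\h) \subseteq Z(\h) = \R z$ and $D(z) = 0$, so $D$ descends to a linear map $\h/Z(\h) \to \R z$. The non-degeneracy of the bracket on $\h/Z(\h)$ makes $X \mapsto \mathrm{ad}_X|_\h$ a linear isomorphism $\h/Z(\h) \to \Hom(\h/Z(\h), \R z)$ between $(2n-2)$-dimensional spaces, so there exists $X \in \h$ with $\mathrm{ad}_X|_\h = -D$. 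Setting $e_0' := e_0 + X$ places $e_0'$ in $Z(\g)$ while preserving $\vartheta(e_0') = 1$; this yields a Lie-algebra direct sum decomposition $\g \cong \heis_{2n-1} \oplus \R$, i.e., the Lie algebra of $H(2n-1) \times \R$, in agreement with Sawai's classification for \lcK\ structures. The main obstacle is the classification of $\h$ in the previous paragraph: one has to extract from the interplay of the contact and $2$-step conditions both $\dim Z(\h) = 1$ and $[\h, \h] = Z(\h)$; the trivialization of the extension is then a routine cohomological computation, since every admissible derivation turns out to be inner.
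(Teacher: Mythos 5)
Your proof is correct, but it reaches the conclusion by a genuinely different route from the paper's. The paper keeps the complex structure in play throughout: assuming two linearly independent commutators $A,C\in[\g,\g]$, it observes that $\iota_A\Omega=\beta(A)\,\vartheta$ and $\iota_C\Omega=\beta(C)\,\vartheta$ (with $\beta$ the Dixmier primitive, using that $[\g,\g]$ is central and $\vartheta$ annihilates it), so the Gram-type matrix of $\Omega(\sspace,J\ssspace)$ on the span of $A$ and $C$ is degenerate, contradicting the taming condition; this yields $\dim[\g,\g]=1$ directly, and the non-degeneracy of $\de\beta$ on a codimension-two subspace (read off from $\Omega^n=-n\,\vartheta\wedge\beta\wedge(\de\beta)^{n-1}\neq0$) then identifies $\g$. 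You instead use $J$ exactly once --- to exclude an exact Lee form via \cite[Theorem 1.3]{enrietti-fino-vezzoni} --- and afterwards argue purely with the \lcs\ data: the contact hyperplane ideal $\h=\ker\vartheta$, the classification of $(2n-1)$-dimensional $2$-step contact nilpotent algebras as $\heis_{2n-1}$ via the rank bound $\mathrm{rk}\,\de\alpha\leq\dim(\h/Z(\h))$, and the splitting of the one-dimensional extension by an inner derivation. Your route therefore proves slightly more (any left-invariant \lcs\ structure with non-exact Lee form on a $2n$-dimensional $2$-step nilpotent algebra already forces $\heis_{2n-1}\oplus\R$; the taming is needed only to rule out the genuinely symplectic case), and it makes explicit two points the paper glosses over: the case $\vartheta=0$, where Dixmier's theorem does not apply and one must invoke the holomorphic-tamed rigidity of nilmanifolds, and the final splitting $\g\cong\heis_{2n-1}\oplus\R$ --- note that the paper's stated conclusion ``$\g\simeq\heis_{2n-1}$'' must be read as $\heis_{2n-1}\times\R$ for dimensional reasons, and your last step is precisely the argument that supplies the missing central $\R$-factor. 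The paper's argument is shorter; yours isolates the respective roles of the complex and conformally symplectic structures more cleanly.
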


\begin{proof}
 Suppose that the Lie algebra associated to $X$ is isomorphic to $\heis_{2n-1}\times\R$. By \cite[Main Theorem]{sawai}, the $G$-left-invariant complex structure $J_+$ on $X$ admits a \lcK\ structure, namely, $\Omega := \de e^{2n} - (-e^{2n-1}) \wedge e^{2n} = \sum_{j=1}^{n} e^{2j-1} \wedge e^{2j}$, where $\left\{e^j\right\}_j$ denotes the dual basis to $\left\{e_j\right\}_j$.

 Suppose now that $X$ is a $2$-step nilmanifold endowed with a $G$-left-invariant complex structure and with a \lcht\ structure. By Lemma \ref{lemma:nilmfd-lcht--inv-lcht}, $X$ admits a $G$-left-invariant \lcht\ structure $\Omega$ such that $\de\Omega=\vartheta\wedge\Omega$ with $\vartheta\in\wedge^1\mathfrak{g}^*$.
 Consider the $J$-Hermitian metric $g:=\frac{1}{2}\left(\Omega(\sspace,J\ssspace)-\Omega(J\sspace,\ssspace)\right)$.

 If $b_1\leq 2n-2$, and by using Nomizu's theorem \cite[Theorem 1]{nomizu}, then there exist $A,C \in [\g,\g]$ with $\R A \neq \R C$.
 Since $\g$ is $2$-step nilpotent and $\vartheta$ is $\de$-closed, it holds that
 $$ \Omega(A,\sspace) \;=\; \beta(A)\cdot\vartheta \qquad \text{ and } \qquad \Omega(C,\sspace) \;=\; \beta(C)\cdot\vartheta \;. $$
 In particular, $\beta(A)\neq0$ and $\beta(C)\neq0$. Furthermore, $B:=JA\not\in[\g,\g]$ and $D:=JC\not\in[\g,\g]$.

 Since $\R A \neq \R C$, we have that, for any $t\in\R$,
 $$ 0 \;<\; \Omega \left( t\, A + C, J(t\, A + C) \right) \;=\;
    \left( \begin{array}{cc} t & 1 \end{array} \right)
    \cdot
    \left( \begin{array}{cc} \Omega(A,JA) & \Omega(A,JC) \\ \Omega(C,JA) & \Omega(C,JC) \end{array} \right)
    \cdot
    \left( \begin{array}{c} t \\ 1 \end{array} \right) \;. $$
 On the other side, by the above formulas, one has
 \begin{eqnarray*}
  \lefteqn{\Omega(A,JA) \cdot \Omega(C,JC) - \Omega(A,JC) \cdot \Omega(C,JA)} \\[5pt]
  &=& \beta(A) \cdot \vartheta(JA) \cdot \beta(C) \cdot \vartheta(JC) - \beta(A) \cdot \vartheta(JC) \cdot \beta(C) \cdot \vartheta(JA) \;=\; 0 \;.
 \end{eqnarray*}

 By the contradiction, we get that $b_1\geq 2n-1$. In fact, $\mathfrak{g}$ being $2$-step, we have $b_1 = 2n-1$.

 By \cite[Th\'{e}or\`{e}me 1]{dixmier}, see also \cite[Corollary 2.5]{millionshchikov}, one has that $\Omega$ is $\de_\vartheta$-exact: let $\beta\in\wedge^1\mathfrak{g}^*$ be such that $\Omega = \de \beta - \vartheta \wedge \beta$. Since $\Omega^n=-n\vartheta\wedge\beta\wedge(\de\beta)^{n-1}$, we have that $\left.\de\beta\right\lfloor_{\left.\mathfrak{g}\middle\slash \R\left\langle \vartheta,\beta\right\rangle \right.}$ is non-degenerate. Then $\mathfrak{g}\simeq\heis_{2n-1}$.
\end{proof}

In view of the analogous result in \cite[Theorem 3.3]{angella-tomassini-1} for the non-conformal case, we have the following theorem.

\begin{thm}\label{thm:6-nilmfd-lcht--lck}
 Let $X$ be a $6$-dimensional nilmanifold endowed with a left-invariant complex structure. If $X$ admits a \lcht\ structure, then it admits also a \lcK\ structure. In particular, either it is diffeomorphic to a torus, namely, its Lie algebra $\mathfrak{g}$ is $\mathfrak{g}\simeq\mathfrak{h}_1 = (0,0,0,0,0,0)$, or $\mathfrak{g}\simeq\mathfrak{h}_3 = (0,0,0,0,0,12+34)$.
\end{thm}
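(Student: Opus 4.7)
\medskip

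\noindent\emph{Plan for the proof.} The plan is to combine the structural results of this section with Theorem~\ref{thm:nilmfd-2step-lcht--heisenberg} and a case-by-case inspection of the classification of $6$-dimensional nilpotent Lie algebras admitting a left-invariant complex structure due to S.~Salamon, as refined by L.~Ugarte.

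First I would apply Lemma~\ref{lemma:nilmfd-lcht--inv-lcht} to assume that the \lcht\ structure $\Omega$ and its Lee form $\vartheta$ are left-invariant, and then invoke Theorem~\ref{thm:nilmanifold-lcht--mapping-torus-contact} to split the argument according to whether $\vartheta$ vanishes identically or nowhere. In the \gcht\ subcase, left-invariance together with Nomizu's theorem forces $\vartheta=0$ (any invariant $\de$-exact $1$-form on a nilmanifold is zero), so $\Omega$ is in fact a holomorphic-tamed (i.e. taming symplectic) structure on $X$. By \cite[Theorem~3.3]{angella-tomassini-1}, the only $6$-dimensional nilmanifold with a left-invariant complex structure admitting such a structure is the torus; hence $\mathfrak{g}\simeq\mathfrak{h}_1$, which carries a flat K\"ahler (hence \lcK) metric.

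In the remaining subcase, $X$ is realised as a mapping torus over a compact $5$-dimensional contact nilmanifold $S$. Here I would use the classical fact that, among $5$-dimensional nilpotent Lie algebras, only $\heis_5$ admits a contact form: the condition $\de\alpha^{2}\wedge\alpha\neq 0$ forces $\de\alpha$ to have maximal rank $4$ on a codimension-one subspace, and a direct comparison with the Dixmier list of $5$-dimensional nilpotent Lie algebras singles out $\heis_5$. Therefore the Lie algebra $\mathfrak{g}$ of $X$ is a rank-one extension of $\heis_5$, and when the extension is central one has $\mathfrak{g}\simeq\heis_5\times\R = \mathfrak{h}_3$; on this algebra the \lcK\ structure of \cite[Main Theorem]{sawai} is explicit.

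The main obstacle is to exclude the non-central extensions, i.e.\ the genuinely $3$-step (or higher) algebras from the Salamon--Ugarte list. When $\mathfrak{g}$ happens to be $2$-step, Theorem~\ref{thm:nilmfd-2step-lcht--heisenberg} applied in dimension $2n=6$ immediately yields $\mathfrak{g}\simeq\heis_5\times\R=\mathfrak{h}_3$, so the issue is precisely the higher-step cases. For each such $\mathfrak{h}_k$ in the Salamon--Ugarte classification I would fix a generic left-invariant complex structure $J$ on $\mathfrak{h}_k$, a generic $\de$-closed $\vartheta\in\wedge^1\mathfrak{h}_k^*$, and a generic $\Omega\in\wedge^2\mathfrak{h}_k^*$, then impose the linear equation $\de_\vartheta\Omega = \de\Omega - \vartheta\wedge\Omega = 0$ together with the taming positivity of $g_J = \tfrac{1}{2}(\Omega(\sspace,J\ssspace)-\Omega(J\sspace,\ssspace))$. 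As in \cite[Theorem~3.3]{angella-tomassini-1} and \cite[Theorem~1.3]{enrietti-fino-vezzoni} for the holomorphic-tamed setting, a direct linear-algebraic computation on each $\mathfrak{h}_k$ with $k\geq 4$ will either give an over-determined system or force $g_J$ to be degenerate, ruling out these Lie algebras. Together with the two previous cases, this leaves exactly $\mathfrak{h}_1$ and $\mathfrak{h}_3$, proving both the \lcK\ existence and the diffeomorphism-type dichotomy.
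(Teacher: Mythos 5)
Your overall architecture is the same as the paper's (reduce to invariant data via Lemma~\ref{lemma:nilmfd-lcht--inv-lcht}, split on whether the Lee form vanishes, use \cite[Theorem 3.3]{angella-tomassini-1} in the exact case and the mapping-torus-over-a-contact-nilmanifold picture of Theorem~\ref{thm:nilmanifold-lcht--mapping-torus-contact} in the non-vanishing case), but two steps are genuinely broken. First, your claim that $\heis_5$ is the only $5$-dimensional nilpotent Lie algebra carrying a contact form is false: on $(0,0,0,12,14+23)$ the invariant form $\alpha=e^5$ satisfies $\de\alpha\wedge\de\alpha\wedge\alpha=2\,e^{12345}\neq0$, and likewise on $(0,0,12,13,14-23)$; the classification you need (cited in the paper as \cite[\S6]{kutsak}) gives exactly these three algebras. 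Consequently the fibre of the mapping torus need not be $\heis_5$, and after imposing the existence of a linear complex structure on $\g$ the surviving candidates are $\mathfrak{h}_3$, $\mathfrak{h}_9=(0,0,0,0,12,14+25)$ and $\mathfrak{h}_{19}^-=(0,0,0,12,23,14-35)$ --- the latter two being $3$-step, so Theorem~\ref{thm:nilmfd-2step-lcht--heisenberg} does not touch them. Your fallback of ``compute on every $\mathfrak{h}_k$ with $k\geq4$'' would in principle cover them, but the assertion that each such computation ``will either give an over-determined system or force $g_J$ to be degenerate'' is exactly the content that has to be proved; in the paper this is done case by case (Lemmas~\ref{lemma:h9-J-no-lcht} and \ref{lemma:h19-J+--no-lcht}), and the contradictions come from specific positivity constraints such as $\gamma^2 s^2+|\beta|^2t^2=0$, not from a generic over-determinacy count.

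Second, even once you land on $\g\simeq\mathfrak{h}_3$, citing Sawai does not finish the proof. The theorem asserts that $(X,J)$ itself admits a \lcK\ structure, and on $\mathfrak{h}_3$ there are two inequivalent invariant complex structures $J_+$ and $J_-$; by \cite[Main Theorem]{sawai} only $J_+$ carries a \lcK\ metric. You must therefore show that $J_-$ admits no \lcht\ structure at all (the paper's Lemma~\ref{lemma:h3-J--no-lcht}), otherwise $(X,J_-)$ would be a counterexample to the very implication you are proving. This exclusion is absent from your plan and cannot be waved away: it is another explicit linear-algebra computation of the same kind as the ones for $\mathfrak{h}_9$ and $\mathfrak{h}_{19}^-$.
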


\begin{proof}
 Let $\Omega$ be a \lcht\ structure on $X$. If the Lee form $\vartheta$ of $\Omega$ is exact, then, up to a global conformal change, $\Omega$ is a holomorphic-tamed structure on $X$. By \cite[Theorem 3.3]{angella-tomassini-1}, see also \cite[Theorem 1.3]{enrietti-fino-vezzoni}, $X$ admits in fact a K\"ahler structure, in particular, a \lcK\ structure. More precisely, $X$ is diffeomorphic to the $6$-dimensional torus, $\mathfrak{h}_1=(0,0,0,0,0,0)$, by \cite[Theorem A]{benson-gordon} or \cite[Theorem 1, Corollary]{hasegawa}.
 Therefore suppose now that $\vartheta$ is everywhere non-vanishing. By Theorem \ref{thm:nilmanifold-lcht--mapping-torus-contact}, then $X$ has a structure of mapping torus over a $5$-dimensional nilmanifold endowed with a contact structure. By \cite[\S6]{kutsak}, the only $5$-dimensional nilmanifolds admitting a contact structure are $(0,0,0,0,12+34)$, and $(0,0,0,12,14+23)$, and $(0,0,12,13,14-23)$. Moreover, the Lie algebra of $X$ has to admit a linear complex structure. It follows that the Lie algebra of $X$ has to be either $\mathfrak{h}_{3} = \mathfrak{s}_{32} = (0,0,0,0,0,12+34)$, or $\mathfrak{h}_{9} = \mathfrak{s}_{27} = (0,0,0,0,12,14+25)$, or $\mathfrak{h}_{19}^{-} = \mathfrak{s}_{16} = (0,0,0,12,23,14-35)$, in the notations of \cite{salamon, kutsak}.
 Hence, it remains to consider each case separately; in fact, they all are admissible by Theorem \ref{thm:nilmfd-2step-lcht--heisenberg}. (See also \cite[Proposition 5.6]{bazzoni-marrero-2}, where $6$-dimensional Lie algebras admitting \lcs\ structures are classified.) In \cite{ceballos-otal-ugarte-villacampa}, the left-invariant complex structures on $6$-dimensional nilmanifolds are classified up to equivalence.
 In particular, there is only one, up to equivalence, left-invariant complex structure $J$ on $\mathfrak{h}_9$. We prove in Lemma \ref{lemma:h9-J-no-lcht} than it does not admit any \lcht\ structure.
 And there are two left-invariant complex structures on $\mathfrak{h}_{3}$, which are called $J_+$ and $J_-$.
 By \cite[Main Theorem]{sawai}, $J_+$ on $\mathfrak{h}_{3}$ is the only left-invariant complex structure on $6$-dimensional nilmanifolds admitting a \lcK\ structure: for the sake of completeness, we recall the metric in Lemma \ref{lemma:h3-J+-lcht}. On the other side, we prove in Lemma \ref{lemma:h3-J--no-lcht} than $J_-$ on $\mathfrak{h}_3$ does not admit any \lcht\ structure.
 Finally, there are only two, up to equivalence, left-invariant complex structures, $J_+$ and $J_-$, on $\mathfrak{h}_{19}^{-}$.
 We prove in Lemma \ref{lemma:h19-J+--no-lcht} than they do not admit any \lcht\ structure.
 This concludes the proof.
\end{proof}

\begin{lemma}[{\cite[Main Theorem]{sawai}}]\label{lemma:h3-J+-lcht}
 Let $X$ be a $6$-dimensional nilmanifold with associated Lie algebra $\mathfrak{h}_{3} = (0,0,0,0,0,12+34)$ and endowed with the $G$-left-invariant complex structure $J_+$ defined by the $G$-left-invariant co-frame $\left\{\varphi^1,\varphi^2,\varphi^3\right\}$ of $T^{1,0}X$ with structure equations
 $$ \left\{ \begin{array}{l}
             \de\varphi^1 \;=\; 0 \\[5pt]
             \de\varphi^2 \;=\; 0 \\[5pt]
             \de\varphi^3 \;=\; \varphi^{1}\wedge\bar\varphi^{1} + \varphi^{2}\wedge\bar\varphi^{2} \;.
            \end{array}
    \right. $$
 Then $X$ admits a \lcK\ structure.
\end{lemma}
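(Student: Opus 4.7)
The plan is to produce an explicit $G$-left-invariant locally conformal K\"ahler form by writing down the diagonal Hermitian $(1,1)$-form in the given unitary coframe and guessing a Lee form built from the only non-closed generator $\varphi^3$. Concretely, I would propose
\[
 \omega \;:=\; \im\, \bigl( \varphi^{1}\wedge\bar\varphi^{1} + \varphi^{2}\wedge\bar\varphi^{2} + \varphi^{3}\wedge\bar\varphi^{3} \bigr)
 \qquad\text{and}\qquad
 \vartheta \;:=\; \varphi^{3}+\bar\varphi^{3} \;,
\]
so that $\omega$ is a real, positive $(1,1)$-form (each summand $\im\, \varphi^j\wedge\bar\varphi^j$ is a positive $(1,1)$-form) and $\vartheta$ is a real $G$-left-invariant $1$-form, hence descends to $X$.

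Next I would verify the two defining conditions of an \lcK\ structure directly from the structure equations. First, since $\de\varphi^{3}=\varphi^{1}\wedge\bar\varphi^{1}+\varphi^{2}\wedge\bar\varphi^{2}$ is of type $(1,1)$ and real-valued up to sign, conjugation gives $\de\bar\varphi^{3}=-(\varphi^{1}\wedge\bar\varphi^{1}+\varphi^{2}\wedge\bar\varphi^{2})$, so $\de\vartheta = \de\varphi^{3}+\de\bar\varphi^{3}=0$. Second, a short computation using $\de\varphi^{1}=\de\varphi^{2}=0$ and the bidegree manipulation $\varphi^{3}\wedge(\varphi^{j}\wedge\bar\varphi^{j})=(\varphi^{j}\wedge\bar\varphi^{j})\wedge\varphi^{3}$ yields
\[
 \de\omega \;=\; \im\,\bigl(\de\varphi^{3}\wedge\bar\varphi^{3}-\varphi^{3}\wedge\de\bar\varphi^{3}\bigr)
 \;=\; \im\,\bigl(\varphi^{1}\wedge\bar\varphi^{1}+\varphi^{2}\wedge\bar\varphi^{2}\bigr)\wedge(\varphi^{3}+\bar\varphi^{3})\;.
\]
On the other hand, since $(\varphi^{3}+\bar\varphi^{3})\wedge(\varphi^{3}\wedge\bar\varphi^{3})=0$, one computes $\vartheta\wedge\omega=\im\,(\varphi^{1}\wedge\bar\varphi^{1}+\varphi^{2}\wedge\bar\varphi^{2})\wedge(\varphi^{3}+\bar\varphi^{3})$, which matches $\de\omega$.

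Both conditions $\de\omega=\vartheta\wedge\omega$ and $\de\vartheta=0$ are therefore satisfied, and $\omega$ is a \lcK\ structure on $X$ with Lee form $\vartheta$. There is no real obstacle in this proof beyond choosing the correct ansatz for $\vartheta$; the rest is a routine verification using the structure equations. I would not bother to check whether the Lee form is $\de$-exact (it is not, by Nomizu's theorem, since $[\varphi^{3}+\bar\varphi^{3}]$ is a nonzero invariant class in $H^{1}_{dR}(X;\R)$), because the lemma only asserts the existence of a locally conformal K\"ahler structure.
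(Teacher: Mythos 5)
Your proposal is correct and is essentially identical to the paper's own proof: the same invariant form $\omega = \im\,(\varphi^1\wedge\bar\varphi^1+\varphi^2\wedge\bar\varphi^2+\varphi^3\wedge\bar\varphi^3)$ and Lee form $\vartheta=\varphi^3+\bar\varphi^3$ are used there, with the same direct verification of $\de\vartheta=0$ and $\de\omega=\vartheta\wedge\omega$ from the structure equations. The closing remark about non-exactness of $\vartheta$ is a harmless (and correct) addition not present in the paper.
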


\begin{proof}
 Consider the $2$-form
 $$ \omega \;:=\; \im \varphi^1\wedge\bar\varphi^1 + \im \varphi^2\wedge\bar\varphi^2 + \im \varphi^3\wedge\bar\varphi^3 \;\in\; \wedge^{1,1}X \cap \wedge^2X \;. $$
 It clearly tames $J$ and is $J$-compatible. One computes
 \begin{eqnarray*}
  \de\omega &=& \im \varphi^1\wedge\bar\varphi^1\wedge\bar\varphi^3 + \im \varphi^2\wedge\bar\varphi^2\wedge\bar\varphi^3 - \im \varphi^1\wedge\varphi^3\wedge\bar\varphi^1 - \im \varphi^2\wedge\varphi^3\wedge\bar\varphi^2 \\[5pt]
  &=& \left( \varphi^3 + \bar\varphi^3 \right) \wedge \omega \;,
 \end{eqnarray*}
 which proves that $\omega$ is a $G$-left-invariant \lcK\ structure on $X$ with Lee form $\vartheta:= \varphi^3 + \bar\varphi^3$.
\end{proof}

\begin{lemma}\label{lemma:h3-J--no-lcht}
 Let $X$ be a $6$-dimensional nilmanifold with associated Lie algebra $\mathfrak{h}_{3} = (0,0,0,0,0,12+34)$ and endowed with the $G$-left-invariant complex structure $J_-$ defined by the $G$-left-invariant co-frame $\left\{\varphi^1,\varphi^2,\varphi^3\right\}$ of $T^{1,0}X$ with structure equations
 $$ \left\{ \begin{array}{l}
             \de\varphi^1 \;=\; 0 \\[5pt]
             \de\varphi^2 \;=\; 0 \\[5pt]
             \de\varphi^3 \;=\; \varphi^{1}\wedge\bar\varphi^{1} - \varphi^{2}\wedge\bar\varphi^{2} \;.
            \end{array}
    \right. $$
 Then $X$ does not admit any \lcht\ structure.
\end{lemma}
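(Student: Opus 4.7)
The plan is a direct computation at the Lie-algebra level on $\mathfrak{h}_3$. Suppose for contradiction that $X$ carries a \lcht\ structure $\Omega$ with Lee form $\vartheta$. By Lemma \ref{lemma:nilmfd-lcht--inv-lcht} applied via Nomizu's theorem \cite[Theorem 1]{nomizu}, I may assume that both $\Omega$ and $\vartheta$ are $G$-left-invariant. If in addition $[\vartheta]=0\in H^1_{dR}(X;\R)$, then after a global conformal change $\Omega$ becomes a holomorphic-tamed structure on $(X,J_-)$; but no non-torus nilmanifold admits such, by \cite[Theorem 3.3]{angella-tomassini-1} (see also \cite[Theorem 1.3]{enrietti-fino-vezzoni}). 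Hence it suffices to rule out the case $\vartheta\ne 0$.

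The second step is to parameterize. Imposing $\de\vartheta=0$ on a left-invariant real $1$-form forces $\vartheta^{1,0}=p\varphi^1+q\varphi^2+r\varphi^3$ with $p,q\in\C$ and $r\in\R$. I then write the generic $G$-left-invariant real $2$-form as $\Omega=\Omega^{2,0}+\Omega^{1,1}+\overline{\Omega^{2,0}}$, with $\Omega^{2,0}=a_{12}\varphi^{12}+a_{13}\varphi^{13}+a_{23}\varphi^{23}$ and $\Omega^{1,1}$ described, in the style of Example \ref{ex:def-1-nakamura}, by diagonal coefficients $ir_1,ir_2,ir_3$ (with $r_j\in\R$) and off-diagonal complex entries $s_{12},s_{13},s_{23}$. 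The taming condition amounts to positivity of the Hermitian matrix associated with $\Omega^{1,1}$.

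The third step is to expand $\de\Omega=\vartheta\wedge\Omega$ in the basis of left-invariant $3$-forms and match coefficients type by type. The sub-case $r=0$ is immediate: the coefficients of $\varphi^{13\bar 3}$ and $\varphi^{23\bar 3}$ give $ipr_3=0=iqr_3$, so (since $\vartheta\ne 0$) one of $p,q$ is nonzero and $r_3=0$, contradicting positivity. For the main case $r\ne 0$, the same two coefficients yield $s_{13}-a_{13}=\tfrac{ipr_3}{r}$ and $s_{23}-a_{23}=\tfrac{iqr_3}{r}$, while the coefficients of $\varphi^{13\bar 1}$ and $\varphi^{23\bar 2}$ produce closed-form expressions for $\Im(\bar p\,a_{13})$ and $\Im(\bar q\,a_{23})$ respectively. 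Substituting these back, after a routine cancellation, yields the pair of identities
\[ |s_{13}|^2 \;=\; |a_{13}|^2+r_1r_3-\frac{r_3^2}{r}, \qquad |s_{23}|^2 \;=\; |a_{23}|^2+r_2r_3+\frac{r_3^2}{r}. \]

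The heart of the argument — and the step that carries all of the weight — is the incompatibility created by the \emph{opposite} signs in front of $r_3^2/r$ in the two identities. These signs descend directly from the minus sign in $\de\varphi^3=\varphi^{1\bar 1}-\varphi^{2\bar 2}$, which is precisely the discrepancy from $J_+$ (where the two signs coincide and the \lcK\ structure of Lemma \ref{lemma:h3-J+-lcht} becomes available). Positivity of the $(1,3)$ and $(2,3)$ principal $2\times 2$ minors of $\Omega^{1,1}$ reads $r_1r_3>|s_{13}|^2$ and $r_2r_3>|s_{23}|^2$; the first forces $r_3^2/r>|a_{13}|^2\ge 0$ and hence $r>0$, while the second forces $-r_3^2/r>|a_{23}|^2\ge 0$ and hence $r<0$. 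This contradiction completes the proof.
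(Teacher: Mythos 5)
Your proposal is correct and follows essentially the same route as the paper's: reduce to left-invariant data via Lemma \ref{lemma:nilmfd-lcht--inv-lcht}, expand $\de\Omega=\vartheta\wedge\Omega$ in the basis of invariant $3$-forms, and extract the contradiction from the coefficients of $\varphi^{13\bar3}$, $\varphi^{23\bar3}$, $\varphi^{13\bar1}$, $\varphi^{23\bar2}$ --- I verified your two identities $|s_{13}|^2=|a_{13}|^2+r_1r_3-r_3^2/r$ and $|s_{23}|^2=|a_{23}|^2+r_2r_3+r_3^2/r$ and they are right. The only organizational differences are that the paper first normalizes the Hermitian part via \cite[Proposition 2.3]{ugarte-villacampa} (so your $s_{13},s_{23}$ vanish) and then sums the two relevant equations to obtain $\gamma^2(r^2+s^2)+|\alpha|^2t^2+|\beta|^2t^2=0$, whereas you keep the general $(1,1)$-form and instead use positivity of the $\{1,3\}$ and $\{2,3\}$ principal minors to force $r>0$ and $r<0$ simultaneously.
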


\begin{proof}
 By Lemma \ref{lemma:nilmfd-lcht--inv-lcht}, it suffices to prove that there is no $G$-left-invariant \lcht\ structure on $X$ satisfying $\de_\vartheta\Omega=0$ with $\vartheta$ a $G$-left-invariant $1$-form.

 By \cite[Proposition 2.3]{ugarte-villacampa}, the $(1,1)$-form associated to a $G$-left-invariant Hermitian metric on $X$ can be written as
 $$ \im r^2\, \varphi^1\wedge\bar\varphi^1 + \im s^2\, \varphi^2\wedge\bar\varphi^2 + \im t^2\, \varphi^3\wedge\bar\varphi^3 + u\, \varphi^1\wedge\bar\varphi^2 - \bar u \, \varphi^2\wedge\bar\varphi^1 $$
 with $r,s,t\in\R\setminus\{0\}$ and $u\in\C$ such that $|u|^2<s^2$.
 Hence, consider the form
 \begin{eqnarray*}
  \Omega &:=&
       A \, \varphi^1\wedge\varphi^2 + B \, \varphi^1\wedge\varphi^3 + C \, \varphi^2\wedge\varphi^3 \\[5pt]
  && + \im r^2\, \varphi^1\wedge\bar\varphi^1 + \im s^2\, \varphi^2\wedge\bar\varphi^2 + \im t^2\, \varphi^3\wedge\bar\varphi^3 + u\, \varphi^1\wedge\bar\varphi^2 - \bar u \, \varphi^2\wedge\bar\varphi^1 \\[5pt]
  && + \bar A \, \bar\varphi^1\wedge\bar\varphi^2 + \bar B \, \bar\varphi^1\wedge\bar\varphi^3 + \bar C \, \bar\varphi^2\wedge\bar\varphi^3
 \end{eqnarray*}
 with $r,s,t\in\R\setminus\{0\}$ and $u\in\C$ such that $|u|^2<s^2$ and $A,B,C\in\C$.

 Consider a $\de$-closed left-invariant $1$-form $\vartheta\in\wedge^1\mathfrak{g}^*$. It is of the form
 $$ \vartheta \;=\; \alpha\, \varphi^1 + \beta\, \varphi^2 + \gamma\, \varphi^3 + \bar\alpha\, \bar\varphi^1  + \bar\beta\, \bar\varphi^2 + \gamma\, \bar\varphi^3 $$
 for $\alpha,\beta\in\C$ and $\gamma\in\R$.

 For $(p,q)\in\Z^2$, denote by $\pi_{\wedge^{p,q}X} \colon \wedge^{\bullet,\bullet}X \to \wedge^{p,q}X$ the natural projection. We compute
 $$ \pi_{\wedge^{3,0}X} \de_\vartheta \Omega \;=\; \left( - \alpha C + \beta B - \gamma A \right) \, \varphi^{123} $$
 and
 \begin{eqnarray*}
  \pi_{\wedge^{2,1}X} \de_\vartheta \Omega &=&
       \left( \im \beta r^2 + \alpha \bar u - \bar \alpha A \right) \, \varphi^{12\bar1} + \left( - \im \alpha s^2 + \beta u - \bar \beta A + B + C \right) \, \varphi^{12\bar2} + \left( - \gamma A \right) \, \varphi^{12\bar3} \\[5pt]
  && + \left( - \im t^2 + \im \gamma r^2 - \bar\alpha B \right) \, \varphi^{13\bar1} + \left( \gamma u - \bar\beta B \right) \, \varphi^{13\bar2} + \left( - \im \alpha t^2 - \gamma B \right) \, \varphi^{13\bar3} \\[5pt]
  && + \left( - \gamma \bar u - \bar\alpha C \right) \, \varphi^{23\bar1} + \left( \im t^2 + \im \gamma s^2 - \bar\beta C \right) \, \varphi^{23\bar2} + \left( - \im \beta t^2 - \gamma C \right) \, \varphi^{23\bar3} \;.
 \end{eqnarray*}
 (As a matter of notation, we have shortened, e.g., $\varphi^{12\bar3}:=\varphi^{1}\wedge\varphi^{2}\wedge\bar\varphi^{3}$.)

 We have to find $\vartheta$ as above such that $\de_\vartheta \Omega=0$.
 Note that $\gamma\neq0$: otherwise, from the coefficient of $\varphi^{23\bar3}$, we get that also $\beta=0$, which yields that the coefficient of $\varphi^{23\bar2}$ is non-zero.
 Looking at the coefficients of $\varphi^{13\bar3}$ and of $\varphi^{23\bar3}$, we get
 $$ B \;=\; - \im \frac{\alpha t^2}{\gamma} \qquad \text{ and } \qquad C \;=\; - \im \frac{\beta t^2}{\gamma} \;. $$
 By summing the coefficients of $\varphi^{13\bar1}$ and $\varphi^{23\bar2}$ and substituting and simplifying, we get
 $$ \gamma^2 (r^2+s^2) + |\alpha|^2 t^2 + |\beta|^2 t^2 \;=\;0 \;, $$
 which is not possible. Hence, there exists no ($G$-left-invariant) \lcht\ structure on $X$.
\end{proof}

\begin{lemma}\label{lemma:h9-J-no-lcht}
 Let $X$ be a $6$-dimensional nilmanifold with associated Lie algebra $\mathfrak{h}_{9} = (0,0,0,0,12,14+25)$ and endowed with the $G$-left-invariant complex structure $J_-$ defined by the $G$-left-invariant co-frame $\left\{\varphi^1,\varphi^2,\varphi^3\right\}$ of $T^{1,0}X$ with structure equations
 $$ \left\{ \begin{array}{l}
             \de\varphi^1 \;=\; 0 \\[5pt]
             \de\varphi^2 \;=\; \varphi^{1}\wedge\bar\varphi^{1} \\[5pt]
             \de\varphi^3 \;=\; \varphi^{1}\wedge\bar\varphi^{2} + \varphi^{2}\wedge\bar\varphi^{1} \;.
            \end{array}
    \right. $$
 Then $X$ does not admit any \lcht\ structure.
\end{lemma}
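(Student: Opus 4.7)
The plan is to follow the scheme of Lemma \ref{lemma:h3-J--no-lcht}. By Lemma \ref{lemma:nilmfd-lcht--inv-lcht} (applicable via Nomizu's theorem \cite[Theorem 1]{nomizu}), it suffices to show that no $G$-left-invariant \lcht\ form $\Omega$ exists with $G$-left-invariant Lee form $\vartheta$. I would therefore write the most general candidate non-degenerate real $2$-form taming $J_-$ as
\begin{align*}
\Omega &= A\, \varphi^{12} + B\, \varphi^{13} + C\, \varphi^{23} + \overline{A\, \varphi^{12} + B\, \varphi^{13} + C\, \varphi^{23}} \\
&\quad +\im\, r^2\, \varphi^{1\bar1} + \im\, s^2\, \varphi^{2\bar2} + \im\, t^2\, \varphi^{3\bar3}\\
&\quad + u_{12}\varphi^{1\bar2} - \bar u_{12}\varphi^{2\bar1} + u_{13}\varphi^{1\bar3} - \bar u_{13}\varphi^{3\bar1} + u_{23}\varphi^{2\bar3} - \bar u_{23}\varphi^{3\bar2},
\end{align*}
with $r,s,t\in\R\setminus\{0\}$, $A,B,C,u_{ij}\in\C$, together with the Hermitian-positivity inequalities on the $(1,1)$-block that make the associated $g$ a $J_-$-Hermitian metric (compare \cite[Proposition 2.3]{ugarte-villacampa}).

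Next I would determine the space of $\de$-closed left-invariant real $1$-forms. Writing $\vartheta = \alpha\varphi^1 + \beta\varphi^2 + \gamma\varphi^3 + \bar\alpha\bar\varphi^1 + \bar\beta\bar\varphi^2 + \bar\gamma\bar\varphi^3$, the structure equations give
$$ \de\vartheta = (\alpha-\bar\alpha)\,\varphi^{1\bar1}\text{-type terms from }\de\varphi^2,\de\varphi^3; $$
a direct check shows that $\de\vartheta = 0$ forces $\beta,\gamma\in\R$ (while $\alpha\in\C$ is free), because $\de\varphi^2$ contributes $\beta\,\varphi^{1\bar1}$ against $\bar\beta\,\varphi^{1\bar1}$, and $\de\varphi^3$ couples $\gamma$ to $\bar\gamma$ through the mixed term $\varphi^{1\bar2}+\varphi^{2\bar1}$.

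I would then impose $\de_\vartheta\Omega = \de\Omega - \vartheta\wedge\Omega = 0$ and, exactly as in the proof of Lemma \ref{lemma:h3-J--no-lcht}, project onto the bidegree components $(3,0)$ and $(2,1)$; the $(0,3)$- and $(1,2)$-projections are conjugate and yield no new information. The $(3,0)$-projection is a single complex scalar equation linear in $(\alpha,\beta,\gamma,A,B,C)$, while the $(2,1)$-projection produces nine scalar complex equations, one for each generator $\varphi^{ij\bar k}$ with $i<j$, each combining a structure-equation contribution from $\de\Omega$ with the appropriate term of $\vartheta\wedge\Omega$.

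The main obstacle will be the explicit algebraic analysis of this enlarged system, which is heavier than in Lemma \ref{lemma:h3-J--no-lcht} because the mixed term $\varphi^{1\bar2}+\varphi^{2\bar1}$ appearing in $\de\varphi^3$ couples $\alpha$ with $\beta$ and the parameters $B,C$ with $u_{12}$. The strategy I would follow is: first argue that $\gamma\neq 0$, handling the degenerate case $\gamma=0$ (which then forces $\beta=0$ via the $\varphi^{i3\bar3}$-coefficient equations) by reaching a direct contradiction with $t^2>0$; then use the $(3,0)$-equation together with the coefficients of $\varphi^{13\bar3}$ and $\varphi^{23\bar3}$ to solve for $B$ and $C$ as multiples of $t^2/\gamma$ in terms of $\alpha$, $\beta$, $A$; and finally substitute these expressions into a well-chosen real linear combination of the $\varphi^{13\bar1}$- and $\varphi^{23\bar2}$-coefficient equations. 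Mirroring the $\mathfrak{h}_3$ argument, I expect this to produce an identity of the form
$$ \gamma^2 (r^2 + s^2) + t^2\, Q(\alpha,\beta,u_{12}) \;=\; 0, $$
where $Q$ is a positive-semidefinite quadratic form in the remaining parameters under the Hermitian-positivity inequalities on $r^2,s^2,|u_{12}|^2$, and in fact strictly positive when combined with $\gamma\neq0$ and $r^2+s^2>0$. This yields the contradiction, ruling out any \lcht\ structure on $(X,J_-)$.
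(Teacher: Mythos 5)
Your proposal follows the same overall route as the paper's proof: reduce to left-invariant data via Lemma \ref{lemma:nilmfd-lcht--inv-lcht}, write the general invariant $2$-form taming $J$ and the general $\de$-closed invariant $1$-form, impose $\de_\vartheta\Omega=0$ componentwise, and extract a sum of manifestly positive terms equal to zero. The details diverge in two places worth noting. First, the paper invokes \cite[Proposition 2.3]{ugarte-villacampa} not merely for the positivity inequalities but to normalize the $(1,1)$-part so that the coefficients you call $u_{13}$ and $u_{23}$ vanish; with that normalization the endgame is much shorter than you anticipate: for $\gamma\neq0$ the $\varphi^{23\bar3}$-coefficient gives $C=-\im\,\beta t^2/\gamma$, and substituting into the $\varphi^{23\bar2}$-coefficient yields $\gamma^2 s^2+|\beta|^2 t^2=0$ directly. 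In particular the contradiction does not have the form $\gamma^2(r^2+s^2)+t^2\,Q(\alpha,\beta,u_{12})=0$ you predict by analogy with $\mathfrak{h}_3$; the $\varphi^{13\bar1}$-equation and the parameters $r$, $\alpha$, $u_{12}$ play no role here. Second, if you insist on carrying the unnormalized Hermitian form, the same two equations acquire a cross-term $2\gamma\,\Im(\bar\beta\, u_{23})$, and to conclude you must additionally use the positivity inequality $s^2t^2>|u_{23}|^2$ together with an arithmetic--geometric mean estimate; this does work (so there is no genuine gap), but it is extra effort the normalization spares you. Your treatment of the degenerate case $\gamma=0$ (killing $\alpha$ and $\beta$ via the $\varphi^{13\bar3}$- and $\varphi^{23\bar3}$-coefficients and then contradicting $t^2\neq0$ through the $\varphi^{23\bar1}$-coefficient) agrees with the paper's.
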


\begin{proof}
 By Lemma \ref{lemma:nilmfd-lcht--inv-lcht}, it suffices to prove that there is no $G$-left-invariant \lcht\ structure on $X$ satisfying $\de_\vartheta\Omega=0$ with $\vartheta$ a $G$-left-invariant $1$-form.

 By \cite[Proposition 2.3]{ugarte-villacampa}, the $(1,1)$-form associated to a $G$-left-invariant Hermitian metric on $X$ can be written as
 $$ \im r^2\, \varphi^1\wedge\bar\varphi^1 + \im s^2\, \varphi^2\wedge\bar\varphi^2 + \im t^2\, \varphi^3\wedge\bar\varphi^3 + u\, \varphi^1\wedge\bar\varphi^2 - \bar u \, \varphi^2\wedge\bar\varphi^1 $$
 with $r,s,t\in\R\setminus\{0\}$ and $u\in\C$ such that $|u|^2<s^2$.
 Hence, consider the form
 \begin{eqnarray*}
  \Omega &:=&
       A \, \varphi^1\wedge\varphi^2 + B \, \varphi^1\wedge\varphi^3 + C \, \varphi^2\wedge\varphi^3 \\[5pt]
  && + \im r^2\, \varphi^1\wedge\bar\varphi^1 + \im s^2\, \varphi^2\wedge\bar\varphi^2 + \im t^2\, \varphi^3\wedge\bar\varphi^3 + u\, \varphi^1\wedge\bar\varphi^2 - \bar u \, \varphi^2\wedge\bar\varphi^1 \\[5pt]
  && + \bar A \, \bar\varphi^1\wedge\bar\varphi^2 + \bar B \, \bar\varphi^1\wedge\bar\varphi^3 + \bar C \, \bar\varphi^2\wedge\bar\varphi^3
 \end{eqnarray*}
 with $r,s,t\in\R\setminus\{0\}$ and $u\in\C$ such that $|u|^2<s^2$ and $A,B,C\in\C$.

 Consider a $\de$-closed left-invariant $1$-form $\vartheta\in\wedge^1\mathfrak{g}^*$. It is of the form
 $$ \vartheta \;=\; \alpha\, \varphi^1 + \beta\, \varphi^2 + \gamma\, \varphi^3 + \bar\alpha\, \bar\varphi^1  + \bar\beta\, \bar\varphi^2 + \gamma\, \bar\varphi^3 $$
 for $\alpha,\beta\in\C$ and $\gamma\in\R$.

 For $(p,q)\in\Z^2$, denote by $\pi_{\wedge^{p,q}X} \colon \wedge^{\bullet,\bullet}X \to \wedge^{p,q}X$ the natural projection. We compute
 $$ \pi_{\wedge^{3,0}X} \Omega \;=\; \left( -\alpha C + \beta B - \gamma A \right) \, \varphi^{123} $$
 and
 \begin{eqnarray*}
  \pi_{\wedge^{2,1}X} \Omega &=&
       \left( - \im s^2 - B + \alpha \bar u + \im \beta r^2 - \bar \alpha A \right) \, \varphi^{12\bar1} + \left( C - \im \alpha s^2 + \beta u - \bar\beta A \right) \, \varphi^{12\bar2} + \left( - \gamma A \right) \, \varphi^{12\bar3} \\[5pt]
  && + \left( - C + \im \gamma r^2 - \bar\alpha B \right) \, \varphi^{13\bar1} + \left( - \im t^2 + \gamma u - \bar\beta B \right) \, \varphi^{13\bar2} + \left( - \im \alpha t^2 - \gamma B \right) \, \varphi^{13\bar3} \\[5pt]
  && + \left( - \im t^2 - \gamma \bar u - \bar\alpha C \right) \, \varphi^{23\bar1} + \left( \im \gamma s^2 - \bar\beta C \right) \, \varphi^{23\bar2} + \left( - \im \beta t^2 - \gamma C \right) \, \varphi^{23\bar3} \;.
 \end{eqnarray*}
 (As a matter of notation, we have shortened, e.g., $\varphi^{12\bar3}:=\varphi^{1}\wedge\varphi^{2}\wedge\bar\varphi^{3}$.)

 Note that $\gamma\neq0$: otherwise, from the coefficient of $\varphi^{13\bar3}$ we get that also $\alpha=0$, which yields that the coefficient of $\varphi^{23\bar1}$ is non-zero.
 Looking at the coefficient of $\varphi^{23\bar3}$, we get
 $$ C \;=\; - \im \frac{\beta t^2}{\gamma} \;. $$
 By substituting in the coefficient of $\varphi^{23\bar2}$ and simplifying, we get
 $$ \gamma^2 s^2 + |\beta|^2 t^2 \;=\;0 \;, $$
 which is not possible. Hence, there exists no ($G$-left-invariant) \lcht\ structure on $X$.
\end{proof}

\begin{lemma}\label{lemma:h19-J+--no-lcht}
 Let $X$ be a $6$-dimensional nilmanifold with associated Lie algebra $\mathfrak{h}_{19}^{-} = (0,0,0,12,23,14-35)$ and endowed with the $G$-left-invariant complex structure $J_+$, respectively $J_-$, defined by the $G$-left-invariant co-frame $\left\{\varphi^1,\varphi^2,\varphi^3\right\}$ of $T^{1,0}X$ with structure equations, respectively,
 $$ \left\{ \begin{array}{l}
             \de\varphi^1 \;=\; 0 \\[5pt]
             \de\varphi^2 \;=\; \varphi^{1}\wedge\varphi^{3}+\varphi^{1}\wedge\bar\varphi^{3} \\[5pt]
             \de\varphi^3 \;=\; \pm\im\left(\varphi^{1}\wedge\bar\varphi^{2}-\varphi^{2}\wedge\bar\varphi^{1}\right) \;.
            \end{array}
    \right. $$
 Then $X$ does not admit any \lcht\ structure.
\end{lemma}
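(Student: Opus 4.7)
The plan is to follow the template already used in Lemmas \ref{lemma:h3-J--no-lcht} and \ref{lemma:h9-J-no-lcht}. By Lemma \ref{lemma:nilmfd-lcht--inv-lcht}, it suffices to exclude the existence of any $G$-left-invariant \lcht\ structure $\Omega$ satisfying $\de\Omega = \vartheta\wedge\Omega$ for some $G$-left-invariant $\de$-closed $1$-form $\vartheta$.

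The first step is to pin down $\vartheta$. Writing $\vartheta = \alpha\varphi^1 + \beta\varphi^2 + \gamma\varphi^3 + \bar\alpha\bar\varphi^1 + \bar\beta\bar\varphi^2 + \bar\gamma\bar\varphi^3$ with $\alpha,\beta,\gamma\in\C$ and imposing $\de\vartheta = 0$, the structure equations of $\mathfrak{h}_{19}^{-}$ show that the $(2,0)$-component of $\de\vartheta$ equals $\beta\,\varphi^{13}$, which forces $\beta = 0$; the $(1,1)$-component is then $\pm\im(\gamma-\bar\gamma)(\varphi^{1\bar 2}-\varphi^{2\bar 1})$, which forces $\gamma\in\R$. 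Hence $\vartheta = \alpha\varphi^1 + \gamma\varphi^3 + \bar\alpha\bar\varphi^1 + \gamma\bar\varphi^3$. Next, I would parametrize $\Omega$ as the most general real left-invariant $J_\pm$-taming $2$-form, using \cite[Proposition 2.3]{ugarte-villacampa} for the $(1,1)$-component:
\[
\Omega \;=\; A\varphi^{12} + B\varphi^{13} + C\varphi^{23} + \im r^{2}\varphi^{1\bar 1} + \im s^{2}\varphi^{2\bar 2} + \im t^{2}\varphi^{3\bar 3} + u\,\varphi^{1\bar 2} - \bar u\,\varphi^{2\bar 1} + \bar A\varphi^{\bar 1\bar 2} + \bar B\varphi^{\bar 1\bar 3} + \bar C\varphi^{\bar 2\bar 3},
\]
with $A,B,C,u\in\C$ and $r,s,t\in\R\setminus\{0\}$ subject to the positivity inequalities.

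I would then compute $\de_\vartheta\Omega = \de\Omega - \vartheta\wedge\Omega$ using the structure equations and project $\de_\vartheta\Omega = 0$ onto bidegrees $(3,0)$ and $(2,1)$, the remaining bidegrees producing the conjugate equations. The $(3,0)$-component gives one scalar relation among $A,B,C,\alpha,\gamma$. In the $(2,1)$-component, I expect the coefficients of the forms $\varphi^{j3\bar 3}$ and $\varphi^{jk\bar k}$ to let me solve for $A$, $B$, $C$ in terms of $\alpha$, $\gamma$, $t^{2}$ (with a $\pm$ depending on the choice $J_{\pm}$). Substituting back into the diagonal equations (those collecting $\varphi^{13\bar 1}$, $\varphi^{23\bar 2}$, $\varphi^{12\bar 1}$, and $\varphi^{12\bar 2}$) and summing in the right combination, I expect, by analogy with the contradictions in Lemmas \ref{lemma:h3-J--no-lcht} and \ref{lemma:h9-J-no-lcht}, to reach an identity of the shape
\[
\gamma^{2}\,(\text{positive combination of } r^{2},s^{2}) + |\alpha|^{2}\,t^{2} \;=\; 0,
\]
which is impossible since $r,s,t$ are nonzero. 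The two signs $J_{\pm}$ can be carried uniformly through the computation because $\pm$ affects only the intermediate off-diagonal terms and squares away in the final obstruction.

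The main obstacle will be bookkeeping. Since both $\de\varphi^{2}$ and $\de\varphi^{3}$ are nonzero (unlike in the $\mathfrak{h}_{3}$ case), and $\de\varphi^{2}$ has both a $(2,0)$- and a $(1,1)$-piece, the expansion of $\de_{\vartheta}\Omega$ in bidegree $(2,1)$ produces considerably more nonzero coefficients than in the earlier two lemmas. The real skill is in selecting the minimal subset of coefficient equations from which the positive-sum-of-squares obstruction can be extracted, rather than solving the full overdetermined linear system directly.
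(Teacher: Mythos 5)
Your overall strategy (reduce to left-invariant data via Lemma \ref{lemma:nilmfd-lcht--inv-lcht}, determine the closed $1$-forms $\vartheta=\alpha\varphi^1+\gamma\varphi^3+\bar\alpha\bar\varphi^1+\gamma\bar\varphi^3$ with $\gamma\in\R$, expand $\de_\vartheta\Omega$ by bidegree, and extract a contradiction from a few coefficients) is exactly the paper's, and your computation of $\vartheta$ is correct. However, there is a genuine gap in your ansatz for $\Omega$: you restrict the $(1,1)$-component to the reduced form of \cite[Proposition 2.3]{ugarte-villacampa}, with only the single off-diagonal term $u\,\varphi^{1\bar2}-\bar u\,\varphi^{2\bar1}$. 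That normalization is available (and is used in the paper) for the \emph{nilpotent} complex structures on $\mathfrak{h}_3$ and $\mathfrak{h}_9$, but the structures $J_\pm$ on $\mathfrak{h}_{19}^-$ are non-nilpotent and their structure equations above are already a rigid normal form, so one cannot simultaneously normalize the metric. The paper accordingly keeps the full generic positive $(1,1)$-form with all three off-diagonal coefficients $u$, $v$, $z$. With your restricted ansatz you would indeed reach a contradiction (e.g.\ the coefficient of $\varphi^{13\bar2}$ becomes $\im s^2\pm t^2\neq0$ once $\gamma=0$), but this only excludes a proper subfamily of Hermitian metrics and does not prove the lemma; the coefficients $v$ and $z$ genuinely enter the relevant equations.

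A secondary point: the final obstruction is not of the sum-of-squares shape $\gamma^2(\cdots)+|\alpha|^2t^2=0$ you anticipate by analogy with the earlier two lemmas. In the paper the coefficient of $\varphi^{23\bar2}$ is simply $\im\gamma s^2$, forcing $\gamma=0$ outright; then $\alpha\neq0$, so the coefficients of $\varphi^{123}$ and $\varphi^{12\bar3}$ give $C=0$ and $v=0$, and the coefficient of $\varphi^{12\bar2}$, namely $\im\left(\pm C\mp v-\alpha s^2\right)=-\im\,\alpha s^2$, cannot vanish. Since you flagged the shape of the contradiction only as an expectation, the essential correction you need is to carry the full metric parameters $u,v,z$ through the computation.
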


\begin{proof}
 By Lemma \ref{lemma:nilmfd-lcht--inv-lcht}, it suffices to prove that there is no $G$-left-invariant \lcht\ structure on $X$ satisfying $\de_\vartheta\Omega=0$ with $\vartheta$ a $G$-left-invariant $1$-form.
 Hence, consider the form
 \begin{eqnarray*}
  \Omega &:=&
       A \, \varphi^1\wedge\varphi^2 + B \, \varphi^1\wedge\varphi^3 + C \, \varphi^2\wedge\varphi^3 \\[5pt]
  && + \im r^2\, \varphi^1\wedge\bar\varphi^1 + \im s^2\, \varphi^2\wedge\bar\varphi^2 + \im t^2\, \varphi^3\wedge\bar\varphi^3  \\[5pt]
  && + u\, \varphi^1\wedge\bar\varphi^2 - \bar u \, \varphi^2\wedge\bar\varphi^1
  + v\, \varphi^2\wedge\bar\varphi^3 - \bar v \, \varphi^3\wedge\bar\varphi^2
  + z\, \varphi^1\wedge\bar\varphi^3 - \bar z \, \varphi^3\wedge\bar\varphi^1 \\[5pt]
  && + \bar A \, \bar\varphi^1\wedge\bar\varphi^2 + \bar B \, \bar\varphi^1\wedge\bar\varphi^3 + \bar C \, \bar\varphi^2\wedge\bar\varphi^3
 \end{eqnarray*}
 with $A,B,C,u,v,z\in\C$ and $r,s,t\in\R\setminus\{0\}$ satisfying the restrictions that ensure that the $(1,1)$-component of $\Omega$ is positive.

 Consider a $\de$-closed left-invariant $1$-form $\vartheta\in\wedge^1\mathfrak{g}^*$. It is of the form
 $$
 \vartheta \;=\; \alpha\, \varphi^1 + \gamma\, \varphi^3 + \bar\alpha\, \bar\varphi^1 + \gamma\, \bar\varphi^3
 $$
 for $\alpha\in\C$ and $\gamma\in\R$.

 For $(p,q)\in\Z^2$, denote by $\pi_{\wedge^{p,q}X} \colon \wedge^{\bullet,\bullet}X \to \wedge^{p,q}X$ the natural projection. We compute
 $$ \pi_{\wedge^{3,0}X} \de_\vartheta \Omega \;=\; -\left(\alpha C + \gamma A \right) \, \varphi^{123} $$
 and
 \begin{eqnarray*}
  \pi_{\wedge^{2,1}X} \de_\vartheta \Omega &=&
       \left( \pm\im B \mp\im z +\alpha\bar u-\bar\alpha A \right) \, \varphi^{12\bar1}
       + \im \, \left( \pm C \mp v-\alpha s^2 \right) \, \varphi^{12\bar2} \\[5pt]
  &&   - \left( \alpha v + \gamma A \right) \, \varphi^{12\bar3}
  + \left( u-\bar u+\alpha\bar z+ \im\gamma r^2-\bar\alpha B \right) \, \varphi^{13\bar1} \\[5pt]
  && + \left( \im s^2\pm t^2 + \alpha\bar v + \gamma u \right) \, \varphi^{13\bar2}
  + \left( v-C-\im\alpha t^2 + \gamma z -\gamma B \right) \, \varphi^{13\bar3} \\[5pt]
  && + \left( \im s^2\mp t^2-\gamma\bar u-\bar\alpha C \right) \, \varphi^{23\bar1}
  + \im \gamma s^2 \, \varphi^{23\bar2}
  + \gamma \, \left( v-C \right) \, \varphi^{23\bar3} \;.
 \end{eqnarray*}
 (As a matter of notation, we have shortened, e.g., $\varphi^{12\bar3}:=\varphi^{1}\wedge\varphi^{2}\wedge\bar\varphi^{3}$.)

 We have to find $\vartheta$ as above such that $\de_\vartheta \Omega=0$.
 Note that the coefficient of $\varphi^{23\bar2}$ vanishes if and only if $\gamma=0$.
 Thus, the coefficient of $\varphi^{123}$ vanishes when $\alpha C=0$. Since $\gamma=0$ we have that $\alpha\not=0$, so $C=0$.
 Similarly, the coefficient of $\varphi^{12\bar3}$ is zero when $\alpha v=0$, so we get $v=0$. But taking $C=v=0$
 one has that the coefficient of $\varphi^{12\bar2}$ does not vanish, because $\alpha\not=0$ and $s^2>0$.
 Hence, there exists no ($G$-left-invariant) \lcht\ structure on $X$.
\end{proof}

In view of the questions in \cite[page 678]{li-zhang} and \cite[Question 1.7]{streets-tian}, and
of the analogous result in \cite[Theorem 1.3]{enrietti-fino-vezzoni} for the non-conformal case, the following question is hence natural.
By Theorem \ref{thm:6-nilmfd-lcht--lck}, it has a positive answer for $6$-dimensional nilmanifolds with left-invariant complex structures.
Also, the differentiable obstruction in Theorem \ref{thm:nilmfd-2step-lcht--heisenberg} suggests an evidence for the question
in case of $2$-step nilmanifolds.

\begin{question}\label{conj:nilmfd-lcht--lck}
For which compact complex manifolds, the existence of locally conformal holomorphic-tamed structures is equivalent to the existence of locally conformal K\"ahler structures?
\end{question}

\begin{rem}
 Consider the deformations in case {\itshape (1)} of the holomorphically parallelizable Nakamura manifold as investigated in \cite{angella-kasuya-2}, see Example \ref{ex:def-1-nakamura}. (Recall that they satisfy the $\del\delbar$-Lemma for $t\neq0$.) We claim that they do not admit any \lcht\ structure. In particular, they do not admit any holomorphic-tamed structure. Indeed, by \cite[\S7]{kasuya-local}, one has $H^1_{dR}(X;\R) = \R \left\langle \de z_1,\, \de \bar z_1 \right\rangle$, and hence the natural map $H^1(\mathfrak{g}) \to H^1_{dR}(X;\R)$ is an isomorphism. Hence, by Lemma \ref{lemma:nilmfd-lcht--inv-lcht}, it suffices to prove that these deformations do not admit any left-invariant \lcht\ structure. Consider the $G$-left-invariant real $2$-form
 \begin{eqnarray*}
  \Omega_t &:=& \im\, \left( A\, \phi^1_t \wedge \bar\phi^1_t + B\, \phi^2_t \wedge \bar\phi^2_t + C\, \phi^3_t \wedge \bar\phi^3_t \right) \\[5pt]
  && + \left( D\, \phi^1_t \wedge \bar\phi^2_t - \bar D\, \phi^2_t \wedge \bar\phi^1_t \right)  + \left( E\, \phi^1_t \wedge \bar\phi^3_t - \bar E\, \phi^3_t \wedge \bar\phi^1_t \right) + \left( F\, \phi^2_t \wedge \bar\phi^3_t - \bar F\, \phi^3_t \wedge \bar\phi^2_t \right) \\[5pt]
  && + \left( L\, \phi^1_t \wedge \phi^2_t + \bar L\, \bar\phi^1_t \wedge \bar\phi^2_t \right) + \left( M\, \phi^1_t \wedge \phi^3_t + \bar M\, \bar\phi^1_t \wedge \bar\phi^3_t \right) + \left( N\, \phi^2_t \wedge \phi^3_t + \bar N\, \bar\phi^2_t \wedge \bar\phi^3_t \right) \;,
 \end{eqnarray*}
 with $A,B,C\in\R$ and $D,E,F\in\C$. The form $\Omega_t$ tames the complex structure $J_t$ (i.e., its $(1,1)$-component with respect to $J_t$ is a positive $(1,1)$-form) if and only if, \cite[page 189]{ugarte},
 $$ \left\{\begin{array}{l}
            A > 0 \\[5pt]
            B > 0 \\[5pt]
            C > 0 \\[5pt]
            AB > |D|^2 \\[5pt]
            AC > |E|^2 \\[5pt]
            BC > |F|^2 \\[5pt]
            ABC + 2\, \Re(\im \bar D E \bar F) > C\,|D|^2 + A\,|F|^2 + B\,|E|^2
    \end{array}\right. \;.
 $$
 A straightforward computation gives
 \begin{eqnarray*}
  \de \Omega_t &=& \left( \bar D - t\, L \right)\, \phi_t^{12\bar1} - \im\, B\, \left( 1 + \bar t \right)\, \phi_t^{12\bar2} - F \left( 1 - \bar t \right)\, \phi_t^{12\bar3} \\[5pt]
  && + \left( - \bar E + t\, M \right)\, \phi_t^{13\bar1} - \bar F\, \left( 1 - \bar t \right)\, \phi_t^{13\bar2} + \im\, C\, \left( 1 + \bar t \right)\, \phi_t^{13\bar3} \\[5pt]
  && + \left( D - \bar t\, \bar L \right)\, \phi_t^{1\bar1\bar2} + \im\, B\, \left( 1 + t \right)\, \phi_t^{2\bar1\bar2} - \bar F\, \left( 1 - t \right)\, \phi_t^{3\bar1\bar2} \\[5pt]
  && + \left( - E + \bar t\, \bar M \right)\, \phi_t^{1\bar1\bar3} - F\, \left( 1 - t \right)\, \phi_t^{2\bar1\bar3} - \im\, C\, \left( 1 + t \right)\, \phi_t^{} \;,
 \end{eqnarray*}
 where we have shortened, e.g., $\phi^{12\bar1\bar3}_t \;:=\; \phi^{1}_{t}\wedge\phi^{2}_{t}\wedge\bar\phi^{1}_{t}\wedge\bar\phi^{3}_{t}$.
 In particular, since $B>0$, then $\Omega_t$ cannot be $\de$-closed for $t$ small.
\end{rem}

\subsection{Locally conformal holomorphic-tamed structures on \texorpdfstring{$4$}{4}-dimensional solvmanifolds}

In \cite[Theorem 1]{hasegawa-jsg}, K. Hasegawa characterized the compact complex surfaces being diffeomorphic to $4$-dimensional solvmanifolds. More precisely, such complex structures turn out to be left-invariant, and six different cases may occur: complex torus, hyperelliptic surface, Inoue surface of type $\mathcal{S}_{M}$, primary Kodaira surface, secondary Kodaira surface, Inoue surface of type $\mathcal{S}^{\pm}$ (see Table \ref{table:eq-struttura-4-solvmfds}). In \cite{angella-dloussky-tomassini}, some results concerning their cohomologies are studied.

Now, we explitly study the existence of \lcht\ structures for such $4$-dimensional solvmanifolds.
For a more general result, see \cite[Theorem 1.1]{apostolov-dloussky} by V. Apostolov and G. Dloussky, proving that any compact complex surface with odd first Betti number admits a \lcht\ structure.
(See, e.g., \cite{belgun, brunella} for further results on \lcK\ metrics for compact complex surfaces.)
The following result provides examples of compact complex surfaces yielding a positive answer to Question \ref{conj:nilmfd-lcht--lck}. This is in accord with \cite[Theorem 7]{belgun}. In view of the recent results by V. Apostolov and G. Dloussky, this result follows from \cite[Theorem 1.1]{apostolov-dloussky}.

\begin{thm}[{see also \cite[Theorem 7]{belgun} and \cite[Theorem 1.1]{apostolov-dloussky}}]\label{thm:4solvmfds-lcht}
 Let $X$ be a compact complex surface diffeomorphic to a solvmanifold.
 Then $X$ admits \lcht\ structures.
 Except in the case of Inoue surface of type $\mathcal{S}^\pm$ with $q\neq0$, then $X$ admits also \lcK\ structures.
\end{thm}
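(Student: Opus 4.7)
The plan is to combine three observations: (a) any \lcK\ structure is automatically \lcht, since the compatibility condition $\Omega \in \wedge^{1,1}X$ is stronger than the taming condition; (b) Hasegawa's classification \cite{hasegawa-jsg} splits the compact complex surfaces diffeomorphic to solvmanifolds into six families (complex tori, hyperelliptic surfaces, Inoue surfaces of type $\mathcal{S}_M$, primary Kodaira surfaces, secondary Kodaira surfaces, Inoue surfaces of type $\mathcal{S}^\pm$); and (c) for every one of these families \emph{except} the Inoue surfaces of type $\mathcal{S}^\pm$ with $q \neq 0$, a \lcK\ metric is known to exist (explicit K\"ahler metrics in the K\"ahler families, Tricerri-type metrics on Inoue $\mathcal{S}_M$ and on $\mathcal{S}^\pm$ with $q=0$, see \cite{belgun}). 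Combining (a) and (c), the only substantive case remaining is that of Inoue surfaces of type $\mathcal{S}^\pm$ with $q\neq0$, and for this case I would also recall that by \cite[Theorem 7]{belgun} there is no \lcK\ structure at all, which explains the exception in the statement.

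For the remaining case, I would work at the Lie-algebra level of ${\mathrm{Sol}'}^4_1$ using the left-invariant coframe appearing in Table \ref{table:eq-struttura-4-solvmfds}. Writing the most general $G$-left-invariant real $2$-form $\Omega$ in terms of this coframe with complex parameters, I would impose the two conditions defining a \lcht\ structure: (i) the $(1,1)$-component of $\Omega$ with respect to $J$ is positive, which translates into finitely many explicit positivity inequalities on the real and complex parameters in the spirit of \cite[page 189]{ugarte}; and (ii) there exists a $\de$-closed left-invariant $1$-form $\vartheta$ with $\de\Omega=\vartheta\wedge\Omega$, which upon expansion becomes a finite linear system in the coefficients of $\Omega$ and $\vartheta$. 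One can pass from a genuine \lcht\ structure on $X$ to a left-invariant one via the F.~A. Belgun symmetrization \cite[Theorem 7]{belgun} combined with A. Hattori's isomorphism \cite[Corollary 4.2]{hattori} (analogous to Lemma \ref{lemma:nilmfd-lcht--inv-lcht}), so working at the Lie-algebra level is no loss of generality for the existence question.

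The main point to check is that, in contrast with the \lcK\ case (which uses the stronger constraint $\de\omega=\vartheta\wedge\omega$ on the $(1,1)$-form $\omega$ and is obstructed when $q\neq0$ by \cite[Theorem 7]{belgun}), the weaker taming requirement provides enough free parameters in $\Omega$, including $(2,0)$ and $(0,2)$ components, to solve the system (i)--(ii) simultaneously with a non-zero Lee form. The hard part is purely computational: one must write down the $\de_\vartheta\Omega=0$ equations coefficient-by-coefficient in the fixed coframe and exhibit an explicit solution meeting the positivity conditions. Once this is done, the theorem follows, since the combination of (a)--(c) takes care of every other case in Hasegawa's list.
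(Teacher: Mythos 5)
Your strategy is sound and genuinely more economical than the paper's: the paper runs one uniform explicit computation through all six families of Hasegawa's list, exhibiting in each admissible case both the \lcK\ solutions and the strictly larger family of \lcht\ solutions with nonzero $(2,0)+(0,2)$ part, whereas you dispose of five of the six families by combining the known existence of \lcK\ metrics with the (correct) observation that \lcK\ implies \lcht. This correctly isolates the only genuinely new content of the theorem, namely the existence of \lcht\ structures on the Inoue surfaces of type $\mathcal{S}^\pm$ with $q\neq0$. What each approach buys: yours minimizes computation at the cost of importing \cite{belgun}; the paper's uniform treatment additionally records, for every family, the full set of linear \lcht\ structures and makes the contrast with the \lcK\ equations visible case by case.

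Two points need attention. First, for the reduction to left-invariant structures you invoke A. Hattori's theorem, but that applies to completely-solvable solvmanifolds, and ${\mathrm{Sol}'}^4_1$ (as well as the groups underlying the hyperelliptic and secondary Kodaira surfaces) is not completely solvable; the correct input is that $H^1(\mathfrak{g})\to H^1_{dR}(X;\R)$ is an isomorphism for all six families, see \cite[Theorem 4.1]{angella-dloussky-tomassini}, after which Lemma \ref{lemma:nilmfd-lcht--inv-lcht} applies. Second, you leave the decisive computation for $\mathcal{S}^\pm$ with $q\neq0$ as a promissory note; since this is the one case not covered by your reduction, the proof is incomplete without it. It does go through exactly as you predict: writing $\Omega = \im\,A\,\varphi^{1\bar1}+\im\,B\,\varphi^{2\bar2}+D\,\varphi^{1\bar2}-\bar D\,\varphi^{2\bar1}+L\,\varphi^{12}+\bar L\,\varphi^{\bar1\bar2}$ and $\vartheta=b\,\varphi^2+\bar b\,\bar\varphi^2$ with $\Re b=0$, the system $\de_\vartheta\Omega=0$ reduces to $-\im\,A\,b=\tfrac{A}{2}$ and $-(D+L)\,b=\tfrac12\left(\im\,(L+\bar D)+A\,q\right)$, whence $b=\tfrac{\im}{2}$ and $L=-\Re D+\tfrac{\im}{2}\,A\,q$; any $A>0$, $B>0$, $AB>|D|^2$ then gives an admissible \lcht\ structure, while the \lcK\ constraint $L=0$ would force $A\,q=-2\,\im\,\Re D$ and hence $A=0$ when $q\neq0$, recovering Belgun's obstruction.
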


\begin{table}[ht]
 \centering
\begin{tabular}{c || >{$}l<{$} || >{$}l<{$} | >{$}l<{$} ||}
\toprule
 {\bfseries class} & \text{\bfseries conditions} & \de\varphi^1 & \de\varphi^2 \\
\toprule\midrule[0.02em]
{\bfseries complex torus} & & 0 & 0 \\
\midrule[0.02em]
{\bfseries hyperelliptic surface} & & -\frac{1}{2}\,\varphi^{12}+\frac{1}{2}\,\varphi^{1\bar2} & 0 \\
\midrule[0.02em]
{\bfseries Inoue surface $\mathcal{S}_{M}$} & \alpha\in\R\setminus\{0\},\beta\in\R & \frac{\alpha-\im\beta}{2\im}\,\varphi^{12}-\frac{\alpha-\im\beta}{2\im}\,\varphi^{1\bar2} & -\im\,\alpha\,\varphi^{2\bar2} \\
\midrule[0.02em]
{\bfseries primary Kodaira surface} & & 0 & \frac{\im}{2}\,\varphi^{1\bar1} \\
\midrule[0.02em]
{\bfseries secondary Kodaira surface} & & -\frac{1}{2}\,\varphi^{12}+\frac{1}{2}\,\varphi^{1\bar2} & \frac{\im}{2}\,\varphi^{1\bar1} \\
\midrule[0.02em]
{\bfseries Inoue surface $\mathcal{S}^{\pm}$} & q\in\R & \frac{1}{2\im}\,\varphi^{12}+\frac{1}{2\im}\,\varphi^{2\bar1}+\frac{q\im}{2}\,\varphi^{2\bar2} & \frac{1}{2\im}\,\varphi^{2\bar2} \\
\midrule[0.02em]
\bottomrule
\end{tabular}
\caption{Structure equations for the compact complex surfaces being diffeomorphic to $4$-dimensional solvmanifolds, with respect to a left-invariant co-frame $\left\{\varphi^1,\varphi^2\right\}$ of $(1,0)$-forms, as classified by K. Hasegawa in \cite{hasegawa-jsg}.}
\label{table:eq-struttura-4-solvmfds}
\end{table}

\begin{proof}
 Consider a left-invariant co-frame $\left\{\varphi^1, \varphi^2\right\}$ of $(1,0)$-forms. We recall in Table \ref{table:eq-struttura-4-solvmfds} the structure equations with respect to such co-frame, as in \cite[Theorem 1]{hasegawa-jsg}. Denote by $\g$ the associated Lie algebra. Since the natural map $H^1(\mathfrak{g}) \to H^1_{dR}(X;\R)$ is always an isomorphism, see, e.g., \cite[Theorem 4.1]{angella-dloussky-tomassini}, we can use Lemma \ref{lemma:nilmfd-lcht--inv-lcht} in order to reduce the computations to left-invariant \lcht\ structures. More precisely, consider the left-invariant real $2$-form
 $$ \Omega \;=\; \im\, A\, \varphi^{1\bar1} + \im\, B\, \varphi^{2\bar2} + \left( D\, \varphi^{1\bar2} - \bar D\, \varphi^{2\bar1} \right) + \left( L\, \varphi^{12} + \bar L\, \varphi^{\bar1\bar2} \right) \;, $$
 where $A,B\in\R$ and $D,L\in\C$. (As a matter of notation, we shorten, e.g., $\varphi^{1\bar2} := \varphi^1 \wedge \bar\varphi^2$.) The condition that $\Omega$ tames the complex structure is shown to be equivalent to
 $$ \left\{\begin{array}{l}
            A > 0 \\[5pt]
            B > 0 \\[5pt]
            AB > |D|^2
    \end{array}\right. \;.
 $$
 Consider the left-invariant real $1$-form
 $$ \vartheta \;=\; a\, \varphi^1 + b\, \varphi^2 + \bar a\, \bar\varphi^1 + \bar b\, \bar\varphi^2 \;. $$
 We compute
 \begin{eqnarray*}
  \vartheta \wedge \Omega &=& \left( - \im\, A\, b - \bar D\, a + L\, \bar a \right)\, \varphi^{12\bar1} + \left( \im\, B\, a - D\, b + L\, \bar b \right)\, \varphi^{12\bar2} \\[5pt]
  && + \left( \im\, A\, \bar b - D\, \bar a + \bar L\, a \right)\, \varphi^{1\bar1\bar2} + \left( - \im\, B\, \bar a - \bar D\, \bar b + \bar L\, b \right)\, \varphi^{2\bar1\bar2} \;.
 \end{eqnarray*}
 We study each case in the classification by \cite[Theorem 1]{hasegawa-jsg} separately.

\caso{Torus} The torus, which has structure equations
 $$ \de\varphi^1 \;=\; \de\varphi^2 \;=\; 0 \;, $$
 clearly admits a K\"ahler structure.

\caso{Hyperelliptic surface} The hyperelliptic surface is characterized by the structure equations
 $$ \de\varphi^1 \;=\; -\frac{1}{2}\, \varphi^{12} + \frac{1}{2}\, \varphi^{1\bar2} \;, \qquad \de\varphi^2 \;=\; 0 \;. $$
 It clearly admits a K\"ahler structure. More precisely, an explicit computation gives
 $$ \de \Omega \;=\; - \frac{1}{2}\, \left( D + L \right) \, \varphi^{12\bar2} - \frac{1}{2}\, \left( \bar D + \bar L \right)\, \varphi^{2\bar1\bar2} \;. $$
 We compute
 $$ \de\vartheta \;=\; -\frac{a}{2}\, \varphi^{12} + \frac{a}{2}\, \varphi^{1\bar2} - \frac{\bar a}{2}\, \varphi^{2\bar1} - \frac{\bar a}{2}\, \varphi^{\bar1\bar2} \;. $$
 In particular, the condition $\de\vartheta=0$ is equivalent to
 $$ a \;=\; 0 \;. $$
 The condition $\de_\vartheta\Omega=0$ with $\de\vartheta=0$ is equivalent to
 $$ \left\{ \begin{array}{l}
     - \im\, A\, b \;=\; 0 \\[5pt]
     - D\, b + L\, \bar b \;=\; - \frac{1}{2}\, \left( D + L \right)
    \end{array}\right. \;.
 $$
 In particular, it admits both \gcK\ structures, with
 $$ A > 0 , \quad B > 0 , \quad D = L = 0 \qquad \text{ and } \qquad a = b = 0 \;, $$
 and \gcht\ structures, with
 $$ A > 0 , \quad B > 0 , \quad D = - L , \quad AB > |D|^2 \qquad \text{ and } \qquad a = b = 0 \;. $$

\caso{Inoue surface $\mathcal{S}_{M}$} The Inoue surfaces of type $\mathcal{S}_{M}$ are characterized by the structure equations
 $$ \de\varphi^1 \;=\; \frac{\alpha-\im\beta}{2\im}\, \varphi^{12} - \frac{\alpha-\im\beta}{2\im}\, \varphi^{1\bar2} \;, \qquad \de\varphi^2 \;=\; -\im\alpha\, \varphi^{2\bar2} \;, $$
 where $\alpha\in\R\setminus\{0\}$ and $\beta\in\R$.
 An explicit computation gives
  $$ \de \Omega \;=\; \left( \alpha\, A \right)\, \varphi^{12\bar1} + \frac{D+L}{2}\, \left( -\beta + \im \alpha \right)\, \varphi^{12\bar2}  + \left( \alpha\, A  \right)\, \varphi^{1\bar1\bar2} - \frac{\bar D + \bar L}{2}\, \left( \beta + \im \alpha \right)\, \varphi^{2\bar1\bar2} \;. $$
 We compute
 $$ \de\vartheta \;=\; \frac{\alpha - \im \beta}{2\im}\, a\, \varphi^{12} - \frac{\alpha - \im \beta}{2\im}\, a\, \varphi^{1\bar2} - \frac{\alpha + \im \beta}{2\im}\, \bar a\, \varphi^{2\bar1} - 2\, \im\alpha\, \Re b\, \varphi^{2\bar2} - \frac{\alpha + \im \beta}{2\im}\, \bar a\, \varphi^{\bar1\bar2} \;. $$
 In particular, since $\alpha\in\R\setminus\{0\}$, the condition $\de\vartheta=0$ is equivalent to
 $$ a \;=\; \Re b \;=\; 0 \;. $$
 The condition $\de_\vartheta\Omega=0$ with $\de\vartheta=0$ is equivalent to
 $$ \left\{ \begin{array}{l}
     - \im\, A\, b \;=\; \alpha\, A \\[5pt]
     - \left( D + L \right)\, b \;=\; \frac{D+L}{2}\, \left( -\beta + \im\alpha \right)
    \end{array}\right. \;.
 $$
 Hence it admits both \lcK\ structures, with
 $$ A > 0 , \quad B > 0 , \quad D = 0 , \quad L = 0 , \qquad \text{ and } \qquad a = 0 , \quad b = \im\, \alpha \;, $$
 and \lcht\ structures, with
 $$ A > 0 , \quad B > 0 , \quad AB > |D|^2 , \quad L = - D , \qquad \text{ and } \qquad a = 0 , \quad b = \im\, \alpha \;. $$

\caso{Primary Kodaira surface} The primary Kodaira surface is characterized by the structure equations
 $$ \de\varphi^1 \;=\; 0 \;, \qquad \de\varphi^2 \;=\; \frac{\im}{2}\, \varphi^{1\bar1} \;. $$
 An explicit computation gives
 $$ \de \Omega \;=\; - \frac{B}{2}\, \varphi^{12\bar1} - \frac{B}{2}\, \varphi^{1\bar1\bar2} \;. $$
 We compute
 $$ \de\vartheta \;=\; \im\, \Re b\, \varphi^{1\bar1} \;. $$
 In particular, the condition $\de\vartheta=0$ is equivalent to
 $$ \Re b \;=\; 0 \;. $$
 The condition $\de_\vartheta\Omega=0$ with $\de\vartheta=0$ is equivalent to
 $$ \left\{ \begin{array}{l}
     - \im\, A\, b - \bar D\, a + L\, \bar a \;=\; - \frac{B}{2} \\[5pt]
     \im\, B\, a - \left( D + L \right)\, b \;=\; 0
    \end{array}\right. \;.
 $$
 In particular, it admits both \lcK\ structures, with
 $$ A > 0 , \quad B > 0 , \quad AB > |D|^2 , \quad L = 0 , \qquad \text{ and } \qquad a = - \frac{BD}{2\left(AB-|D|^2\right)} , \quad b = - \frac{\im\,B^2}{2\left(AB-|D|^2\right)} \;, $$
 and \lcht\ structures, with
 $$ A > 0 , \quad B > 0 , \quad AB > |D|^2 , \quad L \in \C , \qquad \text{ and } \qquad a = - \frac{B\left(D+L\right)}{2\left(AB-|D|^2+|L|^2\right)} , \quad b = - \frac{\im\,B^2}{2\left(AB-|D|^2+|L|^2\right)} \;. $$

\caso{Secondary Kodaira surface} The secondary Kodaira surface is characterized by the structure equations
 $$ \de\varphi^1 \;=\; -\frac{1}{2}\, \varphi^{12} + \frac{1}{2}\, \varphi^{1\bar2} \;, \qquad \de\varphi^2 \;=\; \frac{\im}{2}\, \varphi^{1\bar1} \;. $$
 An explicit computation gives
 $$ \de \Omega \;=\; - \frac{B}{2}\, \varphi^{12\bar1} - \frac{D + L}{2}\, \varphi^{12\bar2} - \frac{B}{2}\, \varphi^{1\bar1\bar2} - \frac{\bar D + \bar L}{2}\, \varphi^{2\bar1\bar2} \;. $$
 We compute
 $$ \de\vartheta \;=\; - \frac{a}{2}\, \varphi^{12} + \im\, \Re b\, \varphi^{1\bar1} + \frac{a}{2}\, \varphi^{1\bar2} - \frac{\bar a}{2}\, \varphi^{2\bar1} - \frac{\bar a}{2}\, \varphi^{\bar1\bar2} \;. $$
 In particular, the condition $\de\vartheta=0$ is equivalent to
 $$ a \;=\; \Re b \;=\; 0 \;. $$
 The condition $\de_\vartheta\Omega=0$ with $\de\vartheta=0$ is equivalent to
 $$ \left\{ \begin{array}{l}
     - \im\, A\, b \;=\; - \frac{B}{2} \\[5pt]
     - \left( D + L \right) \, b \;=\; - \frac{D + L}{2}
    \end{array}\right. \;.
 $$
 In particular, it admits both \lcK\ structures, with
 $$ A > 0 , \quad B > 0 , \quad D = L = 0 , \qquad \text{ and } \qquad a = 0 , \quad b = - \frac{B\, \im}{2\, A} \;, $$
 and \lcht\ structures, with
 $$ A > 0 , \quad B > 0 , \quad \quad AB > |D|^2 , \quad L = - D , \qquad \text{ and } \qquad a = 0 , \quad b = - \frac{B\, \im}{2\, A} \;. $$

\caso{Inoue surface $\mathcal{S}^{\pm}$} The Inoue surface of type $\mathcal{S}^{\pm}$ is characterized by the structure equations
 $$ \de\varphi^1 \;=\; \frac{1}{2\im}\, \varphi^{12}+\frac{1}{2\im}\, \varphi^{2\bar1}+\frac{q\im}{2}\,\varphi^{2\bar2} \;, \qquad \de\varphi^2 \;=\; \frac{1}{2\im}\, \varphi^{2\bar2} \;, $$
 where $q\in\R$.
 An explicit computation gives
 $$ \de \Omega \;=\; \frac{A}{2}\, \varphi^{12\bar1} + \frac{1}{2}\, \left( \im\, \left( L + \bar D \right) + A\, q \right)\, \varphi^{12\bar2} + \frac{A}{2}\, \varphi^{1\bar1\bar2} + \frac{1}{2}\, \left( - \im\, \left( \bar L + D \right) + A\, q \right)\, \varphi^{2\bar1\bar2} \;. $$
 We compute
 $$ \de\vartheta \;=\; \frac{a}{2\im}\, \varphi^{12} + \frac{\bar a}{2\im}\, \varphi^{1\bar2} + \frac{a}{2\im}\, \varphi^{2\bar1} + \im\, \left( q\,\Re a - \Re b\right)\, \varphi^{2\bar2} - \frac{\bar a}{2\im}\, \varphi^{\bar1\bar2} \;. $$
 In particular, the condition $\de\vartheta=0$ is equivalent to
 $$ a \;=\; \Re b \;=\; 0 \;. $$
 The condition $\de_\vartheta\Omega=0$ with $\de\vartheta=0$ is equivalent to
 $$ \left\{ \begin{array}{l}
     - \im\, A\, b \;=\; \frac{A}{2} \\[5pt]
     - \left( D + L \right)\, b \;=\; \frac{1}{2}\, \left( \im\, \left( L + \bar D \right) + A\, q \right)
    \end{array}\right. \;.
 $$
 In particular:
 \begin{itemize}
  \item in the case $q=0$:
  it admits both \lcK\ structures, with
  $$ A > 0 , \quad B > 0 , \quad AB > |D|^2 , \quad \Re D = 0 , \quad L = 0 , \qquad \text{ and } \qquad a = 0 , \quad b = \frac{\im}{2} \;, $$
  and \lcht\ structures, with
  $$ A > 0 , \quad B > 0 , \quad AB > |D|^2 , \quad L = - \Re D, \qquad \text{ and } \qquad a = 0 , \quad b = \frac{\im}{2} \;; $$
  \item in the case $q\neq0$:
  it does not admit any \lcK\ structure, (the equation $Aq=-2\im\Re D$ yielding $A=0$ that is not admissible,)
  but it admits \lcht\ structures, with
  $$ A > 0 , \quad B > 0 , \quad AB > |D|^2 , \quad L = - \Re D + \frac{\im}{2} A q , \qquad \text{ and } \qquad a = 0 , \quad b = \frac{\im}{2} \;. $$
 \end{itemize}

This concludes the proof.
\end{proof}

\subsection{Locally conformal holomorphic-tamed structures on
\texorpdfstring{$6$}{6}-dimensional solvmanifolds with invariant
complex structures with holomorphically trivial canonical bundle}

In the direction of Question~\ref{conj:nilmfd-lcht--lck}, we investigate the class of
$6$-dimensional solvmanifolds obtained in \cite{fino-otal-ugarte}, see also \cite{otal-phd}.
More precisely, the $6$-dimensional unimodular solvable Lie algebras admitting a linear complex structure and a non-vanishing $\de$-closed $(3,0)$-form are classified up to isomorphisms in \cite[Theorem 2.8]{fino-otal-ugarte}, and the moduli of left-invariant complex structures on the corresponding solvmanifolds are classified in \cite[Theorem 3.10]{fino-otal-ugarte}. In a sense, such complex structures are a very first generalization of left-invariant complex structures on nilmanifolds. Furthermore, they provide interesting examples of solvmanifolds satisfying the $\del\delbar$-Lemma.

These complex structures on Lie algebras are divided into seven classes. By considering a basis $\left\{\omega^{1}, \omega^{2}, \omega^3\right\}$ of the $(1,0)$-forms, we recall in Table \ref{table:otal-solvmfds} the associated structure equations. (As a matter of notation, here and in the following, we shorten, e.g., $\omega^{1\bar2}:=\omega^1\wedge\bar\omega^2$.) Compare also \cite[Proposition 3.3, Proposition 3.4, Proposition 3.6, Proposition 3.7, Proposition 3.9, Theorem 3.10]{fino-otal-ugarte}.

\begin{table}[ht]
 \centering
\resizebox{\textwidth}{!}{
\begin{tabular}{c || >{$}l<{$} || >{$}l<{$} | >{$}l<{$} | >{$}l<{$} ||}
\toprule
 {\bfseries class} & \text{\bfseries conditions} & \de\omega^1 & \de\omega^2 & \de\omega^3 \\
\toprule\midrule[0.02em]
\multirow{2}{*}{\bfseries class (1)} & A=\cos\theta+\im\sin\theta, & \multirow{2}{*}{$A\,\omega^{13}+A\,\omega^{1\bar3}$} & \multirow{2}{*}{$-A\,\omega^{23}-A\,\omega^{2\bar3}$} & \multirow{2}{*}{$0$} \\
& \theta\in[0,\pi) & & & \\
\midrule[0.02em]
{\bfseries class (2)} & g>0 & 0 & -\frac{1}{2}\,\omega^{13}-\left(\frac{1}{2}+g\,\im\right)\,\omega^{1\bar3}+g\,\im\,\omega^{3\bar1} & \frac{1}{2}\,\omega^{12}+\left(\frac{1}{2}-\frac{\im}{4\,g}\right)\,\omega^{1\bar2}+\frac{\im}{4\,g}\,\omega^{2\bar1} \\
\midrule[0.02em]
\multirow{3}{*}{\bfseries class (3)} & A\in\C,\sigma_{12}\in\C, \sigma_{11}\in\R,\sigma_{22}\in\R, & \multirow{2}{*}{$A\,\omega^{13}+A\,\omega^{1\bar3}$} & \multirow{2}{*}{$-A\,\omega^{23}-A\,\omega^{2\bar3}$} & \multirow{2}{*}{$\sigma_{11}\,\omega^{1\bar1}+\sigma_{12}\,\omega^{1\bar2}+\bar\sigma_{12}\,\omega^{2\bar1}+\sigma_{22}\,\omega^{2\bar2}$} \\
& \Re A \sigma_{11}=0, \Re A \sigma_{22}=0, \Im A \sigma_{12}=0, & & & \\
& |A|=1, \left( \sigma_{11}, \sigma_{22}, \sigma_{12} \right) \neq (0,0,0) & & & \\
\midrule[0.02em]
{\bfseries class (4)} & \Im A \neq 0 & -(A-\im)\,\omega^{13} - (A+\im)\, \omega^{1\bar3} & (A-\im)\, \omega^{23}+(A+\im)\,\omega^{2\bar3} & 0 \\
\midrule[0.02em]
{\bfseries class (5)} & \varepsilon = 0 & \multirow{2}{*}{$2\,\im\,\omega^{13}+\omega^{3\bar3}$} & \multirow{2}{*}{$-2\im\,\omega^{23}+\varepsilon\,\omega^{3\bar3}$} & \multirow{2}{*}{$0$} \\
\text{\bfseries class (6)} & \varepsilon = 1 & & & \\
\midrule[0.02em]
{\bfseries class (7)} & & -\omega^{3\bar3} & -\frac{\im}{2}\,\omega^{2\bar1}+\frac{1}{2}\,\omega^{1\bar3}+\frac{\im}{2}\,\omega^{12} & \frac{\im}{2}\,\omega^{3\bar1}-\frac{\im}{2}\,\omega^{13} \\
\midrule[0.02em]
\bottomrule
\end{tabular}
}
\caption{Structure equations for the seven classes of linear complex structures on $6$-dimensional solvable Lie algebras admitting a non-vanishing $\de$-closed $(3,0)$-form, with respect to a co-frame $\left\{\omega^1, \omega^2, \omega^3\right\}$ of $(1,0)$-forms,
as classified in \cite{fino-otal-ugarte}, see also \cite{otal-phd}.}
\label{table:otal-solvmfds}
\end{table}

\begin{table}[ht]
 \centering
\resizebox{\textwidth}{!}{
\begin{tabular}{>{$\mathbf\bgroup}l<{\mathbf\egroup$} || >{$}l<{$} || >{$}l<{$} | >{$}l<{$} | >{$}l<{$} | >{$}l<{$} | >{$}l<{$} | >{$}l<{$} ||}
\toprule
\omega^{jhk} & \vartheta\wedge\Omega & \multicolumn{6}{c||}{$\de\Omega$} \\
 & & \text{class (1)} & \text{class (2)} & \text{class (3)} & \text{class (4)} & \text{classes (5) and (6)} & \text{class (7)} \\
\toprule\midrule[0.02em]
\text{conditions} & & A=\cos\theta+\im\sin\theta, & g>0 & A\in\C,\sigma_{12}\in\C, \sigma_{11}\in\R,\sigma_{22}\in\R, & \Im A\neq0 & \varepsilon \in \{0,1\} & \\
 & & \theta\in[0,\pi) & & \Re A \sigma_{11}=0, \Re A \sigma_{22}=0, \Im A \sigma_{12}=0, & & & \\
 & & & & |A|=1, \left( \sigma_{11}, \sigma_{22}, \sigma_{12} \right) \neq (0,0,0) & & & \\
\hline
\de\theta=0 & & a=0, b=0 & b=0, c=0 & a=0, b=0, c=\bar c & a=0, b=0 & a=0, b=0 & a=\bar a, b=0, c=0 \\
\toprule
\midrule[0.02em]
123 & cL+aN-bM & 0 & 0 & 0 & 0 & 0 & 0 \\
\midrule[0.02em]\midrule[0.02em]
12\bar1 & -a\bar u+\bar aL-\im br^2 & 0 & -\frac{\im}{4g}M+\frac{2g+\im}{4g}z-\frac{1}{2}\bar z & -\bar\sigma_{12}M+\sigma_{11}N+\bar\sigma_{12}z-\sigma_{11}v & 0 & 0 & \frac{\im}{2}L+\frac{\im}{2}\bar u \\
12\bar2 & \im as^2+\bar b L-bu & 0 & \frac{2g-\im}{4g}N+\frac{\im}{4g}v-\frac{1}{2}\bar v & -\sigma_{22}M+\sigma_{12}N+\sigma_{22}z-\sigma_{12}v & 0 & 0 & 0 \\
12\bar3 & \bar cL+av-bz & 0 & -gs^2+\frac{\im}{2}t^2 & 0 & 0 & 0 & \im v \\
\midrule[0.02em]
13\bar1 & \bar aM-a\bar z-\im cr^2 & 2\Re A\im r^2 & -\im gL-\left(\frac{1}{2}-\im g\right)u+\frac{1}{2}\bar u & 2\Re A\im r^2-\sigma_{11} \im t^2 & -2(\Re A - \im)\im r^2 & -2 r^2 & -\frac{\im}{2}M+\frac{1}{2}u+\frac{\im}{2}\bar z \\
13\bar2 & -a\bar v+\bar bM-cu & 2\Im A \im u & -\frac{\im}{2}s^2-\frac{1}{4g}t^2 & 2\Im A \im u-\sigma_{12}\im t^2 & -2\Im A\im u & 2\im u & \im\bar v \\
13\bar3 & \bar cM+\im at^2-cz & -MA+zA & \left(\frac{1}{2}+\im g\right)N-\frac{1}{2}v-\im g\bar v & -MA+zA & (A-\im)M-(A-\im)z & -\varepsilon L+\im r^2+\varepsilon u+2\im z & -\frac{1}{2}N-\im r^2 \\
\midrule[0.02em]
23\bar1 & \bar aN-b\bar z +c\bar u & 2\Im A \im \bar u & -\left(\frac{1}{2}-\im g\right)\im s^2-\frac{2g+\im}{4g}\im t^2 & 2\Im A \im \bar u-\bar\sigma_{12}\im t^2 & -2\Im A \im\bar u & 2\im\bar u & \frac{\im}{2}s^2 \\
23\bar2 & \bar b N - b\bar v -\im cs^2 & -2\Re A\im s^2 & 0 & -2\Re A\im s^2-\sigma_{22}\im t^2 & 2(\Re A-\im)\im s^2 & 2s^2 & 0 \\
23\bar3 & \bar c N + \im bt^2-cv & NA-vA & 0 & NA-vA & -(A+\im)N+(A-\im)v & L-\bar u +\varepsilon\im s^2 & -L+\bar u \\
\midrule[0.02em]\midrule[0.02em]
1\bar1\bar2 & \im\bar b r^2+a\bar L-\bar a u & 0 & -\frac{1}{2}z+\frac{2g-\im}{4g}\bar z+\frac{\im}{4g}\bar u & \sigma_{12}\bar z-\sigma_{11}\bar v-\sigma_{12}\bar M+\sigma_{11}\bar N & 0 & 0 & \frac{\im}{2}u-\frac{\im}{2}\bar L \\
1\bar1\bar3 & a\bar M-\bar a z+\im\bar c r^2 & -2\Re A\im r^2 & \frac{1}{2}u-\left(\frac{1}{2}+\im g\right)\bar u+\im g\bar L & -2\Re A\im r^2+\sigma_{11}\im t^2 & 2(\Re A+\im)\im r^2 & -2r^2 & -\frac{\im}{2}z+\frac{1}{2}\bar u+\frac{\im}{2}\bar M \\
1\bar2\bar3 & \bar b z -\bar c u - a\bar N & -2\Im A\im u & \left(\frac{1}{2}+\im g\right)\im s^2+\frac{2g-\im}{4g}\im t^2 & -2\Im A\im u +\sigma_{12}\im t^2 & 2\Im A\im u & -2\im u & -\frac{\im}{2}s^2 \\
\midrule[0.02em]
2\bar1\bar2 & -\bar b \bar u + b\bar L -\im\bar a s^2 & 0 & -\frac{1}{2}v-\frac{\im}{4g}\bar v+\frac{2g+\im}{4g}\bar N & \sigma_{22}\bar z-\bar\sigma_{12}\bar v-\sigma_{22}\bar M+\bar\sigma_{12}\bar N & 2\Im A\im \bar u & 0 & 0 \\
2\bar1\bar3 & -\bar c\bar u +b\bar M-\bar a v & -2\Im A\im \bar u & \frac{\im}{2}s^2-\frac{1}{4g}t^2 & -2\Im A\im \bar u+\bar\sigma_{12}\im t^2 & 0 & -2\im \bar u & -\im v \\
2\bar2\bar3 & \im \bar cs^2-\bar bv+b\bar N & 2\Re A\im s^2 & 0 & 2\Re A\im s^2+\sigma_{22}\im t^2 & -2(\Re A+\im)\im s^2 & 2s^2-2\im v & 0 \\
\midrule[0.02em]
3\bar1\bar2 & -\bar b \bar z+\bar a \bar v +c\bar L & 0 & -gs^2-\frac{\im}{2}t^2 & 0 & 0 & 0 & -\im \bar v \\
3\bar1\bar3 & c\bar M -\bar c\bar z -\im\bar a t^2 & \bar A \bar z -\bar A\bar M & \im g v-\frac{1}{2}\bar v+\left(\frac{1}{2}-\im g\right)\bar N & \bar A \bar z - \bar A\bar M & -(\bar A+\im)\bar z + (\bar A -\im) \bar M & -\im r^2+\varepsilon \bar u-2\im\bar z-\varepsilon\bar L & \im r^2-\frac{1}{2}\bar N \\
3\bar2\bar3 & \im \bar b t^2 +\bar c \bar v -c\bar N & -\bar A\bar v+\bar A\bar N & 0 & -\bar A\bar v+\bar A\bar N & (\bar A+\im)\bar v -(\bar A-\im) \bar N & -u-\varepsilon\im s^2+2\im\bar v+\bar M & u-\bar L \\
\midrule[0.02em]\midrule[0.02em]
\bar1\bar2\bar3 & \bar b \bar M -\bar c \bar L -\bar a \bar N & 0 & 0 & 0 & 0 & 0 & 0 \\
\midrule[0.02em]
\bottomrule
\end{tabular}
}
\caption{Components of $\vartheta\wedge\Omega$ and $\de\Omega$ for the linear complex structures on $6$-dimensional solvable Lie algebras admitting a non-vanishing $\de$-closed $(3,0)$-form as studied in \cite{fino-otal-ugarte}, see also \cite{otal-phd}. (See proof of Theorem \ref{thm:solmfd-otal-lcht-lck} for notations.)}
\label{table:otal-solvmfds-dOmega}
\end{table}

In \cite{fino-otal-ugarte,otal-phd}, some results on Hermitian metrics and cohomological properties of the above solvmanifolds with left-invariant complex structures are studied. We prove here the following result, on the existence of \lcht\ and \lcK\ structures.

\begin{thm}\label{thm:solmfd-otal-lcht-lck}
 Consider the left-invariant complex structures on $6$-dimensional solvmanifolds with holomorphically trivial canonical bundle
 as classified in \cite{fino-otal-ugarte}, see also \cite{otal-phd}. According to that classification, they are divided into seven classes, see Table \ref{table:otal-solvmfds}.
 \begin{itemize}
  \item The complex structures in class (1) admit a linear \lcht\ structure if and only if they admit a linear \lcK\ structure if and only if $A=\im$.
  \item The complex structures in class (3) admit a linear \lcht\ structure if and only if they admit a linear \lcK\ structure if and only if $A\in\{\im,-\im\}$.
  \item The complex structures in classes (2), (4), (5), (6), (7) do not admit either any linear \lcht\ structure or any linear \lcK\ structure.
 \end{itemize}
\end{thm}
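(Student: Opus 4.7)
My plan is to read off the result directly from Table~\ref{table:otal-solvmfds-dOmega}, which is set up precisely to encode the equations $\de_\vartheta\Omega=0$ for a generic left-invariant $2$-form. Concretely, for each class I take the generic $G$-left-invariant $2$-form
\[
 \Omega \;=\; L\,\omega^{12}+M\,\omega^{13}+N\,\omega^{23} + \im\, r^2\omega^{1\bar1}+\im\, s^2\omega^{2\bar2}+\im\, t^2\omega^{3\bar3} + u\,\omega^{1\bar2}-\bar u\,\omega^{2\bar1}+z\,\omega^{1\bar3}-\bar z\,\omega^{3\bar1}+v\,\omega^{2\bar3}-\bar v\,\omega^{3\bar2} + \overline{L\omega^{12}+M\omega^{13}+N\omega^{23}}
\]
with $r,s,t\in\R\setminus\{0\}$, $u,v,z\in\C$ and $L,M,N\in\C$, imposing on the $(1,1)$-part of $\Omega$ the principal-minors positivity conditions as in \cite[page 189]{ugarte} to guarantee that $\Omega$ tames the complex structure. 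Similarly I take $\vartheta=a\,\omega^1+b\,\omega^2+c\,\omega^3+\bar a\,\bar\omega^1+\bar b\,\bar\omega^2+\bar c\,\bar\omega^3$, and I impose $\de\vartheta=0$, which by direct inspection of the structure equations in Table~\ref{table:otal-solvmfds} reduces to the constraints on $a,b,c$ reported in the second row of Table~\ref{table:otal-solvmfds-dOmega}.

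With this setup, the condition $\de\Omega=\vartheta\wedge\Omega$ becomes a finite linear/polynomial system in the coefficients of $\Omega$, read off column-by-column from Table~\ref{table:otal-solvmfds-dOmega}: for each basis element $\omega^{jhk}$ one equates the corresponding entries in the column $\vartheta\wedge\Omega$ and in the column of the appropriate class. I then analyze each of the seven classes separately.

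For classes (2), (4), (5), (6), (7), I expect one of the purely imaginary coefficients (typically $\pm r^2$ or $\pm s^2$ or a combination involving $t^2$) to force an impossibility; for instance in class~(5)--(6) the equation from $\omega^{13\bar1}$ immediately reads $-2r^2 = -\im cr^2 + \bar a M - a \bar z$ whose imaginary part, after using the $\de\vartheta=0$ constraints, gives $r=0$, contradicting the tame hypothesis. Analogous obstructions appear at the $\omega^{23\bar2}$ entry in class~(4), at $\omega^{3\bar1\bar3}$ in class~(7), and at $\omega^{12\bar3}$ in class~(2). For classes (1) and (3), the rows $\omega^{13\bar1}$ and $\omega^{23\bar2}$ yield $2\,\Re A\cdot r^2 = 0$ and $-2\,\Re A\cdot s^2 = 0$, so a tame $\Omega$ can exist only if $\Re A = 0$, i.e.\ $A=\pm\im$ (and only $A=\im$ in class~(1), by the normalization $\theta\in[0,\pi)$). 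Conversely, plugging $A=\im$ (respectively $A\in\{\im,-\im\}$) into the remaining equations I expect to obtain a nonempty open set of solutions with $L=M=N=0$, hence pure type $(1,1)$, which means the resulting structures are in fact \lcK.

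The main obstacle is purely bookkeeping: ensuring that after imposing $\de\vartheta=0$ and $\de_\vartheta\Omega=0$ one has both a nonempty solution set and compatibility with the positive-definiteness constraints on $(r,s,t,u,v,z)$. For the ``no solution'' classes the subtlety is to check that no cancellation among the many coefficients can revive the obstruction found at a single row; I will handle this by taking the imaginary part of diagonal entries $\omega^{jh\bar j}$, which isolate $r^2,s^2,t^2$ cleanly. For classes (1) and (3), after fixing $A=\pm\im$ I will exhibit explicit solutions (the simplest being $\vartheta=0$ reduction, or $b=\im$-type choices as in the $4$-dimensional case of Theorem~\ref{thm:4solvmfds-lcht}) to close the equivalence with the \lcK\ statement and thereby establish Corollary~\ref{cor:solmfd-otal-lcht-lck-final}.
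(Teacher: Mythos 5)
Your overall strategy is exactly the paper's: take the generic left-invariant $2$-form $\Omega$ and $1$-form $\vartheta$, impose the taming inequalities of \cite[page 189]{ugarte} and $\de\vartheta=0$, and match the coefficients of $\de\Omega$ and $\vartheta\wedge\Omega$ row by row from Table \ref{table:otal-solvmfds-dOmega}, treating the seven classes separately. However, several of the concrete obstructions you point to are not where the contradictions actually live, and as written those steps would fail. For classes (5)--(6), the constraint $\de\vartheta=0$ only forces $a=b=0$; the coefficient $c$ remains an arbitrary complex number, so the imaginary part of the $\omega^{13\bar1}$ equation $-2r^2=-\im c\,r^2$ does \emph{not} give $r=0$ --- it gives $c=-2\im$, and the contradiction only appears when this is played against $\omega^{23\bar2}$, which gives $c=2\im$. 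For class (2), the row $\omega^{12\bar3}$ yields only $av=-gs^2+\tfrac{\im}{2}t^2$, which is solvable; the genuine obstruction requires a chain: $\omega^{13\bar2}$ forces $a\neq0$, then $\omega^{123}$ forces $N=0$, and then $\omega^{23\bar1}$ demands $\bar aN=-\left(\tfrac12-\im g\right)\im s^2-\tfrac{2g+\im}{4g}\im t^2$, whose imaginary part $-\tfrac12 s^2-\tfrac12 t^2$ cannot vanish. For class (7), the row $\omega^{3\bar1\bar3}$ is likewise solvable; the actual argument uses $\omega^{12\bar2}$ (giving $\im a s^2=0$, hence $a=0$) together with $\omega^{23\bar1}$ (giving $\tfrac{\im}{2}s^2=0$).

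A second, more substantive point concerns class (3): the diagonal rows there are $\bar aM-a\bar z-\im cr^2=2\Re A\,\im r^2-\sigma_{11}\im t^2$ and the analogue with $s^2,\sigma_{22}$, so they do \emph{not} directly yield $2\Re A\cdot r^2=0$; the terms $\sigma_{11}t^2$ and $\sigma_{22}t^2$ must be dealt with. One needs the parameter constraints $\Re A\,\sigma_{11}=\Re A\,\sigma_{22}=\Im A\,\sigma_{12}=0$, $|A|=1$, $(\sigma_{11},\sigma_{22},\sigma_{12})\neq(0,0,0)$ and a case split on whether $\Im A$ vanishes: if $\Im A\neq0$ then $\sigma_{12}=0$, so one of $\sigma_{11},\sigma_{22}$ is nonzero and $\Re A=0$ follows from the constraints (not from the table), while if $\Im A=0$ then $|A|=1$ forces $\sigma_{11}=\sigma_{22}=0$ and the two diagonal rows give the contradictory $c=\mp2\Re A$. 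Your positive direction for classes (1) and (3) (exhibiting solutions of pure type $(1,1)$ once $\Re A=0$, with $u=0$, $M=z$, $N=v$, so that the \lcht\ and \lcK\ existence questions coincide) matches the paper. In short: right method, but the single-row shortcuts you propose for classes (2), (3), (5)--(7) do not work and must be replaced by the slightly longer multi-row derivations above.
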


\begin{proof}
 As a matter of notation, consider the real $2$-form
 \begin{eqnarray*}
  \Omega &=& \left( \im\, r^2\, \omega^{1\bar1} + \im\, s^2\, \omega^{2\bar2} + \im\, t^2\, \omega^{3\bar3} \right) \\[5pt]
         && + \left( u\, \omega^{1\bar2} - \bar u\, \omega^{2\bar1} \right)
            + \left( v\, \omega^{2\bar3} - \bar v\, \omega^{3\bar2} \right)
            + \left( z\, \omega^{1\bar3} - \bar z\, \omega^{3\bar1} \right) \\[5pt]
         && + \left( L\, \omega^{12} + M\, \omega^{13} + N\, \omega^{23} \right)
            + \left( \bar L\, \omega^{\bar1\bar2} + \bar M\, \omega^{\bar1\bar3} + \bar N\, \omega^{\bar2\bar3} \right) \;,
 \end{eqnarray*}
 where $L,M,N\in\C$, and $r,s,t\in\R$ and $u,v,z\in\C$ satisfy, \cite[page 189]{ugarte},
 $$ r > 0 , \quad s > 0 , \quad t > 0 , \quad rs > |u|^2 , \quad st > |v|^2 , \quad rt > |z|^2 , \quad rst+2\Re(\im\bar u\bar v z)>t|u|^2+r|v|^2+s|z|^2 \;. $$
 Consider also
 $$ \vartheta \;=\; a\, \omega^{1} + b\, \omega^{2} + c\, \omega^{3}
                    + \bar a\, \omega^{\bar1} + \bar b\, \omega^{\bar 2} + \bar c\, \omega^{\bar 3} $$
 where $a,b,c \in \C$.

 We summarize the components of $\vartheta\wedge\Omega$ and of $\de\Omega$ for the each of the seven classes in Table \ref{table:otal-solvmfds-dOmega}. We recall that the existence of \lcht\ structures is equivalent to solve the equation $\de\Omega=\vartheta\wedge\Omega$ in $r,s,t,u,v,z,a,b,c$ satisfying the conditions above. We may further assume $\de\vartheta=0$. Hence we consider now these equations for each case separately.

 \caso{Class (1)} By matching the coefficients of $\omega^{13\bar1}$, we get $c=-2\Re A$. By matching the coefficients of $\omega^{23\bar2}$, we get $c=2\Re A$. Hence $\Re A=0$. In fact, $A=\im$. In this case, the coefficients of $\omega^{13\bar2}$, $\omega^{13\bar3}$, and $\omega^{23\bar3}$ give $u=0$, $M=z$, $N=v$, respectively, and the system reduces to these equations. In particular, for the complex structures with $A=\im$ in class (1), there exists both \lcht\ and \lcK\ structures.

 \caso{Class (2)} By matching the coefficients of $\omega^{13\bar2}$, we get $\frac{\im}{2}s^2+\frac{1}{4g}t^2=a\bar v$. Hence $a\neq0$. Therefore, by matching the coefficients of $\omega^{123}$, that is, $aN=0$, it follows that $N=0$. Consider the coefficients of $\omega^{23\bar1}$: we get $-\left(\frac{1}{2}-\im g\right)\im s^2-\frac{2g+\im}{4g}\im t^2=\bar aN=0$. Since $\Im \left(-\left(\frac{1}{2}-\im g\right)\im s^2-\frac{2g+\im}{4g}\im t^2\right)=-\frac{\im}{2}s^2-\frac{\im}{2}t^2\neq 0$, we get an absurd. Therefore there exists no \lcht\ structure for the complex structures in class (2).

 \caso{Class (3)} By the conditions on the parameters, two cases may occur.

 \begin{itemize}
  \item Assume $\Im A\neq 0$. Then $\sigma_{12}=0$. Since $\left( \sigma_{11}, \sigma_{12}, \sigma_{22} \right) \neq (0,0,0)$, then either $\sigma_{11}$ or $\sigma_{22}$ is non-zero. It follows that $\Re A=0$. In fact, $A\in\{\im,-\im\}$. By matching the coefficients of $\omega^{13\bar1}$, we get $c=\sigma_{11}\frac{t^2}{r^2}\in\R$. By matching the coefficients of $\omega^{23\bar2}$, we get $c=\sigma_{22}\frac{t^2}{s^2}\in\R$. By matching the coefficients of $\omega^{13\bar2}$, $\omega^{13\bar3}$, and $\omega^{23\bar3}$, we get, respectively, $u=0$, $z=M$, $v=N$. By matching the coefficients of $\omega^{123}$, we get $L=0$. The system reduces to these equations.
  \item Assume $\Im A=0$. Then $\sigma_{11}=0$ and $\sigma_{22}=0$. By matching the coefficients of $\omega^{13\bar1}$ and $\omega^{23\bar2}$, we get, respectively, $c=-2\Re A$ and $c=2\Re A$. Hence it follows $\Re A=0$. This is absurd, therefore there is no \lcht\ structure in this case.
 \end{itemize}

 \caso{Class (4)} By matching the coefficients of $\omega^{13\bar1}$ and $\omega^{23\bar2}$, we get, respectively, $c=2(\Re A-\im)$ and $c=-2(\Re A-\im)$. Hence it follows $\Re A-\im=0$. This is absurd, therefore there is no \lcht\ structure in this case.

 \caso{Classes (5) and (6)} By matching the coefficients of $\omega^{13\bar1}$ and $\omega^{23\bar2}$, we get, respectively, $c=-2\im$ and $c=2\im$. This is absurd, therefore there is no \lcht\ structure in this case.

 \caso{Class (7)} By matching the coefficients of $\omega^{12\bar2}$, we get $a\im s^2=0$. Hence $a=0$. But then, by matching the coefficients of $\omega^{23\bar1}$, we get $\frac{\im}{2}s^2=0$. This is absurd, therefore there is no \lcht\ structure in this case.

This concludes the proof.
\end{proof}

Summarizing, we have the following result, which provides a further class yielding a positive answer to Question \ref{conj:nilmfd-lcht--lck}, namely, invariant structures on the solvmanifolds studied in \cite{fino-otal-ugarte}, see also \cite{otal-phd}.

\begin{cor}\label{cor:solmfd-otal-lcht-lck-final}
Let $X$ be a $6$-dimensional solvmanifold endowed with a left-invariant
complex structure with holomorphically trivial canonical bundle.
Then, $X$ admits a linear \lcht\ structure if and only if it admits a linear \lcK\ structure.
\end{cor}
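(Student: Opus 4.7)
The plan is to deduce the corollary directly from the case-by-case analysis already carried out in Theorem \ref{thm:solmfd-otal-lcht-lck}. The strategy has two asymmetric halves: a trivial direction and a classification-based direction.

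For the easy direction, suppose $X$ admits a linear \lcK\ structure $\omega$. Then $\omega$ is a positive real $(1,1)$-form with $\de\omega=\vartheta\wedge\omega$ and $\de\vartheta=0$. In particular $\omega$ tames $J$ (since it is a positive $(1,1)$-form, so $\omega(v,Jv)>0$ for $v\neq 0$), and $\omega$ is non-degenerate because a positive $(1,1)$-form in complex dimension $n$ satisfies $\omega^n \neq 0$. Thus $\Omega:=\omega$ is itself a linear \lcht\ structure. This uses nothing more than the definitions.

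For the nontrivial direction, assume $X$ carries a linear \lcht\ structure. By the classification in \cite{fino-otal-ugarte}, the invariant complex structure with holomorphically trivial canonical bundle on $X$ falls into exactly one of the seven classes listed in Table \ref{table:otal-solvmfds}. The conclusion of Theorem \ref{thm:solmfd-otal-lcht-lck} tells us that linear \lcht\ structures exist only for: (i) complex structures in class (1) with $A=\im$, and (ii) complex structures in class (3) with $A\in\{\im,-\im\}$. In all other classes, no linear \lcht\ structure exists, so these cases are vacuous. In cases (i) and (ii), the same theorem asserts, symmetrically, the existence of linear \lcK\ structures. Hence a linear \lcK\ structure exists, completing the equivalence.

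The main (and only nontrivial) obstacle is thus absorbed into Theorem \ref{thm:solmfd-otal-lcht-lck}: the computation of the components of $\vartheta\wedge\Omega$ and $\de\Omega$ in each of the seven classes, carried out in Table \ref{table:otal-solvmfds-dOmega}, together with solving the resulting linear systems under the positivity constraints on the Hermitian parameters. Once that theorem is in hand, the corollary is immediate, since the list of complex structures admitting a linear \lcht\ structure coincides exactly with the list of those admitting a linear \lcK\ structure.
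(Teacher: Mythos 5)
Your proposal is correct and follows the same route as the paper: the corollary is obtained by simply reading off Theorem \ref{thm:solmfd-otal-lcht-lck}, whose case-by-case analysis shows that the set of complex structures (among the seven classes of \cite{fino-otal-ugarte}) admitting a linear \lcht\ structure coincides with the set admitting a linear \lcK\ structure, while the converse direction is immediate from the fact that a positive $\de_\vartheta$-closed $(1,1)$-form tames $J$ and is non-degenerate. The paper presents the corollary as a direct summary of the theorem without spelling out these two observations, so your write-up is, if anything, slightly more explicit but not different in substance.
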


\end{document}